\newcommand{\uno}{\mathbbmss{1}}
\newcommand{\gap}[2]{\widehat{\delta} \left(#1,#2 \right)}
\def\H{\mathcal{H}} 
\def\K{\mathcal{K}} 
\def\M{\mathcal{M}}
\def\N{\mathcal{N}}
\def\U{\mathcal{U}}
\def\V{\mathcal{V}}
\def\R{\mathcal{R}}
\def\A{\mathcal{A}}
\def\X{\mathcal{X}}
\def\Y{\mathcal{Y}}
\def\L{\mathcal{L}}
\def\S{\mathcal{S}}
\def\T{\mathcal{T}}
\def\OIP{$A^{(2)}_{\T, \S}$ }
\newtheorem{df}{Definition}[section]
\newtheorem{thm}[df]{Theorem}
\newtheorem{pro}[df]{Proposition}
\newtheorem{cor}[df]{Corollary}
\newtheorem{rema}[df] {Remark}
\newtheorem{lem}[df] {Lemma}
\title{Further results on the $(b, c)$-inverse, the outer inverse $A^{(2)}_{\T, \S}$ and the Moore-Penrose inverse in the Banach context}
\author{Enrico Boasso}
\date{ }
\begin{document}
\maketitle

\begin{abstract}\noindent In this article properties of the $(b, c)$-inverse, the inverse along an element, the outer inverse with prescribed range and null space \OIP and the Moore-Penrose 
inverse will be studied in the contexts of Banach spaces operators, Banach algebras and $C^*$-algebras. The main properties to be considered are the continuity, the differentiability and the openness of the sets of all invertible elements defined by all the aforementioned outer inverses but the Moore-Penrose inverse. The relationship between the $(b, c)$-inverse and the outer inverse \OIP will be also characterized.\par
\medskip	
\noindent {\bf Keywords:} $(b, c)$-inverse; outer inverse \OIP; Moore-Penrose inverse; Banach algebra; $C^*$-algebra; Banach space operator 
	\par
\medskip
\noindent {\bf AMS Subjects Classifications:} 46H05; 46L05; 47A05; 15A09
\end{abstract}
 
\section{Introduction}

Recently two outer inverses have been introduced: the $(b, c)$-inverse and the inverse along an element, see \cite{D} and \cite{mary}, respectively.
These two inverses are related; in fact, the latter is a particular case of the former. It is worth noticing one of the main properties of these inverses, namely, they encompass
several well known outer inverses such as the Drazin inverse, the group inverse and the Moore-Penrose inverse.
Furthermore, several authors have studied these notions, see for the $(b, c)$-inverse
\cite{D,D2,CCW,KC,WCGC,KWC,b2,r} and for the invese along an element \cite{mary, M2, marybis, mary2, bb1, bb2, ZCLG, ZPCZ}.
In particular, in \cite{b2} and  \cite{bb2} several properties of the $(b, c)$-inverse and the inverse along an element  were studied in the Banach context, respectively.

On the other hand, one of the most well known outer inverses is the outer inverse with prescribed range and null space, i.e., the outer invers \OIP.
This inverse has been studied in the frames of matrices, Hilbert space operators and Banach space operators. To learn about  the \OIP outer inverse in Banach spaces, see for example \cite{YW, DX, LYZW}.

The main objective of this article is to deepen the knowledge of the three aformentioned outer inverses in the contexts of Banach algebras, $C^*$-algebras, Banach space operators
and Hilbert space operators. However, as an application of the main results, properties of the Moore-Penrose inverse will be also presented.

The article is organized as follows. In section 3, after having recalled some preliminary definitions and facts in section 2, the relationship between the $(b, c)$-inverse (in particular the inverse along an element)
and the outer inverse \OIP will be studied. In section 4 both the set of all $(b, c)$-invertible elements (in particular the set of all invertible elements along a fixed element) 
and the set of all operator for which the outer inverse \OIP exists will be proved to be open. The continuity of the $(b, c)$-inverse of Banach space operators and of Banach algebra and $C^*$-algebra elements will be characterized in 
section 5; two main notions will be used to accomplish this aim: the gap between two subspaces and the Moore-Penrose inverse. The diffrentiability of the $(b, c)$-inverse in Banach algebras and $C^*$-algebras will be studied in section 6; the Moore-Penrose inverse will be also applied in this section. Finally, the continuity and differentiability of the outer inverse \OIP will be characterized in section 7 using again the gap between subspaces and the Moore-Penrose inverse. In addition, in section 5 and 6, as an application of the main results of these sections,
the continuity and the differentiability of the Moore-Penrose inverse for Banach algebra elements and Banach space operators will be studied, respectively.

\section{Preliminary definitions}

\noindent From now on, $\A$ will denote a unitary Banach algebra with unit $\uno$  while $\A^{-1}$ and $\A^\bullet$ will stand for the set 
of invertible elements and the set of idempotents of $\A$, respectively. A particular case is  $\L (\X)$, the Banach algebra of all
linear and bounded maps defined on and with values in the Banach space $\X$.  However, in the present work it will be necessary to consider the Banach space
of all operators defined on the Banach space $\X$ with values in the Banach space $\Y$, which will be denoted by $\L(\X, \Y)$. Note that if $T\in\L (\X, \Y)$, then $\N(T)\subseteq\X$ and $\R(T)\subseteq \Y$ will stand for 
the null space and the range of the operator $T$, respectively. For example, when  $\A$ is a unitary Banach algebra and $x\in \A$,  the operators $L_x\colon \A\to \A$ and $R_x\colon \A\to \A$ 
are the maps defined as follows: given $z\in \A$, $L_x(z)= xz$ and $R_x (z)= zx$. Observe that since $\A$ is unitary, then $\parallel L_a\parallel=\parallel a\parallel=\parallel R_a\parallel$. Moreover, the following notation will be used:
\begin{align*}
&x^{-1}(0)=\N(L_x),&  &x\A=\R(L_x),& &x_{-1}(0)=\N(R_x),& &\A x=\R(R_x).&
\end{align*}
Note that  when no confusion is possible, the identity operator defined on the 
Banach space $\X$ will be denoted by $I\in\L(\X)$; otherwise it will be denoted by $I_{\X}$. In addition, given  a Hilbert space $\H$ and  a closed subspace $\M\subseteq \H$, $P_{\M}^\perp\in\L(\H)^\bullet$ wil stand for the  orthogonal projector with range $\M$. \par

An element $a \in \A$ will be said to be {\it regular}, if there exists $x \in \A$ such that $a=axa$.
The element $x$, which is not uniquely determined by $a$, will be said to be  a {\it generalized inverse}
or an {\it inner inverse} of $a$. In addition, $\hat{\A}$ will stand for the set of all regular elements of $\A$ and given $a\in\hat{\A}$, $a\{1\}$ will denote the set of all
generalized inverses of $a$. On the other hand,  
if $y \in \A$ satisfies $yay=y$, then $y$  will be said to be an {\it outer inverse} of $a$. Moreover, an element $z$ will be said to be a \it normalized 
generalized inverse of $a$,  \rm if $z$ is both an inner and an outer inverse of $a$. Recall that if $b$ is an inner inverse of $a$, then
$bab$ is a normalized generalized inverse of $a$.

Now the definition of one of the key notion of this article will be recalled. Note that this notion was originally introduced in the context of semigroup, however,
since the frame of this article are Banach algebras and Banach space operators, the notion under consideration, as well as all the object considered in this work, will be introduced and studied 
in the Banach context.\par

\begin{df}[{\hspace{-1pt}\cite[Definition 1.3]{D}}]\label{def1}
Let $\A$ be a unitary Banach algebra and consider $b, c \in \A$. The element $a\in\A$
will be said to be $(b,c)$-invertible, if there exists $y \in \A$ such that the following
equations hold:
\begin{enumerate}[{\rm (i)}]
\item$y\in(b\A y)\cap (y\A c)$,
\item $b=yab$, $c=cay$.
\end{enumerate}
\end{df}

\indent In the same conditions of Definition \ref{def1}, if such an inverse exists, then it is unique (\hspace{-1pt}\cite[Theorem 2.1 (i)]{D}). Thus in what follows,
if the element $y$ in Definition \ref{def1} exists, then it will be denoted by $a^{-(b,\hbox{ }c)}$. In addition,
$a^{-(b,\hbox{ }c)}$ is an outer inverse of $a$ (\hspace{-1pt}\cite[Theorem 2.1 (ii)]{D}) and $b$ and $c$ are regular (\hspace{-1pt}\cite[Remark 2.2 (iii)]{b2} or \cite[Proposition 3.3]{WCGC}). 

\indent A particular case of the $(b,c )$-inverse is the Bott-Duffin $(p, q)$-inverse.

\begin{df}[{\hspace{-1pt}\cite[Definition 3.2]{D}}]\label{def15}Let $\A$ be a unitary Banaxh algebra and consider $p$, $q\in\A^\bullet$. The element
$a\in\A$ will be said to be Bott-Duffin $(p, q)$-invertible, if there exists $y\in \A$ such that 
\begin{enumerate}[{\rm (i)}]
\item $y=py=yq$,
\item $yap=p$ and $qay=q$.
\end{enumerate}
\end{df}

\indent Clearly, given $p$, $q\in\A^\bullet$, the Bott-Duffin $(p, q)$-inverse is nothing but the $(b, c)$-inverse 
when $b$ and $c$ are idempotents. In addition, since there exists at most one $(b, c)$-inverse,
the Bott-Duffin $(p, q)$-inverse is unique, if it exists. According to what has been said, if $a\in\A$ is
Bott-Duffin  $(p,q)$-invertible, then the element $y$ in Definition \ref{def15} will be denoted by
$a^{-(p, \hbox{ }q)}$. To learn more on the outer inverses recalled in Definition \ref{def1} and Definition \ref{def15}, see \cite{D, D2, CCW, KC, WCGC, KWC, b2, r}.

Next the definition of the inverse along an element will be recalled. This is another particular case of the $(b, c)$-inverse.
 
\begin{df}[{\hspace{-1pt}\cite[Definition 4]{mary}}] \label{def2}Consider $a$, $d\in\A$. The element $a$ 
is invertible along $d$, if there exists $b\in\A$ such that
 $b$ is an outer inverse of $a$, $b\A=d\A$ and $\A b=\A d$.\end{df}

\indent Recall that, in the same conditions of Definition \ref{def2}, according to \cite[Theorem 6]{mary}, if such $b\in\A$ exists, then it is unique.
Therefore, the element $b$ satisfying  Definition \ref{def2} will be said to be the \it inverse of $a$ along $d$. \rm
In this case, the inverse under consideration will be denoted by $a^{-d}$. Moreover, according to \cite[Proposition 6.1]{D}, the inverse along an element
is a particular case of the $(b, c)$-inverse. In fact, the element $a$ is invertible along $d$ if and only if it is $(d, d)$-invertible.
Furthermore, in this case, $a^{-d}=a^{-(d, d)}$. To learn more on this outer inverse, see  \cite{mary, M2, marybis, mary2, bb1, bb2, ZCLG, ZPCZ}.

\indent One of the most studied generalized inverses is the outer inverse  \OIP, i.e., the outer inverse of the operator $A\in \L(\X, \Y)$ with 
prescribed range $\T\subseteq \X$ and null space $\S\subseteq Y$, where $\X$ and $\Y$ are Banach spaces. 
This generalized inverse was studied for matrices and for operators defined on Hilbert and on Banach spaces.  Recall that this inverse
is unique, when it exists (\hspace{-1pt}\cite[Lemma 1]{LYZW}).\par  

\begin{df}\label{def30}Let $\X$ and $\Y$ be Banach spaces, $A\in\L (\X, \Y)$ and $\T$ and $\S$ two closed subspaces of $\X$ and $\Y$, respectively. 
If there exists a {\rm(}necessarily unique{\rm)} operator $B\in\L (\Y, \X)$ such that
$B$ is an outer inverse of $A$, $\R(B)=\T$ and $\N(B)=\S$, then $B$ will be said to be the \OIP outer inverse of $A$. 
\end{df}
\indent According to \cite[Lemma 1]{LYZW}, a necessary and sufficient condition for the existence of \OIP  is that
$\T$ and $\S$ are complemented subspaces of $\X$ and $\Y$, respectively, $A\mid_\T^{A(\T )}\colon \T\to A(\T)$ is invertible and $A(\T)\oplus \S=\Y$. In particular, using this latter decomposition,
\OIP is the following operator: 
\begin{align*}
&A^{(2)}_{\T,\S}\mid_\S=0,&  &(A\mid_T^{A(\T)})^{-1}=A^{(2)}_{\T,\hbox{ }\S}\mid^T_{A(\T)}\colon A(\T)\to \T.&\\
\end{align*}
\noindent To learn more properties of the \OIP outer inverse in Banach spaces, see \cite{YW, DX, LYZW}.

\indent To characterize the continuity of the outer inverses considered in this article, it is necessary to recall the definition of the Moore-Penrose inverse. 
Let $\A$ be a $C^*$-algebra. An element $a\in\A$ will be said to be \it Moore-Penrose invertible, \rm if there exists  $b\in \A$ such that
the following equations hold:
\begin{align*}
&a=aba,&  &b=bab,& &(ab)^*=ab,& &(ba)^*=ba.&\\
\end{align*}

To give the notion of a Moore-Penrose invertible Banach algebra element, the definition of an hermitian element need to be recalled first.
Given a Banach algebra $\A$, an element $z\in\A$ is said to be \it hermitian, \rm if $\parallel exp(ita)\parallel =1$,  for all $ t\in\mathbb{R}$
(\hspace{-1pt}see \cite{Vv}, \cite[Chapter 4]{Dw} and \cite[Chapter I, Section 10]{BD}). When $\A$ is a $C^*$-algebra, $a\in\A$ is hermitian
if and only if it is self-adjoint (\hspace{-1pt}\cite[Proposition 20, Chapter I, Section 12]{BD}). Now Moore-Penrose Banach algebra elements can be defined.

Given a Banach algebra  $\A$, an element $x\in\A$ will be said to be \it Moore-Penrose invertible, \rm if 
there exists $b\in\A$ such that $b$ is a normalized generalized inverse of $a$ and the elements $ab$ and $ba$ are hermitian.
Since the Moore-Penrose inverse is unique, when it exists (see \cite[Lemma 2.1]{V3}), the element $b$ will be denoted by $a^\dag$. Moreover, $\A^\dag$ will stand for the set of all 
Moore-Penrose invertible elements of $\A$. Note that $\A^\dag\subseteq \hat{\A}$. Furthermore, when $\A$ is a $C^*$-algebra, $\A^\dag=\hat{\A}$
(\hspace{-1pt}\cite[Theorem 6]{HM1}); however, when the Banach algebra $\A$ is not a $C^*$-algebra, in general, this result does not hold (\hspace{-1pt}\cite[Remark 4]{B}).
To learn the definition of the Moore-Penrose inverse for matrices, see \cite{Pe}; to learn more
properties of this generalized inverse see, for $C^*$ algebras, \cite{HM1, HM2, kol}, and for Banach algebras, \cite{V2, V3, B}.

\indent To prove some of the main results of this article, the definition of the gap between two subspaces need to be recalled. 
Let $\X$ be a Banach space and consider $\M$ and $\N$ two closed subspaces in $\X$. If $\M=0$, then set $\delta (\M, \N)=0$,
otherwise set
$$
\delta (\M, \N)=\hbox{\rm sup}\{\hbox{\rm dist}(x,\N)\colon x\in\M, \, \parallel x\parallel =1\},
$$  
where $\hbox{\rm dist}(x,\N)=\hbox{\rm inf}\{\parallel x-y\parallel\colon y\in\N\}$. The \it gap between the subspaces $\M$ and $\N$ \rm  is 
$$
\gap{\M}{\N}= \hbox{\rm max}\{ \delta (\M, \N), \delta (\N, \M)\}. 
$$
\noindent To learn more on this notion, see \cite{DX, K}.

\section{The relationship between the $(b, c)$-inverse and \OIP}

\noindent The main objective of this section is to prove that given a Banach space $\X$, the $(B, C)$-inverse in $\L(\X)$ consists in a reformulation of 
the outer inverse \OIP, in other words, the outer inverse introduced in \cite[Definition 1.3]{D} is an extension to semigroups of the outer inverse
with prescribed range and null space. 

First an equivalent formulation of Definition \ref{def1} will be given in the context of Banach space operators. To this end, however, some
preparation is needed. In the following remark a particular operator will be introduced.

\begin{rema}\label{rem3.4}\rm
Recall that given a Banach space $\X$, a necessary and sufficient condition for $F\in \L(\X)$ to be a regular operator is that $\N(F)$ and $\R(F)$ are
closed and complemented susbpsaces of $\X$. Suppose that $F\in \L(\X)$ satisfies this condition and let  $\N$ and $\M$ be two  subspaces of $\X$ such that
$$
\N(F)\oplus \N=\X=\R(F)\oplus \M.
$$ 
Consider the isomorphism $F_1=F\mid_{\N}^{\R(F)}\colon \N\to \R(F)$ and define the map $S_F\in\L(X)$ as follows: 
\begin{align*}
& &&S_F\mid{\M}=0,& &S_F\mid_{\R(F)}=\iota_{\N, \X}F_1^{-1},&
\end{align*}
where $\iota_{\N, \X}\colon \N\to\X$ is the inclusion map. 
The operator $S_F\in\L(\X)$ will be used in the proofs of the next results.
\end{rema}

\begin{lem}\label{lem3.3} Let $\X$ be a Banach space and consider $F\in\L(\X)$ a regular operator and  $S_F\in\L(\X)$ the map defined in 
Remark \ref{rem3.4}. The following statements hold.\par
\begin{enumerate}[{\rm (i)}] 
\item If $X\in\L(\X)$ is such that $\R(X)=\R(F)$, then  $X=FS_FX$.
\item If $X\in\L(\X)$ is such that $\N(X)=\N(F)$, then $X=XS_FF$.
\end{enumerate}
\end{lem}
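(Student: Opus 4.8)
The plan is to reduce everything to the observation that $FS_F$ and $S_FF$ are the natural projections associated with the two direct sum decompositions. First I would compute $FS_F$ on the summands of $\X=\R(F)\oplus\M$: by definition $S_F$ vanishes on $\M$, so $FS_F\mid_\M=0$, while on $\R(F)$ one has $FS_F\mid_{\R(F)}=F\iota_{\N,\X}F_1^{-1}=F_1F_1^{-1}=I_{\R(F)}$, since $F$ composed with the inclusion $\iota_{\N,\X}$ is precisely $F_1$ regarded with codomain $\R(F)$. Hence $FS_F$ is the idempotent $P\in\L(\X)^\bullet$ with $\R(P)=\R(F)$ and $\N(P)=\M$. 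Symmetrically, I would compute $S_FF$ on the summands of $\X=\N(F)\oplus\N$: it vanishes on $\N(F)$ because $F$ does, and on $\N$ it equals $S_FF_1=\iota_{\N,\X}F_1^{-1}F_1=\iota_{\N,\X}$, i.e. the identity on $\N$; thus $S_FF$ is the idempotent $Q\in\L(\X)^\bullet$ with $\R(Q)=\N$ and $\N(Q)=\N(F)$.

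For part (i), given $X\in\L(\X)$ with $\R(X)=\R(F)$, every vector $X\xi$ lies in $\R(F)=\R(P)$, so $P(X\xi)=X\xi$ for all $\xi\in\X$; therefore $FS_FX=PX=X$.

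For part (ii), given $X\in\L(\X)$ with $\N(X)=\N(F)$, I would write an arbitrary $\xi\in\X$ as $\xi=u+v$ with $u\in\N$ and $v\in\N(F)=\N(X)$. Then $Q\xi=u$, and since $Xv=0$ we get $XQ\xi=Xu=Xu+Xv=X\xi$; hence $XS_FF=XQ=X$.

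The only point that needs some care is the bookkeeping with the inclusion map $\iota_{\N,\X}$ and with the restriction–corestriction isomorphism $F_1=F\mid_\N^{\R(F)}$ when verifying $FS_F=P$ and $S_FF=Q$; beyond that, there is no real obstacle, as the statement follows immediately from $\R(P)=\R(F)$ and $\N(Q)=\N(F)$ together with the hypotheses on $X$.
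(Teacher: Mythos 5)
Your proposal is correct and follows essentially the same route as the paper: the paper's proof likewise rests on the observations that $FS_F$ restricted to $\R(F)$ is the inclusion $\iota_{\R(F),\X}$ and that $S_FF$ restricted to $\N$ is $\iota_{\N,\X}$, combined with the decomposition $\N(F)\oplus\N=\X$; you merely make the resulting idempotents $P$ and $Q$ explicit. No gaps.
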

\begin{proof} Note that $FS_F\mid_{\R(F)}=\iota_{\R(F), \X}$, where $\iota_{\R(F), \X}\colon \R(F)\to \X$ is the inclusion map.
If $\R(X)=\R(F)$, then $X=FS_FX$.\par

Similarly, $S_FF\mid_{\N}=\iota_{\N, \X}$, where $\N$ is the subspace of $\X$ considered in Remak \ref{rem3.4}. 
If $\N(X)=\N(F)$, then since $\N(F)\oplus \N=\X$, $X=XS_FF$.
\end{proof}

The following proposition is a key step in the reformulation of Definition \ref{def1} for Banach space operators.

\begin{pro}\label{pro3.5} Let $\X$ be a Banach space and consider two regular operators $F$, $G\in\L(\X)$.
\begin{enumerate}[{\rm (i)}] 
\item A necessary and sufficient condition for $F\L(\X)=G\L(\X)$ is that $\R(F)=\R(G)$.
\item $\L(\X)F=\L(\X)G$ if and only if $\N(F)=\N(G)$. 
\end{enumerate}
\end{pro}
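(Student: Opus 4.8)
The plan is to prove (i) and deduce (ii) by a symmetric argument (working with right multiplication operators $R_F, R_G$ instead of left ones). For (i), the forward implication is the easy half: if $F\L(\X)=G\L(\X)$, then in particular $F = GH$ for some $H \in \L(\X)$, so $\R(F) = \R(GH) \subseteq \R(G)$, and symmetrically $\R(G) \subseteq \R(F)$; hence $\R(F) = \R(G)$. Note this direction does not even use regularity.

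For the converse, assume $\R(F) = \R(G)$ and both $F$ and $G$ are regular. I want to show $F\L(\X) \subseteq G\L(\X)$; the reverse inclusion follows by symmetry. The key tool is Lemma \ref{lem3.3}: let $S_G \in \L(\X)$ be the operator attached to $G$ via Remark \ref{rem3.4}. Since $F \in \L(\X)$ has $\R(F) = \R(G)$, part (i) of Lemma \ref{lem3.3} (applied with $X = F$) gives $F = G S_G F$. But $S_G F \in \L(\X)$, so $F = G(S_G F) \in G\L(\X)$. This gives $F\L(\X) \subseteq G\L(\X)$, and interchanging the roles of $F$ and $G$ (using that $\R(G) = \R(F)$ and $F$ regular, with $S_F$) yields the opposite inclusion, so $F\L(\X) = G\L(\X)$.

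For (ii), the argument is the mirror image. The forward direction: if $\L(\X)F = \L(\X)G$, then $F = HG$ for some $H$, whence $\N(G) \subseteq \N(HG) = \N(F)$, and symmetrically $\N(F)\subseteq\N(G)$, so $\N(F)=\N(G)$; again regularity is not needed here. For the converse, assume $\N(F) = \N(G)$ with both regular, and apply part (ii) of Lemma \ref{lem3.3}: with $S_G$ attached to $G$, since $\N(F) = \N(G)$ we get $F = F S_G G \in \L(\X)G$, hence $\L(\X)F \subseteq \L(\X)G$; the reverse inclusion follows by symmetry, giving $\L(\X)F = \L(\X)G$.

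I do not anticipate a genuine obstacle here: the whole content is packaged into Lemma \ref{lem3.3}, and the only points requiring a word of care are (a) checking that the ``easy'' inclusions really only need one-sided factorizations $F = GH$, which is immediate from $F\L(\X)=G\L(\X)$ applied to the element $F = F\uno$ (or rather $F \cdot I$), and (b) making sure that in invoking Lemma \ref{lem3.3} the operator $X$ being fed in (namely $F$, resp. $G$) need not itself be regular — indeed the lemma only requires the range (resp. null space) hypothesis on $X$, while regularity is demanded of the operator generating $S$. Both are straightforward once stated.
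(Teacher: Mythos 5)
Your proposal is correct and follows essentially the same route as the paper: the easy direction via the factorizations $F=GU$, $G=FV$, and the converse by applying Lemma \ref{lem3.3} (i), respectively (ii), to produce $F=GS_GF$ and $G=FS_FG$ (resp. $F=FS_GG$ and $G=GS_FF$). The paper's proof is just a more compressed version of the same argument, including your observation that only the operator generating $S$ needs to be regular.
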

\begin{proof}If there exist $U$, $V\in\L(\X)$ such that $F=GU$ and $G=FV$, then $\R(F)=\R(G)$.
On the other hand, if $\R(F)=\R(G)$, then according to Lemma \ref{lem3.3} (i), $G=FS_FG$ and $F=GS_GF$,
which implies that $F\L(\X)=G\L(\X)$.

A similar argument, using in particular Lemma \ref{lem3.3} (ii), proves the second statement.
\end{proof}

\begin{thm}\label{thm3.1}Let $\X$ be a Banach space and consider two regular operators $B$, $C\in\L(\X)$.
The following statements are equivalent.
\begin{enumerate}[{\rm (i)}] 
\item The operator $A\in\L(\X)$ is $(B, C)$-invertible.
\item There exists a bounded and linear map $X\in\L(\X)$ such that 
$$
B=XAB, \hskip.3truecmC=CAX, \hskip.3truecm\R(X)=B,\hskip.3truecm \N(X)=\N(C).
$$
\end{enumerate}
Moreover, in this case $X=A^{-(B, C)}$.
\end{thm}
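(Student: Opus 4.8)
The plan is to set up a dictionary, via Lemma~\ref{lem3.3}, between the ideal-theoretic clauses of Definition~\ref{def1} and conditions on ranges and null spaces; once this is in place, both implications reduce to a few lines.

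\emph{From (i) to (ii).} I would assume $A$ is $(B,C)$-invertible and put $X=A^{-(B,C)}$. The equalities $B=XAB$ and $C=CAX$ are exactly Definition~\ref{def1}(ii), so only $\R(X)=\R(B)$ and $\N(X)=\N(C)$ need to be checked. Writing $B=X(AB)$ gives $\R(B)\subseteq\R(X)$, while Definition~\ref{def1}(i) provides $U\in\L(\X)$ with $X=B(UX)$, so $\R(X)\subseteq\R(B)$; hence $\R(X)=\R(B)$. Dually, $C=CAX$ gives $\N(X)\subseteq\N(C)$ (if $Xz=0$ then $Cz=CAXz=0$), and Definition~\ref{def1}(i) provides $V\in\L(\X)$ with $X=(XV)C$, so $\N(C)\subseteq\N(X)$; hence $\N(X)=\N(C)$.

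\emph{From (ii) to (i).} Let $X$ be a map as in (ii). Since $B$ is regular with $\R(X)=\R(B)$, Lemma~\ref{lem3.3}(i) applied to $F=B$ yields $X=BS_BX$, that is $X\in B\L(\X)X$. Since $C$ is regular with $\N(X)=\N(C)$, Lemma~\ref{lem3.3}(ii) applied to $F=C$ yields $X=XS_CC$, that is $X\in X\L(\X)C$. Therefore $X$ verifies Definition~\ref{def1}(i), and by hypothesis it verifies Definition~\ref{def1}(ii), so $A$ is $(B,C)$-invertible. Finally, since the $(b,c)$-inverse is unique (\cite[Theorem 2.1 (i)]{D}), this $X$ must be $A^{-(B,C)}$, which establishes the last assertion.

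\emph{Main obstacle.} Within this theorem there is essentially no difficulty left: all the substance has been pushed into Remark~\ref{rem3.4} and Lemma~\ref{lem3.3}, whose role is precisely to convert the membership $X\in B\L(\X)X$ into the geometric identity $\R(X)=\R(B)$, and dually on the null-space side. The operator $S_F$ behaves as a generalized inverse of $F$ fitted to the chosen complements $\N(F)\oplus\N=\X=\R(F)\oplus\M$, and the only hypothesis actually used is the regularity of $B$ and $C$, which guarantees that $S_B$ and $S_C$ exist. With these tools the argument is a bookkeeping translation, and it is this translation that exhibits the $(B,C)$-inverse in $\L(\X)$ as a reformulation of the outer inverse $A^{(2)}_{\T,\S}$.
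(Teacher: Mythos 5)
Your argument is correct and coincides with the paper's own proof: the forward direction extracts $\R(X)=\R(B)$ and $\N(X)=\N(C)$ directly from the defining equations, and the converse uses Lemma~\ref{lem3.3} with the operators $S_B$ and $S_C$ of Remark~\ref{rem3.4} to recover $X\in B\L(\X)X\cap X\L(\X)C$, concluding by uniqueness of the $(B,C)$-inverse. No gaps; this is essentially the same proof as in the paper.
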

\begin{proof}
According to Definition \ref{def1}, if the $(B, C)$-inverse of $A$ exists, then there are $X$, $T_1$ and $T_2\in\L(\X)$ such that 
$$
B=XAB,  \hskip.3truecm C=CAX,  \hskip.3truecm X= BT_1, \hskip.3truecm X=T_2C.
$$
In particular, $\R(B)=\R(X)$ and $\N(X)=\N(C)$.\par

\indent On the other hand, suppose that statement (ii) holds. Since $B$ (respectively $C$) is regular and $\R(X)=\R(B)$ (respectively $\N(X)=\N(C)$), according to Lemma \ref{lem3.3},
$X=BS_BX$ (respectively $X=XS_CC$), where $S_B$ (respectively $S_C$) is the operator considered in Remark \ref{rem3.4}. Therefore, $X\in B\L(\X)X\cap X\L(\X)C$. Since the $(B, C)$-inverse is 
unique, when it exists, $X=A^{-(B, C)}$.
\end{proof}

Next the main result of this section will be proved

\begin{thm}\label{thm3.2} Let $\X$ be a Banach space and consider two regular operators $B$, $C\in\L(\X)$.
The following statements are equivalent.
\begin{enumerate}[{\rm (i)}] 
\item The operator $A\in\L(\X)$ is $(B, C)$-invertible.
\item There exists $X\in \L(\X)$ such that $X=XAX$, $\R(X)=\R(B)$ and $\N(X)=\N(C)$.
\item The outer inverse $A^{(2)}_{\R(B), \N(C)}$ exists.
\item $A\mid_{\R(B)}^{\R(AB)}\colon \R(B)\to\R(AB)$ is invertible and $\R(AB)\oplus \N(C)=\X$.
\end{enumerate}
\noindent Furthermore, in this case $A^{-(B, C)}=X=A^{(2)}_{\R(B),\, \\N(C)}$.
\end{thm}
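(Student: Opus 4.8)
The plan is to establish the cycle (i)$\Rightarrow$(ii)$\Rightarrow$(i), giving (i)$\Leftrightarrow$(ii), and then to read off (ii)$\Leftrightarrow$(iii) essentially from the definitions and (iii)$\Leftrightarrow$(iv) from the known criterion for the existence of $A^{(2)}_{\T,\S}$; the final identity is then immediate from Theorem \ref{thm3.1} and the uniqueness of the outer inverse with prescribed range and null space. For (i)$\Rightarrow$(ii): if $A$ is $(B,C)$-invertible, Theorem \ref{thm3.1} furnishes $X\in\L(\X)$ with $B=XAB$, $C=CAX$, $\R(X)=\R(B)$, $\N(X)=\N(C)$, and moreover $X=A^{-(B,C)}$; since $A^{-(B,C)}$ is an outer inverse of $A$ we have $X=XAX$, which is (ii).

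The main work is (ii)$\Rightarrow$(i). Assume $X=XAX$, $\R(X)=\R(B)$, $\N(X)=\N(C)$. By Theorem \ref{thm3.1} it suffices to recover the two absorption identities $B=XAB$ and $C=CAX$, since the range and null space conditions are already in hand. Note first that $A$ is an inner inverse of $X$ (as $XAX=X$), so $X$ is regular; hence $B$ and $X$ are regular operators with $\R(B)=\R(X)$, and Proposition \ref{pro3.5}(i) gives $B\L(\X)=X\L(\X)$, so $B=XU$ for some $U\in\L(\X)$ and therefore $XAB=XAXU=(XAX)U=XU=B$. Dually, $C$ and $X$ are regular with $\N(C)=\N(X)$, so Proposition \ref{pro3.5}(ii) gives $\L(\X)C=\L(\X)X$, whence $C=VX$ for some $V\in\L(\X)$ and $CAX=VXAX=V(XAX)=VX=C$. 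Thus Theorem \ref{thm3.1} applies: $A$ is $(B,C)$-invertible and $X=A^{-(B,C)}$.

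It remains to connect (ii), (iii), (iv). Because $B$ and $C$ are regular, $\R(B)$ and $\N(C)$ are closed and complemented subspaces of $\X$, as recalled in Remark \ref{rem3.4}; therefore (ii) asserts exactly the existence of an outer inverse of $A$ with prescribed range $\R(B)$ and prescribed null space $\N(C)$, which by Definition \ref{def30} is the statement that $A^{(2)}_{\R(B),\N(C)}$ exists, i.e. (iii); the reverse implication is trivial on taking $X:=A^{(2)}_{\R(B),\N(C)}$. For (iii)$\Leftrightarrow$(iv) I would invoke \cite[Lemma 1]{LYZW} with $\T=\R(B)$ and $\S=\N(C)$: the complementedness hypotheses there hold automatically by regularity of $B$ and $C$, and since $A(\R(B))=AB\X=\R(AB)$, the remaining requirements — invertibility of $A\mid_{\R(B)}^{\R(AB)}$ and $\R(AB)\oplus\N(C)=\X$ — are precisely (iv). Combining the three equivalences yields that (i)--(iv) are equivalent. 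Finally, the operator $X$ constructed above is an outer inverse of $A$ with range $\R(B)$ and null space $\N(C)$, so by the uniqueness statement of \cite[Lemma 1]{LYZW} it coincides with $A^{(2)}_{\R(B),\N(C)}$, while $X=A^{-(B,C)}$ by Theorem \ref{thm3.1}; hence $A^{-(B,C)}=X=A^{(2)}_{\R(B),\N(C)}$.

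I expect the only genuinely delicate point to be the step (ii)$\Rightarrow$(i): the hypotheses of (ii) control only $\R(X)$ and $\N(X)$, so one has to manufacture the two-sided absorption equations $B=XAB$, $C=CAX$ before Theorem \ref{thm3.1} can be invoked; the observation that $X$ is itself regular (with $A$ as an inner inverse) is what lets Proposition \ref{pro3.5} bridge that gap, and one could equivalently argue via the idempotents $XA$ and $AX$, which restrict to the identity on $\R(X)$ and kill exactly $\N(X)$.
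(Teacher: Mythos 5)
Your proposal is correct, and its overall skeleton (equivalences (ii)$\Leftrightarrow$(iii)$\Leftrightarrow$(iv) via \cite[Lemma 1]{LYZW} together with regularity of $B$ and $C$, and the final identities via uniqueness of the outer inverse with prescribed range and null space and of the $(B,C)$-inverse) matches the paper's. The one place where you genuinely diverge is the step (i)$\Leftrightarrow$(ii): the paper invokes Drazin's result \cite[Proposition 6.1]{D}, which recasts $(B,C)$-invertibility as the existence of an outer inverse $X$ of $A$ with $X\L(\X)=B\L(\X)$ and $\L(\X)X=\L(\X)C$, and then passes from these one-sided ideal equalities to the range/null-space conditions ``according to the proof of Theorem \ref{thm3.1}''. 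You instead bypass \cite[Proposition 6.1]{D} entirely: for (ii)$\Rightarrow$(i) you observe that $X=XAX$ makes $X$ regular (with $A$ as an inner inverse), so Proposition \ref{pro3.5} applied to the pairs $(B,X)$ and $(C,X)$ yields $B=XU$ and $C=VX$, whence the absorption identities $XAB=B$ and $CAX=C$ follow by a one-line computation, and Theorem \ref{thm3.1} then delivers $(B,C)$-invertibility and $X=A^{-(B,C)}$. This buys a more self-contained argument that spells out explicitly a step the paper leaves implicit, at the cost of repeating in substance the ideal-to-range translation already encoded in Proposition \ref{pro3.5}; the paper's route is shorter because it leans on the external semigroup-level characterization of the $(b,c)$-inverse. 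Both arguments are sound, and your treatment of (ii)$\Leftrightarrow$(iii), (iii)$\Leftrightarrow$(iv) and of the identity $A^{-(B,C)}=X=A^{(2)}_{\R(B),\N(C)}$ coincides with the paper's.
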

\begin{proof}According to \cite[Proposition 6.1]{D}, statement (i) is equivalent to the fact that there exists an operator $X\in\L(\X)$
such that $X$ is an outer inverse of $A$, $X\L(\X)=B\L(\X)$ and $\L(\X)X=\L(\X)C$. However, according to the proof of Theorem \ref{thm3.1},
these conditions are equivalent to statement (ii). In addition, since the $(B,C)$-inverse is unique, when it exists, $X=A^{-(B, C)}$.

Statement (ii) and (iii) are equivalent; moreover $X=A^{(2)}_{\R(B),\, \N(C)}$ (\hspace{-1pt}\cite[Lemma 1]{LYZW}).

To prove the equivalence between statements (iii) and (iv), apply \cite[Lemma 1]{LYZW} and recall that, since $B$ and $C$ are regular, $\R(B)$ and $\N(C)$ are
closed and complemented subspaces of $\X$.
\end{proof}

Note that Theorem \ref{thm3.2} was proved for square matrices in \cite[Theorem 1.5]{r}. The following results will be derived from what has been proved.

\begin{cor}\label{cor3.6} Let $\X$ be a Banach space and consider $A$, $B$, $C\in\L(\X)$ such that $B$ and $C$ are regular. The following statements are equivalent.
\begin{enumerate}[{\rm (i)}] 
\item $A^{-(B, C)}$ exists.
\item $A^{-(F, G)}$ exists for any $F$, $G\in\L(\X)$, $F$,$G$ regular, such that $\R(F)=\R(B)$ and $\N(G)=\N(C)$.
\item The Bott-Duffin inverse $A^{-(P, Q)}$ exists for any $P$, $Q\in \L(\X)^\bullet$ such that $\R(P)=\R(B)$ and $\N(Q)=\N(C)$. 
\end{enumerate}
\noindent Furthermore, in this case, $A^{-(B, C)}=A^{-(F, G)}=A^{-(P, Q)}$.
\end{cor}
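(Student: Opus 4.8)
The plan is to reduce everything to Theorem~\ref{thm3.2}, whose characterizations of $(B,C)$-invertibility depend on $B$ and $C$ only through $\R(B)$ and $\N(C)$. More precisely, I would observe that statements (ii), (iii) and (iv) of Theorem~\ref{thm3.2} are phrased purely in terms of the subspaces $\R(B)$ and $\N(C)$, together with the requirement that these be closed and complemented (which is automatic once the defining element is regular). Hence if $F$, $G\in\L(\X)$ are regular with $\R(F)=\R(B)$ and $\N(G)=\N(C)$, then $A$ is $(B,C)$-invertible if and only if it is $(F,G)$-invertible, and in that case both inverses coincide with $A^{(2)}_{\R(B),\,\N(C)}$.

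Concretely, I would argue as follows. Assume (i), i.e. $A^{-(B,C)}$ exists. By Theorem~\ref{thm3.2}, the outer inverse $A^{(2)}_{\R(B),\N(C)}$ exists and equals $A^{-(B,C)}$. Now take any regular $F$, $G$ with $\R(F)=\R(B)$, $\N(G)=\N(C)$. Then $A^{(2)}_{\R(F),\N(G)}=A^{(2)}_{\R(B),\N(C)}$ exists, so again by Theorem~\ref{thm3.2} (applied with the pair $F$, $G$) the operator $A$ is $(F,G)$-invertible and $A^{-(F,G)}=A^{(2)}_{\R(F),\N(G)}=A^{-(B,C)}$. This gives (i)$\Rightarrow$(ii). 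The implication (ii)$\Rightarrow$(i) is trivial by taking $F=B$, $G=C$. For the equivalence with (iii), I would note that every idempotent in $\L(\X)$ is regular (it is its own inner inverse), so (iii) is the special case of (ii) in which $F$ and $G$ are required to be idempotents $P$, $Q$; thus (ii)$\Rightarrow$(iii) needs only the remark that such idempotents exist, namely any bounded projections $P$ onto $\R(B)$ along some complement and $Q$ with kernel $\N(C)$ — these exist precisely because $B$ and $C$ are regular, so $\R(B)$ and $\N(C)$ are closed and complemented. Conversely (iii)$\Rightarrow$(i) is again immediate since an idempotent is a legitimate choice in (ii), or one may invoke (ii)$\Rightarrow$(i) directly. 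The equality $A^{-(B,C)}=A^{-(F,G)}=A^{-(P,Q)}$ then follows since all three equal $A^{(2)}_{\R(B),\,\N(C)}$.

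I do not expect a serious obstacle here; the content is essentially a bookkeeping corollary of Theorem~\ref{thm3.2}. The one point that deserves care — and the only place where regularity is genuinely used — is the claim that for a regular operator the range (resp. null space) is closed and complemented, and conversely that such a subspace is the range of some idempotent and the null space of another; this is exactly the standard characterization of regularity recalled in Remark~\ref{rem3.4}, so it may be cited rather than reproved. A second minor subtlety is that in (iii) one must ensure the candidate idempotents $P$, $Q$ actually exist, i.e.\ that one is not quantifying over an empty set; this again follows from the complementability of $\R(B)$ and $\N(C)$. With these observations in place the proof is short.

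\begin{proof}
Since $B$ and $C$ are regular, the subspaces $\R(B)$ and $\N(C)$ are closed and complemented in $\X$ (Remark~\ref{rem3.4}); the same holds for $\R(F)$, $\N(G)$ whenever $F$, $G$ are regular, and for $\R(P)$, $\N(Q)$ whenever $P$, $Q\in\L(\X)^\bullet$, the latter being regular as idempotents. Moreover, by the complementability just mentioned, there do exist idempotents $P$, $Q\in\L(\X)^\bullet$ with $\R(P)=\R(B)$ and $\N(Q)=\N(C)$.

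Suppose $A^{-(B,C)}$ exists. By the equivalence of statements (i) and (iii) in Theorem~\ref{thm3.2}, the outer inverse $A^{(2)}_{\R(B),\,\N(C)}$ exists and coincides with $A^{-(B,C)}$. Let $F$, $G\in\L(\X)$ be regular with $\R(F)=\R(B)$ and $\N(G)=\N(C)$. Then $A^{(2)}_{\R(F),\,\N(G)}=A^{(2)}_{\R(B),\,\N(C)}$ exists, so applying Theorem~\ref{thm3.2} to the pair $F$, $G$ shows that $A$ is $(F,G)$-invertible with
$$
A^{-(F,G)}=A^{(2)}_{\R(F),\,\N(G)}=A^{(2)}_{\R(B),\,\N(C)}=A^{-(B,C)}.
$$
This proves (i)$\Rightarrow$(ii), and (i)$\Rightarrow$(iii) follows by specializing $F$, $G$ to idempotents $P$, $Q$ as above, in which case $A^{-(P,Q)}=A^{-(B,C)}$.

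The implication (ii)$\Rightarrow$(i) is obtained by taking $F=B$ and $G=C$, and (iii)$\Rightarrow$(i) by taking any idempotents $P$, $Q$ with $\R(P)=\R(B)$, $\N(Q)=\N(C)$ and invoking the equivalence of (i) and (iii) in Theorem~\ref{thm3.2} (which is applicable since idempotents are regular). Finally, all the inverses appearing above equal $A^{(2)}_{\R(B),\,\N(C)}$, whence $A^{-(B,C)}=A^{-(F,G)}=A^{-(P,Q)}$.
\end{proof}
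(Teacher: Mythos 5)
Your proof is correct and follows exactly the route the paper intends: the paper's proof of Corollary \ref{cor3.6} is simply ``Apply Theorem \ref{thm3.2},'' and your argument is a spelled-out version of that application, including the (correct) observations that idempotents are regular and that suitable idempotents with range $\R(B)$ and null space $\N(C)$ exist by complementability.
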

\begin{proof} Apply Theorem \ref{thm3.2}.
\end{proof}

Note that Corollary \ref{cor3.6} can be rephrased using the outer inverse $A^{(2)}_{\T, \S}$.

\begin{rema}\label{rema3.8}\rm Let $\X$ be a Banach space and consider $\S$ and $\T$ two closed and complemented subspaces of $\X$. Let $A\in \L(\X)$.
The following statement are equivalent.\par
\begin{enumerate}[{\rm (i)}] 
\item The outer inverse $A^{(2)}_{\T, \S}$ exists.
\item $A^{-(B, C)}$ exists for any $B$, $C\in\L(\X)$, $B$, $C$ regular, such that $\R(B)=\T$ and $\N(C)=\S$.
\item The Bott-Duffin inverse $A^{-(P, Q)}$ exists for any $P$, $Q\in \L(\X)^\bullet$ such that $\R(P)=\T$ and $\N(Q)=\S$. 
\end{enumerate}

\noindent Moreover, in this case $A^{(2)}_{\T, \S}=A^{-(B, C)}=A^{-(P, Q)}$.\par
\noindent The proof of the equivalence among statements (i)-(iii) and the last identity can be derived from Theorem \ref{thm3.2} and Corollary \ref{cor3.6}.
\end{rema}

It is worth noticing, as it has been mentioned in the first paragraph of this section,  that Theorem \ref{thm3.2} and Remark \ref{rema3.8} show that in $\L(\X)$, $\X$  a Banach space, the $(B, C)$-inverse is a reformulation of \OIP,
so that  \cite[Definition 1.3]{D} consists in an extension of the latter outer inverse to semigroups. 
In fact, given  $A\in\L(\X)$, \OIP and the $(B, C)$-inverse of $A\in\L(\X)$ refer to the same object -under the conditions of Theorem \ref{thm3.2} and Remark \ref{rema3.8}, but it could be said that the former outer inverse is defined
from a spatial point of view while the latter from an algebraic point of view. Actually, in the first case subspaces of an underlying space (a Banach space $\X$ or $\mathbb{C}^n$,
$n\in\mathbb{N}$) are used in the definition, but in the second only elements of a semigroup can be considered; precisely, in $\L(\X)$ it is possible to associate two specific subspaces to any element of this algebra -the range and the null space,
which leads to the aforementioned results.

Next the inverse along an operator will be considered.

\begin{cor}\label{cor3.9} Let $\X$ be a Banach space and consider $A$, $D\in\L(\X)$ such that $D$ is regular. The following statements are equivalent.
\begin{enumerate}[{\rm (i)}] 
\item $A^{-D}$ exists.
\item There exists $X\in \L(\X)$ such that $XAD=D=DAX$, $\R(X)=\R(D)$ and $\N(X)=\N(D)$.
\item There exists $Y\in\L(\X)$ such that $Y$ is an outer inverse of $A$, $\R(Y)=\R(D)$ and $\N(Y)=\N(D)$.
\item The outer inverse $A^{(2)}_{\R(D), \N(D)}$ exists.
\item $A\mid_{\R(D)}^{\R(AD)}\colon \R(D)\to\R(AD)$ is invertible and $\R(AD)\oplus \N(D)=\X$.
\item $A^{-F}$ exists for all $F\in\L(\X)$ such that  $F$ is regular,  $\R(F)=\R(D)$ and $\N(F)=\N(D)$.
\end{enumerate}
Moreover, in this case, $A^{-D}=X=Y=A^{(2)}_{\R(D), \N(D)}=A^{-F}$.
\end{cor}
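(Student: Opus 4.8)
The plan is to reduce the whole statement to Theorem \ref{thm3.2} (together with Theorem \ref{thm3.1}, Proposition \ref{pro3.5} and Corollary \ref{cor3.6}) specialized to $B=C=D$. The bridge is \cite[Proposition 6.1]{D}, recalled in Section 2: $A$ is invertible along $D$ if and only if $A$ is $(D,D)$-invertible, and in that case $A^{-D}=A^{-(D,D)}$. Since $D$ is regular by hypothesis, Theorem \ref{thm3.2} applies verbatim with the pair $(D,D)$ in place of $(B,C)$. Reading its statements (iii) and (iv) with $\R(B)=\R(D)$ and $\N(C)=\N(D)$ gives directly the equivalence of (i) of the corollary with (iv) and (v) here, and with the existence of an $X\in\L(\X)$ satisfying $X=XAX$, $\R(X)=\R(D)$, $\N(X)=\N(D)$; moreover it identifies all of these objects, so that $A^{-D}=A^{(2)}_{\R(D),\N(D)}=X$.

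For statement (ii) I would invoke Theorem \ref{thm3.1} with $B=C=D$: its condition (ii) reads exactly $D=XAD$, $D=DAX$, $\R(X)=\R(D)$, $\N(X)=\N(D)$, which is the condition displayed in (ii) of the corollary, and Theorem \ref{thm3.1} shows it is equivalent to $A$ being $(D,D)$-invertible with $X=A^{-(D,D)}=A^{-D}$. For statement (iii), the cleanest route is Definition \ref{def2} combined with Proposition \ref{pro3.5}: $A^{-D}$ exists if and only if there is an outer inverse $Y$ of $A$ with $Y\L(\X)=D\L(\X)$ and $\L(\X)Y=\L(\X)D$; here $Y$ is regular (being an outer inverse, $YAY=Y$, so $A$ is an inner inverse of $Y$), so Proposition \ref{pro3.5} converts the two module identities into $\R(Y)=\R(D)$ and $\N(Y)=\N(D)$, and conversely. (Alternatively, statement (iii) appears already inside the proof of Theorem \ref{thm3.2}.)

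Statement (vi) is the specialization of Corollary \ref{cor3.6} to $B=C=D$ and $F=G$: for any regular $F$ with $\R(F)=\R(D)$ and $\N(F)=\N(D)$, Corollary \ref{cor3.6} gives that $A^{-(F,F)}$ exists if and only if $A^{-(D,D)}$ does, with $A^{-(F,F)}=A^{-(D,D)}$, and via \cite[Proposition 6.1]{D} this reads $A^{-F}=A^{-D}$. Assembling (i)$\Leftrightarrow$(ii)$\Leftrightarrow$(iii)$\Leftrightarrow$(iv)$\Leftrightarrow$(v) from Theorems \ref{thm3.1}--\ref{thm3.2} and Definition \ref{def2}, and (i)$\Leftrightarrow$(vi) from Corollary \ref{cor3.6}, together with the identifications collected along the way, yields the final chain of equalities. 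I do not expect a genuine obstacle: the argument is essentially bookkeeping, the only point needing an explicit line being the regularity of $Y$ in (iii) so that Proposition \ref{pro3.5} may be applied, and the verification that all the inverses produced by the various cited results coincide.
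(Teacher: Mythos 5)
Your proposal is correct and follows essentially the same route as the paper: identify $A^{-D}$ with $A^{-(D,D)}$ via \cite[Proposition 6.1]{D} and then specialize Theorem \ref{thm3.1}, Theorem \ref{thm3.2} and Corollary \ref{cor3.6} to the pair $(D,D)$. The only difference is cosmetic: statement (iii) is precisely statement (ii) of Theorem \ref{thm3.2} with $B=C=D$, so your detour through Proposition \ref{pro3.5} (and the observation that $Y$ is regular) is correct but unnecessary.
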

\begin{proof} Recall that according to \cite[Proposition 6.1]{D}, $A^{-D}=A^{-(D, D)}$, when one of these outer inverses exists. 
Apply now Theorem \ref{thm3.1}, Theorem \ref{thm3.2} and Corollary \ref{cor3.6}.
\end{proof}

In the following proposition the Banach algebra case will be considered.

\begin{pro}\label{pro3.10} Let $\A$ be a unitary Banach algebra and consider $a$, $b$, $c\in\A$ such that $b$ and $c$ are regular.
The following statements hold.
\begin{enumerate}[{\rm (i)}] 
\item If $a^{-(b, c)}$ exists, then $L_{a^{-(b, c)}}=L_a^{-(L_b, L_c)}=(L_a)^{(2)}_{b\A, c^{-1}(0)}\in\L(\A)$.
\item Suppose that $L_a^{-(L_b, L_c)}=(L_a)^{(2)}_{b\A, c^{-1}(0)}\in \L(\A)$ exists and  there is $z\in\A$ such that $(L_a)^{(2)}_{b\A, c^{-1}(0)}=L_z$. 
Then, $a^{-(b, c)}$ exists and $a^{-(b, c)}=z$.
\item If $a^{-(b, c)}$ exists, then $R_{a^{-(b, c)}}=R_a^{-(R_c, R_b)}=(R_a)^{(2)}_{\A c, b_{-1}(0)}\in\L(\A)$.
\item Suppose that $R_a^{-(R_c, R_b)}=(R_a)^{(2)}_{\A c, b_{-1}(0)}\in \L(\A)$ exists and there is $w\in\A$ such that $(R_a)^{(2)}_{\A c, b_{-1}(0)}=R_w$. 
Then, $a^{-(b, c)}$ exists and $a^{-(b, c)}=w$.
\end{enumerate}
\end{pro}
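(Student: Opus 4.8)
The plan is to exploit that the left regular representation $x\mapsto L_x$ is an injective, isometric, unital algebra homomorphism $\A\to\L(\A)$, whereas the right regular representation $x\mapsto R_x$ is an injective, isometric, unital \emph{anti}-homomorphism; in both cases injectivity follows from $L_x(\uno)=x=R_x(\uno)$. Statements (i) and (iii) are the ``forward'' direction and merely transport the equations of Definition \ref{def1} through $L$ or $R$, while (ii) and (iv) are the ``backward'' direction and will rely on Theorem \ref{thm3.2}, applied inside the unitary Banach algebra $\L(\A)$, as the bridge between the $(B,C)$-inverse and the outer inverse $A^{(2)}_{\T,\S}$. A preliminary remark used throughout is that $L_b,L_c$ (resp.\ $R_b,R_c$) are regular: from $b=bub$ one gets $L_b=L_bL_uL_b$ and $R_b=R_bR_uR_b$, and similarly for $c$.

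For (i), put $y=a^{-(b,c)}$. By Definition \ref{def1} there are $t,s\in\A$ with $y=bty$, $y=ysc$, $b=yab$, $c=cay$; applying the homomorphism $L$ gives $L_y=L_bL_tL_y$, $L_y=L_yL_sL_c$, $L_b=L_yL_aL_b$, $L_c=L_cL_aL_y$, so $L_y$ satisfies Definition \ref{def1} for the $(L_b,L_c)$-invertibility of $L_a$; by uniqueness of the $(b,c)$-inverse, $L_a^{-(L_b,L_c)}=L_y=L_{a^{-(b,c)}}$. Since $L_b,L_c$ are regular, Theorem \ref{thm3.2} identifies this element with $(L_a)^{(2)}_{\R(L_b),\N(L_c)}=(L_a)^{(2)}_{b\A,\,c^{-1}(0)}$. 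For (iii) the steps are the same, but $R$ reverses products: $y=bty$ yields $R_y=R_yR_tR_b$, $y=ysc$ yields $R_y=R_cR_sR_y$, $b=yab$ yields $R_b=R_bR_aR_y$, and $c=cay$ yields $R_c=R_yR_aR_c$; thus $R_y$ is the $(R_c,R_b)$-inverse of $R_a$ (the interchange of $b$ and $c$ is forced here), and Theorem \ref{thm3.2} gives $R_{a^{-(b,c)}}=R_y=(R_a)^{(2)}_{\R(R_c),\N(R_b)}=(R_a)^{(2)}_{\A c,\,b_{-1}(0)}$.

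For (ii), suppose $L_a^{-(L_b,L_c)}$ exists and equals $L_z$ for some $z\in\A$. From Definition \ref{def1} together with the fact that this element is an outer inverse of $L_a$ we have $L_b=L_zL_aL_b$, $L_c=L_cL_aL_z$ and $L_zL_aL_z=L_z$; injectivity of $L$ converts these into $b=zab$, $c=caz$ and $zaz=z$. The delicate point is to recover $z\in(b\A z)\cap(z\A c)$: this does \emph{not} follow directly from $L_z\in(L_b\L(\A)L_z)\cap(L_z\L(\A)L_c)$, since an arbitrary element of $\L(\A)$ need not be a left multiplier. I would circumvent this with Theorem \ref{thm3.2}, which gives $L_z=(L_a)^{(2)}_{b\A,\,c^{-1}(0)}$, hence $\R(L_z)=b\A$ and $\N(L_z)=c^{-1}(0)$, i.e.\ $z\A=b\A$ and $z^{-1}(0)=c^{-1}(0)$. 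Now $z=zaz\in z\A=b\A$, so $z=bv$ for some $v$, whence $z=zaz=b(va)z\in b\A z$; and, taking an inner inverse $s$ of $c$ (available since $c$ is regular), $c(\uno-sc)=c-csc=0$ shows $\uno-sc\in c^{-1}(0)=z^{-1}(0)$, so $z(\uno-sc)=0$, i.e.\ $z=zsc\in z\A c$. Hence $z$ satisfies Definition \ref{def1} and, by uniqueness, $a^{-(b,c)}=z$. Statement (iv) is entirely parallel with $R$ in place of $L$: writing $R_a^{-(R_c,R_b)}=R_w$, injectivity of $R$ turns $R_c=R_wR_aR_c$ and $R_b=R_bR_aR_w$ into $c=caw$ and $b=wab$, the outer-inverse identity becomes $waw=w$, Theorem \ref{thm3.2} gives $\A w=\A c$ and $w_{-1}(0)=b_{-1}(0)$, and one obtains $w\in(b\A w)\cap(w\A c)$ exactly as before (using an inner inverse of $b$ for the null-space step), so $a^{-(b,c)}=w$.

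The only real obstacle is this backward passage in (ii) and (iv): a factorization carried out in $\L(\A)$ does not pull back to $\A$ automatically, and the resolution is to trade the factorization conditions of Definition \ref{def1} for the equivalent range/null-space description supplied by Theorem \ref{thm3.2}, where the pull-back is immediate. Everything else reduces to bookkeeping with $L$ and $R$; the one structural feature worth flagging is that $R$ being an anti-homomorphism forces the exchange of $b$ and $c$ (and of $b\A$ with $\A c$, and of $c^{-1}(0)$ with $b_{-1}(0)$) in parts (iii) and (iv).
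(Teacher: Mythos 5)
Your proof is correct and takes essentially the same route as the paper: both arguments transport everything through the regular representations, use Theorem \ref{thm3.2} (equivalently \cite[Lemma 1]{LYZW}) as the bridge to $(L_a)^{(2)}_{b\A,\,c^{-1}(0)}$ and $(R_a)^{(2)}_{\A c,\,b_{-1}(0)}$, and resolve the pull-back in (ii) and (iv) by trading the factorization conditions for the range/null-space description. The only difference is cosmetic: where you verify Definition \ref{def1} and the memberships $z\in(b\A z)\cap(z\A c)$ by hand (via inner inverses of $b$ and $c$), the paper delegates those steps to \cite[Proposition 6.1]{D} and \cite[Proposition 3.1 (b)]{bb1}.
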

\begin{proof} Recall that according to \cite[Proposition 6.1]{D}, necessary and sufficient for $a^{-(b, c)}$ to exist is that $a$ has an outer inverse, say $y$
($y=a^{-(b, c)}$), such that $y\A=b\A$ and $\A y=\A c$. Thus, if $a^{-(b, c)}$ exists, then $L_{a^{-(b, c)}}\in \L(\A)$ is an outer inverse of $L_a\in \L(\A)$ such
that 
\begin{align*}
\R(L_{a^{-(b, c)}})&=a^{-(b, c)}\A=b\A=\R(L_b),\\
\N(L_{a^{-(b, c)}})&=(a^{-(b, c)})^{-1}(0).
\end{align*}
\noindent However, note that $(a^{-(b, c)})^{-1}(0)=c^{-1}(0)=\N(L_c)$. In fact, since $\A a^{-(b, c)}=\A c$, there exist $u_1$, $v_1\in\A$ such that $a^{-(b, c)}=u_1c$ and $c=v_1a^{-(b, c)}$,
which implies that $(a^{-(b, c)})^{-1}(0)=c^{-1}(0)$. Therefore, according to \cite[Lemma 1]{LYZW} and Theorem \ref{thm3.2}, 
$$
L_{a^{-(b, c)}}=(L_a)^{(2)}_{b\A, c^{-1}(0)}=L_a^{-(L_b, L_c)}.
$$

Now suppose that statement (ii) holds. Note that according to Theorem \ref{thm3.2}, $L_a^{-(L_b, L_c)}\in \L(\A)$ exists if and only if 
$(L_a)^{(2)}_{b\A, c^{-1}(0)}\in \L(\A)$ exists; moreover, in this case, both maps coincide. 

Since $(L_a)^{(2)}_{b\A, c^{-1}(0)}\in \L(\A)$ is an outer inverse of $L_a\in\L(\A)$,  $z$ is an outer inverse of $a$. In addition, 
\begin{align*}
z\A &=\R(L_z)=\R((L_a)^{(2)}_{b\A, c^{-1}(0)})=b\A, \\
z^{-1}(0)&=\N(L_z)=\N((L_a)^{(2)}_{b\A, c^{-1}(0)})=c^{-1}(0).\\
\end{align*}
However, since $c$ and $z$ are regular, according to \cite[Proposition 3.1 (b) (iii)]{bb1}, $\A z=\A c$.  
Consequently, according to \cite[Proposition 6.1]{D}, $a^{-(b, c)}$ exists and $a^{-(b, c)}=z$.

As in the proof of statement (i), if $a^{-(b, c)}$ exists, then $R_{a^{-(b, c)}}$ is an outer inverse of $R_a\in \L(\A)$ such that
\begin{align*}
\R(R_{a^{-(b, c)}})&=\A a^{-(b, c)}=\A c=\R(R_c),\\
\N(R_{a^{-(b, c)}})&=(a^{-(b, c)})_{-1}(0).\\
\end{align*}
\noindent However, since $b\A=a^{-(b, c)}\A$, there exist $u_2$, $v_2\in\A$ such that $b=a^{-(b, c)}u_2$ and $a^{-(b, c)}=bv_2$, which implies that 
$(a^{-(b, c)})_{-1}(0)=b_{-1}(0)=\N (R_b)$. Therefore, according to \cite[Lemma 1]{LYZW} and Theorem \ref{thm3.2}, statement (iii) holds.

If statement (iv) holds, then as in the proof of statement (ii), observe that according to Theorem \ref{thm3.2}, $R_a^{-(R_c, R_b)}\in \L(\A)$ exists if and only if 
$(R_a)^{(2)}_{\A c, b_{-1}(0)}$ exists; moreover, in this case, both maps coincide. Furthermore, since $(R_a)^{(2)}_{\A c, b_{-1}(0)}\in \L(\A)$ is an outer inverse of $R_a\in\L(\A)$,  
$w$ is an outer inverse of $a$. In addition, 
\begin{align*}
\A w &=\R(R_w)=\R((R_a)^{(2)}_{\A c, b _1{-1}(0)})=\A c, \\
w_{-1}(0)&=\N(R_w)=\N((R_a)^{(2)}_{\A c, b_{-1}(0)})=b_{-1}(0).\\
\end{align*}
\noindent However, since $b$ and $w$ are regular, according to \cite[Proposition 3.1 (b) (iv)]{bb1}, $w\A=b\A$. 
Hence,  according to \cite[Proposition 6.1]{D}, $a^{-(b, c)}$ exists and $a^{-(b, c)}=w$.
\end{proof}

\section{Openness}

\noindent Given an unitary Banach algebra $\A$, $\A^{-1}\subset\A$ is an open set. In this section the set of all $(b ,c)$-invertible elements ($b$, $c\in\A$) will be proved to be open.
Similar results will be presented for the inverse along an element and the outer inverse \OIP.

Let $\A$ be a unitary Banach algebra and consider  $b$, $c\in \A$. Let $\A^{-(b, c)}$ be the set of all $(b, c)$-invertible elements of $\A$,
i.e., 
$$
\A^{-(b, c)}=\{a\in\A\colon a^{-(b, c)}\hbox{ exists}\}.
$$
Recall that if $b$ or $c$ is not regular, then $\A^{-(b, c)}=\emptyset$  (\hspace{-1pt}\cite[Remark 2.2 (iii)]{b2}). In addition, note that if $b=0=c$,
then $\A^{-(b, c)}=\A$. In fact, in this case, given $a\in\A$,  $a^{-(b, c)}=0$ satisfies Definition \ref{def1}. Moreover, if $b=0$ and $c\in\hat{\A}$ is such that $c\neq 0$ (respectively if $b\in\hat{\A}$ is such that 
$b\neq 0$ and $c=0$),
according to Definition \ref{def1}, then $\A^{-(b, c)}=\emptyset$. Actually, if $b=0$ (respectively $c=0$) and $a^{-(b, c)}$ exists, then $a^{-(b, c)}=0$, which is impossible, for $c\neq 0$
(respectively $b\neq 0$).  In the following theorem the set $\A^{-(b, c)}$ will be proved to be open.

\begin{thm}\label{thm4.1}Let $\A$ be a unitary Banach algebra and consider $b$, $c\in\A$. Then $\A^{-(b, c)}$ is an open set.
Furthermore, if $b$, $c\in\hat{\A}\setminus\{0\}$, $a\in\A^{-(b, c)}$ and $e\in\A$ is such that $\parallel e\parallel< \frac{1}{\parallel a^{-(b, c)}\parallel}$,
then $a+e\in \A^{-(b, c)}$ and 
$$
(a+e)^{-(b, c)}= (\uno+ a^{-(b, c)}e)^{-1}a^{-(b,c)}=a^{-(b, c)}(\uno+ea^{-(b, c)})^{-1}.
$$
\end{thm}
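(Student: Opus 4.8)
The strategy is to reduce the perturbation result for the $(b,c)$-inverse to the corresponding, already-known perturbation result for an outer inverse with prescribed range and null space, via the identification established in Theorem \ref{thm3.2} and Proposition \ref{pro3.10}. For the openness of $\A^{-(b,c)}$, first I would dispose of the degenerate cases separately: if $b$ or $c$ is not regular then $\A^{-(b,c)}=\emptyset$ (open); if $b=0=c$ then $\A^{-(b,c)}=\A$ (open); and if exactly one of $b$, $c$ is zero and nonzero-regular, then $\A^{-(b,c)}=\emptyset$ again. So the only substantive case is $b,c\in\hat{\A}\setminus\{0\}$, and there openness will follow once the explicit perturbation formula with the stated radius $\frac{1}{\|a^{-(b,c)}\|}$ is proved, since that exhibits an open ball around each $a\in\A^{-(b,c)}$ inside $\A^{-(b,c)}$.

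\textbf{Main step: the perturbation formula.} Fix $b,c\in\hat{\A}\setminus\{0\}$, $a\in\A^{-(b,c)}$, and write $y=a^{-(b,c)}$; let $e\in\A$ with $\|e\|<1/\|y\|$, so that $\|ye\|\le\|y\|\,\|e\|<1$ and $\|ey\|<1$, whence $\uno+ye$ and $\uno+ey$ are invertible in $\A$ by the Neumann series. Set
$$
y_1=(\uno+ye)^{-1}y, \qquad y_2=y(\uno+ey)^{-1}.
$$
A standard identity (multiply $y(\uno+ey)=(\uno+ye)y$ on both sides appropriately) gives $y_1=y_2=:z$. The plan is then to verify directly that $z$ satisfies Definition \ref{def1} for $a+e$ with the same $b$ and $c$. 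For the containment condition (i), note $z=(\uno+ye)^{-1}y\in (\uno+ye)^{-1}(b\A y)\subseteq \A y$ and $z=y(\uno+ey)^{-1}\in y\A$; combining with $z\in\A y\subseteq \A c$ (since $\A y=\A c$) and $z\in y\A=b\A$ one gets $z\in (b\A z)\cap(z\A c)$ — here one uses that $y\in(b\A y)\cap(y\A c)$ together with $b\A=y\A$, $\A c=\A y$, which reduce $b\A z$ to $y\A z$ and so on, or more cleanly invoke Theorem \ref{thm3.2}(ii) to the effect that $z=zAz$ with the right range/nullspace suffices. For condition (ii), compute $z(a+e)b$: using $z=(\uno+ye)^{-1}y$ and $yab=b$ one has $(\uno+ye)z(a+e)b=y(a+e)b=yab+yeb=b+yeb=(\uno+ye)b$ provided $yeb=yeb$ — more carefully, $yab=b$ gives $ya$ acts as identity on $b$, and one writes $y(a+e)b=yab+yeb=b+yeb$, so $(\uno+ye)^{-1}y(a+e)b=(\uno+ye)^{-1}(b+yeb)=(\uno+ye)^{-1}(\uno+ye)b=b$, hence $z(a+e)b=b$. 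Symmetrically $c(a+e)z=c$ using $cay=c$ and the form $z=y(\uno+ey)^{-1}$. Thus $z=(a+e)^{-(b,c)}$ by uniqueness, which is exactly the asserted formula.

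\textbf{Alternative route and the likely obstacle.} Instead of the bare-hands verification above, one can pass through the operator picture: by Proposition \ref{pro3.10}(i), $L_{a^{-(b,c)}}=(L_a)^{(2)}_{b\A,\,c^{-1}(0)}$, so perturbing $a$ corresponds to perturbing $L_a$ by $L_e$ with $\|L_e\|=\|e\|$, and one can appeal to the known perturbation theory for $A^{(2)}_{\T,\S}$ in $\L(\A)$ (the analogue of results such as those cited from \cite{LYZW}, \cite{DX}) to conclude $L_{a+e}=L_a+L_e$ has the outer inverse $(\uno+L_{a^{-(b,c)}}L_e)^{-1}L_{a^{-(b,c)}}=L_{(\uno+ye)^{-1}y}$, which is again a left-multiplication operator, so Proposition \ref{pro3.10}(ii) returns the element-level statement. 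The main obstacle I anticipate is making sure the containment condition (i) of Definition \ref{def1} is genuinely reproved for $z$ relative to $a+e$ (not merely that $z$ is an outer inverse with the right one-sided ideals), and checking that the two expressions $(\uno+ye)^{-1}y$ and $y(\uno+ey)^{-1}$ really coincide and really land back in $b\A\cap\A c$; the cleanest way around this is to observe $b\A$ and $\A c$ are unchanged under the perturbation and that $z\A=y\A=b\A$ and $\A z=\A y=\A c$ because $z$ differs from $y$ by multiplication by the invertible elements $(\uno+ye)^{-1}$, resp.\ $(\uno+ey)^{-1}$, on the appropriate side. Once this bookkeeping is in place, the radius $1/\|a^{-(b,c)}\|$ drops out of the Neumann series condition and openness is immediate.
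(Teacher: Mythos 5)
Your argument is correct, but it takes a genuinely different route from the paper. The paper never verifies Definition \ref{def1} by hand: it passes to the regular representation, uses Proposition \ref{pro3.10} (i) to identify $L_{a^{-(b,c)}}$ with $(L_a)^{(2)}_{b\A,\,c^{-1}(0)}$, invokes the perturbation lemma for outer inverses with prescribed range and null space (\cite[Lemma 3.4]{DX}) to get $(L_{a+e})^{(2)}_{b\A,\,c^{-1}(0)}=(I+L_{a^{-(b,c)}}L_e)^{-1}L_{a^{-(b,c)}}=L_{a^{-(b,c)}}(I+L_eL_{a^{-(b,c)}})^{-1}$, observes this is the left multiplication by $f=(\uno+a^{-(b,c)}e)^{-1}a^{-(b,c)}=a^{-(b,c)}(\uno+ea^{-(b,c)})^{-1}$, and then returns to the algebra via Proposition \ref{pro3.10} (ii); your ``alternative route'' paragraph is in fact the paper's proof. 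Your main argument is instead a self-contained Neumann-series verification at the element level: with $y=a^{-(b,c)}$ and $z=(\uno+ye)^{-1}y=y(\uno+ey)^{-1}$, the computations $y(a+e)b=(\uno+ye)b$ and $c(a+e)y=c(\uno+ey)$ give condition (ii) of Definition \ref{def1}, and uniqueness finishes. What your route buys is independence from \cite[Lemma 3.4]{DX} and from Proposition \ref{pro3.10}; what the paper's route buys is that all the bookkeeping for condition (i) is absorbed into the already-proved equivalences. One spot in your write-up needs tightening: the verification of condition (i). Equalities of ideals such as $z\A=b\A$, $\A z=\A c$ alone do not yield $z\in(b\A z)\cap(z\A c)$, and Theorem \ref{thm3.2} (ii) is a statement about operators, not algebra elements. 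The clean fix is direct: from $y\in b\A y$ write $y=bsy$, so $z=y(\uno+ey)^{-1}=bsy(\uno+ey)^{-1}=bsz\in b\A z$, and from $y\in y\A c$ write $y=ytc$, so $z=(\uno+ye)^{-1}ytc=ztc\in z\A c$; alternatively check $z(a+e)z=z$ (which follows from $yay=y$) and quote the algebraic characterization \cite[Proposition 6.1]{D} exactly as in Proposition \ref{pro3.10}. With that adjustment, your degenerate-case discussion and the radius $1/\parallel a^{-(b,c)}\parallel$ give openness exactly as claimed.
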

\begin{proof}According to what has been said in the first paragraph of this section, it is enough to consider the case $b$, $c\in\hat{\A}\setminus\{0\}$. Recall, in addition, that under this hypothesis,
$a^{-(b, c)}\neq0$. In fact, according to Definition \ref{def1}, $a^{-(b, c)}=0$ implies that $b=0=c$. Moreover, note also that since $\parallel a^{-(b,c)}e\parallel <1$ (respectively  $\parallel e a^{-(b,c)}\parallel <1$),
$\uno+a^{-(b,c)}e\in \A^{-1}$ (respectively $\uno+ea^{-(b,c)}\in \A^{-1}$).

\indent Consider the operators  $L_a$, $L_{a^{-(b,c)}}$, $L_e \in\L(\A)$. According to Proposition \ref{pro3.10} (i),
$L_{a^{-(b,c)}}=(L_a)^{(2)}_{b\A, c^{-1}(0)}$.  Now, since $\A$ is a unitary algebra,
$$
\parallel (L_a)^{(2)}_{b\A, c^{-1}(0)}\parallel \parallel L_e\parallel= \parallel a^{-(b,c)}\parallel \parallel e\parallel <1.
$$
Thus, according to \cite[Lemma 3.4]{DX}, $(L_{a+e})^{(2)}_{b\A, c^{-1}(0)}$ exists and 
$$
(L_{a+e})^{(2)}_{b\A, c^{-1}(0)}=(I+ L_{a^{-(b,c)}}L_e)^{-1}L_{a^{-(b,c)}}=L_{a^{-(b,c)}}(I+L_eL_{a^{-(b,c)}})^{-1}.
$$
However,  
\begin{align*}
&(I+ L_{a^{-(b,c)}}L_e)^{-1}=L_{(\uno+a^{-(b,c)}e)^{-1}},& &(I+ L_eL_{a^{-(b,c)}})^{-1}=L_{(\uno+ea^{-(b,c)})^{-1}}.&
\end{align*}
Consequently, 
$$
(\uno+a^{-(b,c)}e)^{-1}a^{-(b,c)}=a^{-(b,c)}(\uno+ea^{-(b,c)})^{-1}.
$$
\indent Set $f=(\uno+a^{-(b,c)}e)^{-1}a^{-(b,c)}=a^{-(b,c)}(\uno+ea^{-(b,c)})^{-1}$. Since $(L_{a+e})^{(2)}_{b\A, c^{-1}(0)}=L_f$,
according to Proposition \ref{pro3.10} (ii), $(a+e)^{-(b, c)}$ exists and $(a+e)^{-(b, c)}=f$.
\end{proof}

In the following theorem   the case of the inverse along an element will be presented. To this end, given a unitary Banach algebra $\A$, let $\A^{-d}$ be the set of all elements of
$\A$ invertible along $d\in\A$, i.e., 
$$
\A^{-d}=\{a\in\A\colon a^{-d}\hbox{ exists}\}.
$$

\begin{thm}\label{thm4.2}Let $\A$ be a unitary Banach algebra and consider $d\in\A$. Then $\A^{-d}$ is an open set.
Furthermore, if $d\in\hat{\A}\setminus\{0\}$, $a\in\A^{-d}$ and $e\in\A$ is such that $\parallel e\parallel< \frac{1}{\parallel a^{-d}\parallel}$,
then $a+e\in \A^{-d}$ and 
$$
(a+e)^{-d}= (\uno+ a^{-d}e)^{-1}a^{-d}=a^{-d}(\uno+ea^{-d})^{-1}.
$$
\end{thm}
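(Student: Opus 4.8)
The plan is to reduce the entire statement to Theorem \ref{thm4.1} by means of the identification of the inverse along an element with a particular $(b,c)$-inverse. Recall from \cite[Proposition 6.1]{D} (see also Corollary \ref{cor3.9}) that $a$ is invertible along $d$ if and only if $a$ is $(d,d)$-invertible, and that in this case $a^{-d}=a^{-(d, d)}$. Consequently $\A^{-d}=\A^{-(d, d)}$ as subsets of $\A$, and on this common set the two inverses coincide element by element.

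With this in hand the first assertion is immediate: applying Theorem \ref{thm4.1} with $b=c=d$ shows that $\A^{-(d, d)}$ is open, hence so is $\A^{-d}$. It is worth recording that the degenerate cases are all covered by this reduction: if $d=0$ then $\A^{-d}=\A^{-(0,0)}=\A$, while if $d\notin\hat{\A}$ then $\A^{-d}=\A^{-(d, d)}=\emptyset$; in either situation the set is trivially open, so only the case $d\in\hat{\A}\setminus\{0\}$ carries genuine content, and that case is exactly the hypothesis needed to invoke the second part of Theorem \ref{thm4.1}.

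For the perturbation formula, I would take $d\in\hat{\A}\setminus\{0\}$, $a\in\A^{-d}$ and $e\in\A$ with $\parallel e\parallel<\frac{1}{\parallel a^{-d}\parallel}$. Since $a^{-d}=a^{-(d, d)}$, the hypothesis reads $\parallel e\parallel<\frac{1}{\parallel a^{-(d, d)}\parallel}$, so Theorem \ref{thm4.1} applied with $b=c=d\in\hat{\A}\setminus\{0\}$ gives $a+e\in\A^{-(d, d)}=\A^{-d}$ together with
$$
(a+e)^{-(d, d)}=(\uno+a^{-(d, d)}e)^{-1}a^{-(d, d)}=a^{-(d, d)}(\uno+ea^{-(d, d)})^{-1}.
$$
Substituting $a^{-(d, d)}=a^{-d}$ and $(a+e)^{-(d, d)}=(a+e)^{-d}$ in this chain of equalities yields precisely the displayed identities. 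Thus the whole argument is bookkeeping layered on top of Theorem \ref{thm4.1}; the only point that requires care — and it is not really an obstacle — is making the reduction to the $(d,d)$-inverse explicit, including the handling of the trivial cases $d=0$ and $d\notin\hat{\A}$.
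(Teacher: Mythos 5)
Your proposal is correct and follows exactly the paper's own route: identify $a^{-d}$ with $a^{-(d,d)}$ via \cite[Proposition 6.1]{D} and then invoke Theorem \ref{thm4.1} with $b=c=d$, the extra remarks on the cases $d=0$ and $d\notin\hat{\A}$ being harmless bookkeeping already implicit in that theorem.
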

\begin{proof} Note that according to \cite[Proposition 6.1]{D}, $\A^{-d}=\A^{-(d, d)}$. Now apply Theorem \ref{thm4.1}.
\end{proof}

\indent For sake of completeness, the case of the outer inverse with prescribed range and null space will be considered. Let $\T$ and $\S$ be two closed
subspace of the Banach spaces $\X$ and $\Y$, respectivley.  $\L(\X, \Y)^{(2)}_{\T, \S}$ will stand for  the set of all operators defined on $\X$ with values in $\Y$ whose outer inverse with range $\T$ and null space $\S$ exists, i.e.,
$$
\L(\X, \Y)^{(2)}_{\T, \S}=\{A\in\L(\X, \Y)\colon A^{(2)}_{\T, \S} \hbox{ exists}\}.
$$

\begin{thm}\label{thm4.3} Let $\X$ and $\Y$ be two Banach spaces and consider $\T$ and $\S$ two closed subspaces of $\X$ and $\Y$, respectively. Then, 
$\L(\X, \Y)^{(2)}_{\T, \S}$ is an open set.
\end{thm}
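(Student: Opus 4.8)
The plan is to reduce Theorem~\ref{thm4.3} to the openness result already available for operators via a perturbation lemma of the Neumann-series type. First I would fix $A_0\in\L(\X,\Y)^{(2)}_{\T,\S}$ and set $B_0=A_0^{(2)}_{\T,\S}\in\L(\Y,\X)$; by Definition~\ref{def30} we have $B_0A_0B_0=B_0$, $\R(B_0)=\T$ and $\N(B_0)=\S$. The goal is to produce $\varepsilon>0$ such that every $A\in\L(\X,\Y)$ with $\parallel A-A_0\parallel<\varepsilon$ still lies in $\L(\X,\Y)^{(2)}_{\T,\S}$. The natural candidate for the perturbed outer inverse is $B=(I_{\X}+B_0(A-A_0))^{-1}B_0$ (equivalently $B_0(I_{\Y}+(A-A_0)B_0)^{-1}$), which is well defined as soon as $\parallel A-A_0\parallel<1/\parallel B_0\parallel$, since then $\parallel B_0(A-A_0)\parallel<1$ forces $I_{\X}+B_0(A-A_0)\in\L(\X)^{-1}$.

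The key steps, in order, are: (1) check that this $B$ is an outer inverse of $A$, i.e.\ $BAB=B$; (2) check $\R(B)=\T$; and (3) check $\N(B)=\S$. For step~(1) one can argue exactly as in \cite[Lemma 3.4]{DX}, which was already invoked in the proof of Theorem~\ref{thm4.1}: the computation $BAB=B$ is formal, using $B_0A_0B_0=B_0$ together with the identity $(I_{\X}+B_0(A-A_0))^{-1}B_0 A = B_0 A_0 (I_{\X}+B_0(A-A_0))^{-1}$-type manipulations. For steps~(2) and (3), note that left-multiplication by the invertible operator $(I_{\X}+B_0(A-A_0))^{-1}$ does not change the range, so $\R(B)=\R((I_{\X}+B_0(A-A_0))^{-1}B_0)=\R(B_0)=\T$; dually, right-multiplying $B_0$ by the invertible $(I_{\Y}+(A-A_0)B_0)^{-1}$ does not change the null space, so $\N(B)=\N(B_0)=\S$. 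Hence $B=A^{(2)}_{\T,\S}$, so $A\in\L(\X,\Y)^{(2)}_{\T,\S}$, and $\varepsilon=1/\parallel B_0\parallel$ works; this shows the set is open.

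Alternatively, and perhaps more in the spirit of this paper, one could route through the equivalent characterization in Theorem~\ref{thm3.2}(iv)/\cite[Lemma 1]{LYZW}: $A\in\L(\X,\Y)^{(2)}_{\T,\S}$ iff $\T,\S$ are complemented, $A\mid_{\T}^{A(\T)}\colon\T\to A(\T)$ is invertible and $A(\T)\oplus\S=\Y$. One would then show directly that both the invertibility of the restriction and the direct-sum decomposition are stable under small perturbations of $A$, using the openness of the invertibles in $\L(\T,\Y)$ and a gap argument for the complement condition. I would still prefer the first route, since it both yields openness and gives an explicit formula for the perturbed outer inverse, mirroring Theorems~\ref{thm4.1} and~\ref{thm4.2}.

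The main obstacle is essentially bookkeeping rather than conceptual: one must be careful that $A-A_0$ now maps $\X\to\Y$ while $B_0$ maps $\Y\to\X$, so $B_0(A-A_0)\in\L(\X)$ and $(A-A_0)B_0\in\L(\Y)$ live in different algebras, and the two Neumann inverses must be kept on the correct sides; the identity $BAB=B$ has to be verified with this asymmetry in mind. Once the operator identities are written on the correct spaces, everything reduces to \cite[Lemma 3.4]{DX}, and indeed the cleanest write-up is probably just: ``apply \cite[Lemma 3.4]{DX} with $\varepsilon<1/\parallel A_0^{(2)}_{\T,\S}\parallel$.''
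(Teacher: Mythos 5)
Your route is essentially the paper's: after fixing $A_0$ in the set and putting $B_0=(A_0)^{(2)}_{\T,\S}$, everything is the Neumann-type perturbation result \cite[Lemma 3.4]{DX}, which is exactly what the paper invokes; the paper only adds the disposal of the degenerate cases (if $\T$ or $\S$ is not complemented then $\L(\X,\Y)^{(2)}_{\T,\S}=\emptyset$, and if $A^{(2)}_{\T,\S}=0$ then $\T=0$, $\S=\Y$ and the set is all of $\L(\X,\Y)$). Note that your radius $\varepsilon=1/\parallel B_0\parallel$ is not defined when $B_0=0$, so this trivial case should be split off as the paper does (there any $\varepsilon$ works).

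One local correction to your steps (2) and (3): the two invariance principles are interchanged. Left multiplication by an invertible operator preserves the null space but in general replaces the range by its image, and right multiplication by an invertible operator preserves the range but in general replaces the null space by a preimage. So from $B=(I_{\X}+B_0(A-A_0))^{-1}B_0$ you may conclude $\N(B)=\N(B_0)=\S$, while $\R(B)=\R(B_0)=\T$ should be read off from the other expression $B=B_0(I_{\Y}+(A-A_0)B_0)^{-1}$ --- the opposite of the attributions in your text. (Alternatively one can show that $(I_{\X}+B_0(A-A_0))^{-1}$ maps $\T$ onto $\T$, since $\R(B_0(A-A_0))\subseteq\T$, but that needs an extra argument.) Since you already record both factorizations and their equality, the fix is immediate, and with it your write-up collapses, as you yourself note, to citing \cite[Lemma 3.4]{DX}, which is precisely the paper's proof.
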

\begin{proof} According to \cite[Lemma 1]{LYZW}, if $\T$ or $\S$ is not a complemented subspaces of $\X$ and $\Y$ respectively, then $\L(\X, \Y)^{(2)}_{\T, \S}=\emptyset$. On the other hand, note that
if $A\in\L(\X, \Y)$  is such that $A^{(2)}_{\T, \S}$ exists and $A^{(2)}_{\T, \S}=0$, then $\T=0$ and $\S=\Y$. In addition, in this case,  $\L(\X, \Y)^{(2)}_{0, \Y}=\L(\X, \Y)$. In fact, given $A\in\L(\X, \Y)$,
$A^{(2)}_{0, \Y}=0$. To end the proof, apply \cite[Lemma 3.4]{DX}.
\end{proof} 

\section{Continuity of the $(b, c)$-inverse}

Recall that the notion of the gap between subspaces was used to study the continuity of the Moore-Penrose inverse ({\hspace{-1pt}\cite{V2}) and the Drazin inverse ({\hspace{-1pt}\cite{kr1, V}) in the Banach context.
In this section the aforementioned notion will be used to characterize the continuity of the $(b, c)$-inverse for Banach space operators and Banach algebra elements.
Another notion that will be central to present more characterizations will be the Moore-Penrose inverse. 

First  a particular case will be presented.

\begin{rema}\label{rema500}\rm \noindent (i) Let $\X$ be a Banach space and consider $A$, $B$, $C\in \L (\X)$ such that
$B$ and $C$ are regular, $A$ is $(B, C)$-invertible and $A^{-(B,C)}=0$. Let $(A_n)_{n\in\mathbb{N}}\subset \L(\X)$ be such that 
$(A_n)_{n\in\mathbb{N}}$ converges to $A\in\L(\X)$, and suppose that there exist two sequence of operators
$(B_n)_{n\in\mathbb{N}}$, $(C_n)_{n\in\mathbb{N}}\subset \L(\X)$ such that
for each $n\in\mathbb{N}$, $B_n$  and $C_n$ are regular and $A_n$ is $(B_n, C_n)$-invertible.
Then, $(A_n^{-(B_n, C_n)})_{n\in\mathbb{N}}$ converges to $A^{-(B,C)} (=0)$ if and only if there exists $n_0\in\mathbb{N}$
such that for each $n\ge n_0$,  $A_n^{-(B_n, C_n)}=0$. 
In fact, if  $(A_n^{-(B_n, C_n)})_{n\in\mathbb{N}}$ converges to 0, then $(A_n^{-(B_n, C_n)}A_n)_{n\in\mathbb{N}}$ converges to 0,
and according to \cite[Lemma 3.3]{kr1},  $(\hat{\delta}(\R(A_n^{-(B_n, C_n)}A_n), 0))_{n\in\mathbb{N}}$ converges to 0.
However, according to the definition of the gap between two subspaces (\hspace{-1pt}see \cite[Chapter 2, Section 2, Subsection 1]{K}),
if $\R(A_n^{-(B_n, C_n)}A_n)\neq 0$, then $(\hat{\delta}(\R(A_n^{-(B_n, C_n)}A_n), 0))=1$. Therefore, there exists $n_0 \in\mathbb{N}$
such that for each $n\ge n_0$, $\R(A_n^{-(B_n, C_n)})=\R(A_n^{-(B_n, C_n)}A_n)=0$. As a result, $A_n^{-(B_n, C_n)}=0$ for each $n\ge n_0$. The converse implication is evident.\par
\noindent (ii) Let $\A$ be a Banach algebra and consider $a\in\A$ and $b$, $c\in\hat{\A}$ such that $a$ is $(b, c)$-invertible and $a^{-(b, c)}=0$.
Suppose that there exist three sequence $(a_n)_{n\in\mathbb{N}}\subset \A$ and  $(b_n)_{n\in\mathbb{N}}$, $(c_n)_{n\in\mathbb{N}}\subset \hat{\A}$ such that
for each $n\in\mathbb{N}$, $a_n$ is $(b_n, c_n)$-invertible and $(a_n)_{n\in\mathbb{N}}$ converges to $a$.  Then, statement (i) can be extended to this case, i.e., necessary and sufficient for 
$(a_n^{-(b_n, c_n)})_{n\in\mathbb{N}}$ to converge to $a^{-(b, c)} (=0)$ is that there exists  
$n_0\in\mathbb{N}$ such that for all $n\ge n_0$, $a_n^{-(b_n, c_n)}=0$. Actually, to prove this equivalent condition, it is enough to apply statement (i) to $L_a$, $L_a^{-(L_b, L_c)}\in\L(\A)$ and
$(L_{a_n})_{n\in\mathbb{N}}$, $(L_{a_n}^{-(L_{b_n}, L_{c_n})})_{n\in\mathbb{N}}\subset\L(\A)$, ($L_a^{-(L_b, L_c)}=L_{a^{-(b, c)}}$,
$L_{a_n}^{-(L_{b_n}, L_{c_n})}=L_{a_n^{-(b_n, c_n)}}$, see Proposition \ref{pro3.10} (i)). Note that since $\A$ is a unitary Banach algebra, 
$(a_n)_{n\in\mathbb{N}}$ (respectively $(a_n^{-(b_n, c_n)})_{n\in\mathbb{N}}$) converges to $a$ (respectively to $a^{-(b, c)}$)
if and only if $(L_{a_n})_{n\in\mathbb{N}}$ (respectively $(L_{a_n^{-(b_n, c_n)}})_{n\in\mathbb{N}}$ converges to $L_a$ (respectively to $L_{a^{-(b, c)}}$).\par
\noindent (iii) In the same conditions of statement (ii), note that according to Definition \ref{def1}, $a^{-(b,c)}=0$ implies that $b=0=c$ . Similarly, for each $n\ge n_0$, $b_n=0=c_n$, i.e., 
the fact that $(a_n^{-(b_n, c_n)})_{n\in\mathbb{N}}$ converges to 0 determines the elements $b_n$ and $c_n$ for which $a_n$ is $(b_n, c_n)$-invertible ($n\ge n_0$). 
Naturally, given $a\in\A^{-(0, 0)}$, $a^{-(0, 0)}=0$. \par
\noindent (iv) It is worth noticing that if $\A$ is a unitary Banach algebra, $a\in\A$ and $b$, $c\in\hat{\A}$ are such that $a$ is $(b, c)$-invertible with $a^{-(b, c)}\neq 0$, then $b\neq 0$ and $c\neq 0$
(see Definition \ref{def1}).  
\end{rema}

Now  the continuity of the $(b, c)$-inverse will be studied using the gap between subspaces. Next follows the characterization of the continuity of $(B, C)$-invertible Banach space operators. 

\begin{thm}\label{thm5.2}Let $\X$ be a Banach space and consider $A$, $B$, $C\in \L (\X)$ such that
$B$ and $C$ are regular, $A$ is $(B, C)$-invertible and $A^{-(B,C)}\neq 0$. Suppose that there exist three sequences of 
operators $(A_n)_{n\in\mathbb{N}}$, $(B_n)_{n\in\mathbb{N}}$, $(C_n)_{n\in\mathbb{N}}\subset \L(\X)$ such that
for each $n\in\mathbb{N}$, $B_n$  and $C_n$ are regular and $A_n$ is $(B_n, C_n)$-invertible.
If  $(A_n)_{n\in\mathbb{N}}$ converges to $A$, then the following statements are equivalent.
\begin{enumerate}[{\rm (i)}] 
\item The sequence $(A_n^{-(B_n, C_n)})_{n\in\mathbb{N}}$ converges to $A^{-(B,C)}$.

\item The sequence $(A_n^{-(B_n, C_n)}A_n)_{n\in\mathbb{N}}$  \rm(\it respectively $(A_nA_n^{-(B_n, C_n)})_{n\in\mathbb{N}}$\rm) \it
converges to $A^{-(B, C)}A$ \rm(\it respectively  to $AA^{-(B, C)}$\rm).\it 

\item The sequence $(A_n^{-(B_n, C_n)}A_n)_{n\in\mathbb{N}}$ \rm(\it respectively $(\hat{\delta}(\N(C_n), \N(C)))_{n\in\mathbb{N}}$\rm) \it converges to $A^{-(B, C)}A$ 
 \rm(\it respectively  to 0\rm).\it 
\item The sequence $(A_nA_n^{-(B_n, C_n)})_{n\in\mathbb{N}}$  \rm(\it respectively $(\hat{\delta}(\R(B_n), \R(B)))_{n\in\mathbb{N}}$\rm) \it
converges to $AA^{-(B, C)}$  \rm(\it respectively to 0\rm).\it

\item The sequences $(\hat{\delta}(\R(B_n), \R(B)))_{n\in\mathbb{N}}$ and $(\hat{\delta}(\N(C_n), \N(C)))_{n\in\mathbb{N}}$
converge to $0$.
\item The sequences $(\hat{\delta}(\R(A_n^{-(B_n, C_n)}), \R(A^{-(B, C)})))_{n\in\mathbb{N}}$ and $(\hat{\delta}(\N(A_n^{-(B_n, C_n)}), \N(A^{-(B, C)})))_{n\in\mathbb{N}}$
converge to $0$.

\end{enumerate}
\end{thm}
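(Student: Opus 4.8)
The plan is to isolate the one substantial implication, $(v)\Rightarrow(i)$, and to reduce everything else to three elementary observations. First, by Theorem \ref{thm3.2} one has $A^{-(B,C)}=A^{(2)}_{\R(B),\N(C)}$ and $A_n^{-(B_n,C_n)}=(A_n)^{(2)}_{\R(B_n),\N(C_n)}$, so, abbreviating $X=A^{-(B,C)}$ and $X_n=A_n^{-(B_n,C_n)}$, we get $\R(X)=\R(B)$, $\N(X)=\N(C)$, $\R(X_n)=\R(B_n)$ and $\N(X_n)=\N(C_n)$; thus statement $(vi)$ is merely $(v)$ rewritten. Second, from $XAX=X$ and $X_nA_nX_n=X_n$ the operators $P:=XA$, $Q:=AX$, $P_n:=X_nA_n$, $Q_n:=A_nX_n$ are idempotents, with $\R(P_n)=\R(X_n)=\R(B_n)$, $\N(Q_n)=\N(X_n)=\N(C_n)$ and the analogous identities for $P$ and $Q$. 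Third, for any two bounded idempotents $E$, $F$ one has $\gap{\R(E)}{\R(F)}\le\parallel E-F\parallel$ (if $\parallel x\parallel=1$ and $Ex=x$, then $\hbox{\rm dist}(x,\R(F))\le\parallel x-Fx\parallel=\parallel(E-F)x\parallel$), and, since $\N(E)=\R(I-E)$, also $\gap{\N(E)}{\N(F)}=\gap{\R(I-E)}{\R(I-F)}\le\parallel E-F\parallel$.

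With these observations the routine implications fall out. $(i)\Rightarrow(ii)$: if $X_n\to X$ and $A_n\to A$, then $P_n\to P$ and $Q_n\to Q$ by continuity of multiplication in $\L(\X)$. $(ii)\Rightarrow(v)$, $(iii)\Rightarrow(v)$ and $(iv)\Rightarrow(v)$: in each case the convergence of $P_n$ to $P$ (resp. of $Q_n$ to $Q$) yields, by the third observation, $\gap{\R(B_n)}{\R(B)}\to 0$ (resp. $\gap{\N(C_n)}{\N(C)}\to 0$), while the remaining gap in $(v)$ is either obtained the same way or is already part of the hypothesis. $(v)\Leftrightarrow(vi)$: the first observation. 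Finally $(i)$ implies each of $(ii)$--$(vi)$ once we know $(i)\Rightarrow(ii)$ and $(i)\Rightarrow(v)$ (the latter following from $(i)\Rightarrow(ii)$ and $(ii)\Rightarrow(v)$), so the whole equivalence is settled as soon as $(v)\Rightarrow(i)$ is proved.

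To prove $(v)\Rightarrow(i)$ the plan is to freeze the moving subspaces by conjugation. Start from a bounded projection onto $\R(B)$ and one onto $\N(C)$ (these exist because $B$ and $C$ are regular). Since $\gap{\R(B_n)}{\R(B)}\to 0$ and $\gap{\N(C_n)}{\N(C)}\to 0$, Kato's perturbation theory of projections (\cite[Chapter IV, Section 2]{K}) provides, for all large $n$, invertible operators $U_n$, $V_n\in\L(\X)$ with $U_n\to I$, $V_n\to I$, $U_n(\R(B))=\R(B_n)$ and $V_n(\N(C))=\N(C_n)$. A uniqueness argument of the type behind \cite[Lemma 1]{LYZW} then gives the rotation identity
$$
(A_n)^{(2)}_{\R(B_n),\,\N(C_n)}=U_n\,(V_n^{-1}A_nU_n)^{(2)}_{\R(B),\,\N(C)}\,V_n^{-1},
$$
because $U_nMV_n^{-1}$ is an outer inverse of $A_n$ with range $U_n(\R(B))$ and null space $V_n(\N(C))$ whenever $M$ is an outer inverse of $V_n^{-1}A_nU_n$ with range $\R(B)$ and null space $\N(C)$. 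Putting $\widetilde{A}_n=V_n^{-1}A_nU_n$, we have $\widetilde{A}_n\to A$, hence $\parallel X\parallel\,\parallel\widetilde{A}_n-A\parallel<1$ for $n$ large; then \cite[Lemma 3.4]{DX}, applied with the fixed pair $(\R(B),\N(C))$, shows that $(\widetilde{A}_n)^{(2)}_{\R(B),\N(C)}$ exists and equals $(I+X(\widetilde{A}_n-A))^{-1}X$, which converges to $X$. Therefore $X_n=(A_n)^{(2)}_{\R(B_n),\N(C_n)}=U_n(\widetilde{A}_n)^{(2)}_{\R(B),\N(C)}V_n^{-1}\to I\cdot X\cdot I=X$, i.e. $(i)$ holds.

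The step I expect to be the main obstacle is this last argument, and within it the two delicate points: extracting the conjugating operators $U_n$, $V_n$ converging to the identity from mere gap-convergence of the subspaces, and checking that \cite[Lemma 3.4]{DX} genuinely applies for large $n$ (in particular that $(\widetilde{A}_n)^{(2)}_{\R(B),\N(C)}$ is the operator delivered by the Neumann-series formula there). Everything else is bookkeeping.
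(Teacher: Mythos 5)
Your bookkeeping part is fine and essentially the paper's: the identifications $\R(X_n)=\R(B_n)$, $\N(X_n)=\N(C_n)$ via Theorem \ref{thm3.2}, the idempotents $X_nA_n$, $A_nX_n$, and the passage from norm convergence of idempotents to gap convergence (the paper cites \cite[Lemma 3.3]{kr1} where you give the elementary estimate $\hat{\delta}(\R(E),\R(F))\le \parallel E-F\parallel$; both are correct). The divergence is in the only substantial implication, (v)$\Rightarrow$(i). The paper does not freeze the subspaces: it invokes the Du--Xue perturbation theorem \cite[Theorem 3.5]{DX}, which is formulated exactly for $A^{(2)}_{\T,\S}$ with \emph{perturbed} range and null space, and with $\kappa=\parallel A\parallel\parallel A^{-(B,C)}\parallel$ it yields, once $\hat{\delta}(\N(C_n),\N(C))$, $\hat{\delta}(\R(B_n),\R(B))$ and $\parallel A^{-(B,C)}\parallel\parallel A-A_n\parallel$ are under explicit thresholds, a quantitative bound on $\parallel A_n^{-(B_n,C_n)}-A^{-(B,C)}\parallel$ that tends to $0$. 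Your conjugation scheme is a re-derivation of a qualitative version of that theorem from the fixed-subspace result \cite[Lemma 3.4]{DX}; the rotation identity itself and the final limit are correct and routine, given the operators $U_n$, $V_n$.

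The genuine gap is precisely the step you flag: obtaining invertible $U_n\to I$, $V_n\to I$ with $U_n(\R(B))=\R(B_n)$ and $V_n(\N(C))=\N(C_n)$ from gap convergence alone. Kato's similarity construction for pairs of projections requires two idempotents close \emph{in norm}; \cite[Chapter 2, Section 2]{K} does not give you, in a general Banach space, bounded projections onto $\R(B_n)$ converging in norm to a fixed projection onto $\R(B)$ merely because $\hat{\delta}(\R(B_n),\R(B))\to 0$ (the available projections $X_nA_n$ need not converge, nor even be bounded, under hypothesis (v)). To close this you must transfer the fixed projection: with $P=A^{-(B,C)}A$, show that for small $\hat{\delta}(\R(B_n),\R(B))$ the restriction $P\mid_{\R(B_n)}\colon\R(B_n)\to\R(B)$ is bounded below (each unit vector of $\R(B_n)$ is close to $\R(B)$) and onto (a Riesz-lemma argument, since each unit vector of $\R(B)$ is close to the closed subspace $P(\R(B_n))$); then $\Pi_n=(P\mid_{\R(B_n)})^{-1}P$ is an idempotent with range $\R(B_n)$ and $\parallel \Pi_n-P\parallel\to 0$, and only now does Kato's $U_n=\Pi_n P+(I-\Pi_n)(I-P)$ apply; the null-space side needs the analogous argument with $I-AA^{-(B,C)}$. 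This is fillable, so your architecture does produce a proof, but as written the conversion of gap convergence into norm convergence of projections --- which is the whole content of the implication --- is asserted by citation rather than proved, whereas the paper's single appeal to \cite[Theorem 3.5]{DX} settles it immediately.
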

\begin{proof}It is evident that statement (i) implies statement (ii).\par

\indent Observe that, since $A^{-(B, C)}$ is an outer inverse of $A$ and $A_n^{-(B_n, C_n)}$ is an outer inverse of $A_n$ ($n\in\mathbb{N}$),
according to Theorem \ref{thm3.1},
\begin{align*}
&\R(A^{-(B, C)}A)=\R(A^{-(B, C)})=\R(B),& &\N(AA^{-(B, C)})=\N(A^{-(B, C)})=\N(C),&\\
&\R(A_n^{-(B_n, C_n)}A_n)=\R(A_n^{-(B_n, C_n)})=\R(B_n),& &\N(A_nA_n^{-(B_n, C_n)})=\N(A_n^{-(B_n, C_n)})=\N(C_n).&
\end{align*}
Consequently, according to \cite[Lemma 3.3]{kr1}, statment (ii) implies statement (iii), which in turn implies statement (v).
In addition, applying again \cite[Lemma 3.3]{kr1}, statement (ii) also implies statement (iv), which in turn implies statement (v). Note also that statement (vi) 
is an equivalent formulation of statement (v) (Theorem \ref{thm3.2}).

\indent Suppose that statement (vi) holds. According to Theorem \ref{thm3.2}, 
\begin{align*}
&A_n^{-(B_n, C_n)}={A_n}^{(2)}_{\R(B_n), N(C_n)},& &A^{-(B, C)}=A^{(2)}_{\R(B), N(C)}.&
\end{align*} 
\noindent Let $\kappa=\parallel A\parallel\parallel A^{-(B, C)}\parallel$ and consider $n_0\in\mathbb{N}$ such that
for all $n\ge n_0$, 
\begin{align*}
&u_n=\hat{\delta}(\N(C_n), \N(C))<\frac{1}{3+\kappa},&
&v_n=\hat{\delta}(\R(B_n), \R(B))<\frac{1}{(1+\kappa)^2},&\\
&z_n=\parallel A^{-(B, C)}\parallel\parallel A-A_n\parallel<\frac{2\kappa}{(1+\kappa)(4+\kappa)}.& & &\\
\end{align*}

\noindent Thus, according to \cite[Theorem 3.5]{DX},
$$
\parallel A_n^{-(B_n, C_n)}-A^{-(B, C)}\parallel \le\frac{(1+\kappa)(v_n+u_n)+(1+u_n)z_n}{1-(1+\kappa) v_n-\kappa u_n-(1+u_n)z_n}\parallel A^{-(B, C)}\parallel,
$$
which implies statement (i).
\end{proof}

Next the Banach algebra case will be considered.

\begin{thm}\label{thm5.3}Let $\A$ be a unitary Banach algebra and consider $a\in\A$ and $b$, $c\in\hat{\A}$ such that $a$ is $(b, c)$-invertible and $a^{-(b, c)}\neq 0$.
Suppose that there exist three sequences $(a_n)_{n\in\mathbb{N}}\subset\A$ and  $(b_n)_{n\in\mathbb{N}}$, $(c_n)_{n\in\mathbb{N}}\subset \hat{\A}$ such that 
$a_n$ is $(b_n, c_n)$-invertible, for each $n\in\mathbb{N}$.
If  $(a_n)_{n\in\mathbb{N}}$ converges to $a$, then the following statements are equivalent.
\begin{enumerate}[{\rm (i)}] 
\item The sequence $(a_n^{-(b_n, c_n)})_{n\in\mathbb{N}}$ converges to $a^{-(b, c)}$.

\item  The sequences $(a_n^{-(b_n, c_n)}a_n)_{n\in\mathbb{N}}$  and $(a_na_n^{-(b_n, c_n)})_{n\in\mathbb{N}}$
converge to $a^{-(b, c)}a$ and $aa^{-(b, c)}$, respectively. 

\item  The sequence $(a_n^{-(b_n, c_n)}a_n)_{n\in\mathbb{N}}$ \rm(\it respectively $(\hat{\delta}((c_n)^{-1}(0), c^{-1}(0)))_{n\in\mathbb{N}}$\rm) \it
converges to $a^{-(b, c)}a$ \rm(\it respectively to 0\rm)\it.

\item  The sequence $(a_na_n^{-(b_n, c_n)})_{n\in\mathbb{N}}$ \rm(\it respectively  $(\hat{\delta}(b_n\A, b\A))_{n\in\mathbb{N}}$\rm) \it
converges to $aa^{-(b, c)}$ \rm(\it respectively to 0\rm)\it.

\item The sequences $(\hat{\delta}(b_n\A, b\A))_{n\in\mathbb{N}}$ and $(\hat{\delta}((c_n)^{-1}(0), c^{-1}(0)))_{n\in\mathbb{N}}$
converge to $0$.

\item  The sequences $(\hat{\delta}(a_n^{-(b_n, c_n)}\A, a^{-(b, c)}\A))_{n\in\mathbb{N}}$ and $(\hat{\delta}((a_n^{-(b_n, c_n)})^{-1}(0), (a^{-(b, c)})^{-1}(0)))_{n\in\mathbb{N}}$
converge to $0$.\par

\item The sequence $(a_na_n^{-(b_n, c_n)})_{n\in\mathbb{N}}$ \rm(\it respectively $(\hat{\delta}((b_n)_{-1}(0), b_{-1}(0)))_{n\in\mathbb{N}}$\rm) \it
converges to $aa^{-(b, c)}$ \rm(\it respectively to 0\rm)\it.

\item The sequence $(a_n^{-(b_n, c_n)}a_n)_{n\in\mathbb{N}}$ \rm(\it respectively $(\hat{\delta}(\A c_n, \A c))_{n\in\mathbb{N}}$\rm) \it
converges to $a^{-(b, c)}a$ \rm(\it respectively to 0\rm)\it.

\item The sequences $(\hat{\delta}(\A c_n, \A c))_{n\in\mathbb{N}}$ and $(\hat{\delta}((b_n)_{-1}(0), b_{-1}(0)))_{n\in\mathbb{N}}$
converge to $0$.

\item  The sequences $(\hat{\delta}(\A a_n^{-(b_n, c_n)}, \A a^{-(b, c)}))_{n\in\mathbb{N}}$ and $(\hat{\delta}((a_n^{-(b_n, c_n)})_{-1}(0), (a^{-(b, c)})_{-1}(0)))_{n\in\mathbb{N}}$
converge to $0$.\par
\end{enumerate}
\end{thm}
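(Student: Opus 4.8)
The plan is to transfer the entire statement, via the regular representations $x\mapsto L_x$ and $x\mapsto R_x$, to the Banach space operator situation already settled in Theorem \ref{thm5.2}, applied to $\L(\A)$. Recall that since $\A$ is unitary, $\parallel L_x\parallel=\parallel x\parallel=\parallel R_x\parallel$, so $L$ is an isometric homomorphism and $R$ an isometric anti\-homomorphism; in particular a sequence in $\A$ converges if and only if the corresponding sequence of left multiplications (equivalently, of right multiplications) converges, and $a^{-(b,c)}\neq 0$ forces $L_{a^{-(b,c)}}\neq 0$ and $R_{a^{-(b,c)}}\neq 0$. Moreover, if $b$ is regular, say $b=byb$, then $L_b=L_bL_yL_b$ and $R_b=R_bR_yR_b$, so all the operators $L_b,L_c,L_{b_n},L_{c_n}$ and $R_b,R_c,R_{b_n},R_{c_n}$ are regular.

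First I would establish the equivalence of statements (i)--(vi). By Proposition \ref{pro3.10}(i), $L_a$ is $(L_b,L_c)$-invertible with $(L_a)^{-(L_b,L_c)}=L_{a^{-(b,c)}}=(L_a)^{(2)}_{b\A,c^{-1}(0)}\neq 0$, each $L_{a_n}$ is $(L_{b_n},L_{c_n})$-invertible with $(L_{a_n})^{-(L_{b_n},L_{c_n})}=L_{a_n^{-(b_n,c_n)}}$, and $(L_{a_n})$ converges to $L_a$; hence Theorem \ref{thm5.2} applies to $L_a,L_b,L_c$ and the sequences $(L_{a_n}),(L_{b_n}),(L_{c_n})$. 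Translating its six statements back to $\A$ by means of $L_{xy}=L_xL_y$, $\R(L_z)=z\A$ and $\N(L_z)=z^{-1}(0)$, statement (i) of Theorem \ref{thm5.2} becomes statement (i) here; statement (ii) becomes statement (ii) (the products $A_n^{-(B_n,C_n)}A_n$ and $A_nA_n^{-(B_n,C_n)}$ become $L_{a_n^{-(b_n,c_n)}a_n}$ and $L_{a_na_n^{-(b_n,c_n)}}$); statement (iii) becomes statement (iii) since $\N(L_{c_n})=(c_n)^{-1}(0)$; statement (iv) becomes statement (iv) since $\R(L_{b_n})=b_n\A$; statement (v) becomes statement (v); and statement (vi), on writing $\R(L_{a_n^{-(b_n,c_n)}})=a_n^{-(b_n,c_n)}\A$ and $\N(L_{a_n^{-(b_n,c_n)}})=(a_n^{-(b_n,c_n)})^{-1}(0)$, becomes statement (vi). This yields the equivalence of (i)--(vi).

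For the remaining statements (vii)--(x) I would repeat the argument with right multiplications, now using Proposition \ref{pro3.10}(iii): $R_a$ is $(R_c,R_b)$-invertible (note that $b$ and $c$ have exchanged roles) with $(R_a)^{-(R_c,R_b)}=R_{a^{-(b,c)}}=(R_a)^{(2)}_{\A c,b_{-1}(0)}\neq 0$, each $R_{a_n}$ is $(R_{c_n},R_{b_n})$-invertible with inverse $R_{a_n^{-(b_n,c_n)}}$, and $(R_{a_n})$ converges to $R_a$. Applying Theorem \ref{thm5.2} to $R_a,R_c,R_b$ and $(R_{a_n}),(R_{c_n}),(R_{b_n})$, and using $R_{xy}=R_yR_x$, $\R(R_z)=\A z$, $\N(R_z)=z_{-1}(0)$ together with $R_{a_n^{-(b_n,c_n)}}R_{a_n}=R_{a_na_n^{-(b_n,c_n)}}$ and $R_{a_n}R_{a_n^{-(b_n,c_n)}}=R_{a_n^{-(b_n,c_n)}a_n}$, its statements translate as follows: statement (i) again becomes statement (i) here (so the two chains of equivalences meet); statement (iii) becomes statement (vii), since the relevant product is $R_{a_na_n^{-(b_n,c_n)}}$ and $\N(R_{b_n})=(b_n)_{-1}(0)$; statement (iv) becomes statement (viii), since the product is $R_{a_n^{-(b_n,c_n)}a_n}$ and $\R(R_{c_n})=\A c_n$; statement (v) becomes statement (ix); and statement (vi), with $\R(R_{a_n^{-(b_n,c_n)}})=\A a_n^{-(b_n,c_n)}$ and $\N(R_{a_n^{-(b_n,c_n)}})=(a_n^{-(b_n,c_n)})_{-1}(0)$, becomes statement (x). Since statement (i) appears in both translations, all ten statements are equivalent.

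The main point to get right is not a genuine obstacle but the bookkeeping for the anti\-homomorphism $R$: it interchanges $b_n$ with $c_n$, sends ranges to left ideals and null spaces to right annihilators, and swaps the two one\-sided products $a_n^{-(b_n,c_n)}a_n$ and $a_na_n^{-(b_n,c_n)}$; one must also verify that the hypotheses of Theorem \ref{thm5.2} transfer (regularity of the one\-sided multiplications, $(B,C)$-invertibility through Proposition \ref{pro3.10}, non\-vanishing of the inverse, and convergence of the ambient sequence), all of which follow at once from $\parallel L_x\parallel=\parallel x\parallel=\parallel R_x\parallel$ and Proposition \ref{pro3.10}.
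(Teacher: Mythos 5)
Your proposal is correct and is essentially the paper's own argument: Proposition \ref{pro3.10} (i) and (iii) are used to transfer the problem to $L_a$ and $R_a$ respectively, Theorem \ref{thm5.2} is applied to each, and the isometry $\parallel L_x\parallel=\parallel x\parallel=\parallel R_x\parallel$ carries the convergences back to $\A$. Your bookkeeping for the anti-homomorphism (swapping $b$ with $c$, the two one-sided products, ranges with left ideals and null spaces with right annihilators) matches the paper's "similar argument" for statements (vii)--(x).
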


\begin{proof} Recall that, according to Proposition \ref{pro3.10} (i), 
\begin{align*}
&L_{a^{-(b, c)}}=L_a^{-(L_b, L_c)}=(L_a)^{(2)}_{b\A, c^{-1}(0)},& &L_{a_n^{-(b_n, c_n)}}=L_{a_n}^{-(L_{b_n}, L_{c_n})}=(L_{a_n})^{(2)}_{b_n\A, {c_n}^{-1}(0)},&\\
\end{align*}
for each $n\in\mathbb{N}$. In addition, note that since $\A$ is a unitary Banach algebra, $(a_n)_{n\in\mathbb{N}}$ converges to $a$
if and only if $(L_{a_n})_{n\in\mathbb{N}}$ converges to $L_a$. Similarly,  a necessary and sufficient condition for $(a_n^{-(b_n, c_n)})_{n\in\mathbb{N}}$ (respectively for 
$(a_n^{-(b_n, c_n)}a_n)_{n\in\mathbb{N}}$ and  $(a_na_n^{-(b_n, c_n)})_{n\in\mathbb{N}}$)
to converge to $a^{-(b, c)}$ (respectively to $a^{-(b, c)}a$ and  $aa^{-(b, c)}$) is that $(L_{a_n^{-(b_n, c_n)}})_{n\in\mathbb{N}}$ (respectively $(L_{a_n^{-(b_n, c_n)}}L_{a_n})_{n\in\mathbb{N}}$ and
 $(L_{a_n}L_{a_n^{-(b_n, c_n)}})_{n\in\mathbb{N}}$)
 converges to $L_{a^{-(b, c)}}$ (respectively to $L_{a^{-(b, c)}}L_a$ and $L_aL_{a^{-(b, c)}}$).

Now, to prove the equivalence between statements (i)-(vi), apply  Theorem \ref{thm5.2} to $L_a$ and $(L_{a_n})_{n\in\mathbb{N}}$.
Recall that according to the proof of Proposition \ref{pro3.10}, $c^{-1}(0)=(a^{-(b, c)})^{-1}(0)$ and $c_n^{-1}(0)=(a_n^{-(b_n, c_n)})^{-1}(0)$.

A similar argument, using in particular Proposition \ref{pro3.10} (iii) and Theorem \ref{thm5.2}, proves the equivalence between statements (i), (ii) and  (vii)-(x).
\end{proof}
Next the continuity of the $(b, c)$-inverse will be characterized using the Moore-Penrose inverse.

\begin{thm}\label{thm5.4}Let $\A$ be a unitary Banach algebra and consider $a\in\A$ and $b$, $c\in\A^\dag$ such that $a$ is $(b, c)$-invertible and $a^{-(b, c)}\neq 0$.
Suppose that there exist three sequences $(a_n)_{n\in\mathbb{N}}\subset\A$ and  $(b_n)_{n\in\mathbb{N}}$, $(c_n)_{n\in\mathbb{N}}\subset \A^\dag$ such that 
$a_n$ is $(b_n, c_n)$-invertible, for each $n\in\mathbb{N}$.
If  $(a_n)_{n\in\mathbb{N}}$ converges to $a$, then the following statements are equivalent.
\begin{enumerate}[{\rm (i)}] 
\item The sequence $(a_n^{-(b_n, c_n)})_{n\in\mathbb{N}}$ converges to $a^{-(b, c)}$.

\item  The sequence $(a_n^{-(b_n, c_n)}a_n)_{n\in\mathbb{N}}$ converges to $a^{-(b, c)}a$ and the sequences 
\begin{align*}
&(c^\dag c(\uno-c_n^\dag c_n))_{n\in\mathbb{N}},&
&(c_n^\dag c_n (\uno -c^\dag c))_{n\in\mathbb{N}}\\
\end{align*}
converge to 0.

\item  The sequence $(a_na_n^{-(b_n, c_n)})_{n\in\mathbb{N}}$ converges to $aa^{-(b, c)}$ and the sequences 
\begin{align*}
&((\uno-bb^\dag)b_nb_n^\dag)_{n\in\mathbb{N}},& &( (\uno-b_nb_n^\dag)bb^\dag)_{n\in\mathbb{N}}&\\
\end{align*}
converge to 0.

\item The sequences 
\begin{align*}
&( (\uno-bb^\dag)b_nb_n^\dag)_{n\in\mathbb{N}},& &( (\uno-b_nb_n^\dag)bb^\dag)_{n\in\mathbb{N}},&\\ 
&( c^\dag c(\uno-c_n^\dag c_n))_{n\in\mathbb{N}},& 
&( c_n^\dag c_n (\uno -c^\dag c))_{n\in\mathbb{N}}&\\  
\end{align*}
converge to 0.

\item The sequence $(a_na_n^{-(b_n, c_n)})_{n\in\mathbb{N}}$ converges to $aa^{-(b, c)}$  and the sequences
\begin{align*}
&( bb^\dag (\uno-b_nb_n^\dag))_{n\in\mathbb{N}},& &( b_nb_n^\dag (\uno-bb^\dag))_{n\in\mathbb{N}}&\\
\end{align*}
converge to 0.

\item The sequence $(a_n^{-(b_n, c_n)}a_n)_{n\in\mathbb{N}}$ converge to $a^{-(b, c)}a$ and the sequences 
\begin{align*}
&(  (\uno -c^\dag c)c_n^\dag c_n)_{n\in\mathbb{N}},& &(  (\uno -c_n^\dag c_n)c^\dag c)_{n\in\mathbb{N}}&\\
\end{align*}
converge to 0.

\item The sequences 
\begin{align*}
&( bb^\dag (\uno-b_nb_n^\dag))_{n\in\mathbb{N}},&  &( b_nb_n^\dag (\uno-bb^\dag))_{n\in\mathbb{N}},&\\
&(  (\uno -c^\dag c)c_n^\dag c_n)_{n\in\mathbb{N}},& &(  (\uno -c_n^\dag c_n)c^\dag c)_{n\in\mathbb{N}}&\\
\end{align*}
converge to $0$.

\item The sequences $(b_nb_n^\dag)_{n\in\mathbb{N}}$ and $(c_n^\dag c_n)_{n\in\mathbb{N}}$ converge to $bb^\dag$ and $c^\dag c$, respectively. 
\end{enumerate}
\end{thm}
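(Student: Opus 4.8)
The plan is to deduce all eight conditions from Theorem~\ref{thm5.3} by rewriting its two gap conditions in terms of the Moore--Penrose idempotents. The starting point is that, since $b,c\in\A^\dag$, the closed subspaces occurring there are ranges of hermitian idempotents of $\A$: one has $b\A=bb^\dag\A$ and $c^{-1}(0)=\N(L_c)=\N(L_{c^\dag c})=(\uno-c^\dag c)\A$, and likewise with $b_n,c_n$ in place of $b,c$ (and, by Theorem~\ref{thm3.1} and the proof of Proposition~\ref{pro3.10}, with $a^{-(b,c)}$ in place of $b$), the elements $bb^\dag,\ \uno-c^\dag c,\ b_nb_n^\dag,\ \uno-c_n^\dag c_n$ being hermitian idempotents. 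So I would first prove a gap lemma: for hermitian idempotents $e,f\in\A$,
\[
\hat{\delta}(e\A,f\A)=\max\{\|(\uno-f)e\|,\ \|(\uno-e)f\|\}.
\]
The only input is that a hermitian idempotent and its complement have norm at most $1$ (immediate from $\|\uno-2e\|=\|\exp(i\pi e)\|=1$), after which the standard distance estimates for the gap give the two inequalities. Substituting $e\A=b\A$ and so on yields
\begin{align*}
\hat{\delta}(b_n\A,b\A)&=\max\{\|(\uno-bb^\dag)b_nb_n^\dag\|,\ \|(\uno-b_nb_n^\dag)bb^\dag\|\},\\
\hat{\delta}(c_n^{-1}(0),c^{-1}(0))&=\max\{\|c^\dag c(\uno-c_n^\dag c_n)\|,\ \|c_n^\dag c_n(\uno-c^\dag c)\|\}.
\end{align*}

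The second step deals with the circumstance that statements (ii)--(vii) involve the products of these idempotents written on both sides. For idempotents $e,f$ one has
\[
(e-f)^2=(\uno-e)f+(\uno-f)e=e(\uno-f)+f(\uno-e),
\]
and, since a difference of two hermitian elements is hermitian, $e-f$ is hermitian; by Sinclair's theorem ($r(h)=\|h\|$ for hermitian $h$, \cite{BD}), $\|e-f\|^2=\|(e-f)^2\|$. Hence for hermitian idempotents the three statements ``$\|(\uno-e)f\|\to0$ and $\|(\uno-f)e\|\to0$'', ``$\|e(\uno-f)\|\to0$ and $\|f(\uno-e)\|\to0$'', and ``$\|e-f\|\to0$'' are equivalent. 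Applied with $(e,f)=(b_nb_n^\dag,bb^\dag)$ and with $(e,f)=(c^\dag c,c_n^\dag c_n)$, this identifies the ``left-sided'' product conditions of (v),(vi),(vii) with the ``right-sided'' ones of (iii),(ii),(iv), and both of them with the convergences $b_nb_n^\dag\to bb^\dag$, $c_n^\dag c_n\to c^\dag c$ of (viii).

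Granting Steps~1 and~2, the rest is matching against Theorem~\ref{thm5.3}, whose statements (i), (iii), (iv), (v) are mutually equivalent. Step~1 turns the $c$-product condition of (ii) into $\hat{\delta}(c_n^{-1}(0),c^{-1}(0))\to0$ and the $b$-product condition of (iii) into $\hat{\delta}(b_n\A,b\A)\to0$, so (ii)$\Leftrightarrow$Theorem~\ref{thm5.3}(iii), (iii)$\Leftrightarrow$Theorem~\ref{thm5.3}(iv), (iv)$\Leftrightarrow$Theorem~\ref{thm5.3}(v). Step~2 makes the ``left-sided'' product conditions of (vi),(v),(vii) coincide with the ``right-sided'' ones of (ii),(iii),(iv), so (vi)$\Leftrightarrow$(ii), (v)$\Leftrightarrow$(iii), (vii)$\Leftrightarrow$(iv); and Steps~1 and~2 together identify (viii) with Theorem~\ref{thm5.3}(v) as well. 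Since (i) is Theorem~\ref{thm5.3}(i), all eight conditions are equivalent.

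The hard part is Step~2. For arbitrary idempotents in a Banach algebra the four products $(\uno-e)f$, $(\uno-f)e$, $e(\uno-f)$, $f(\uno-e)$ are genuinely incomparable, so the equivalence of the ``left'' and ``right'' Moore--Penrose formulations is not a formality; it is exactly the hermitian structure, used through $\|h^2\|=\|h\|^2$, that rescues it. The remaining point needing care is purely organizational: one must keep track of which subspace from Theorem~\ref{thm5.3}---the right ideals $b\A$, $c^{-1}(0)$ or the left ideals $\A c$, $b_{-1}(0)$---produces which product, and it is cleanest to route everything through the right-ideal statements (iii)--(v) of Theorem~\ref{thm5.3} and through statement (viii), so that only the right-ideal form of the gap lemma is needed.
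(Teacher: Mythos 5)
Your proof is correct, and its backbone --- rewriting the gap conditions of Theorem \ref{thm5.3} in terms of the hermitian idempotents $bb^\dag$, $c^\dag c$, $b_nb_n^\dag$, $c_n^\dag c_n$ --- is exactly what the paper does, except that the paper simply quotes your Step 1 as \cite[Lemma 2.2]{V2} instead of reproving it. Where you genuinely diverge is in the treatment of statements (v)--(viii). The paper never compares left-sided and right-sided products directly: it obtains (v), (vi), (vii) from the left-ideal half of Theorem \ref{thm5.3} (its statements (vii)--(x), i.e.\ the gaps $\hat{\delta}(\A c_n,\A c)$ and $\hat{\delta}((b_n)_{-1}(0),b_{-1}(0))$, which were themselves proved via the right-multiplication operators and Proposition \ref{pro3.10} (iii)), and it proves (i) $\Rightarrow$ (viii) by the decomposition
$b_nb_n^\dag=bb^\dag b_nb_n^\dag bb^\dag+bb^\dag b_nb_n^\dag(\uno-bb^\dag)+(\uno-bb^\dag)b_nb_n^\dag bb^\dag+(\uno-bb^\dag)b_nb_n^\dag(\uno-bb^\dag)$
combined with statements (iv) and (vii), while (viii) $\Rightarrow$ (vii) is immediate. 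You instead use the identity $(e-f)^2=(\uno-e)f+(\uno-f)e=e(\uno-f)+f(\uno-e)$ together with Sinclair's theorem $\|h\|=r(h)$ for hermitian $h$ to show that, for hermitian idempotents, the left-sided conditions, the right-sided conditions and the norm convergence $e_n\to f$ are all equivalent; this disposes of (v)--(viii) in one stroke, needs only the right-ideal statements (iii)--(v) of Theorem \ref{thm5.3}, and gives (iv) $\Leftrightarrow$ (viii) directly rather than through (i). The price is the extra functional-analytic input (real-linearity of the hermitian elements and Sinclair's theorem, available in \cite{BD}), which the paper's more pedestrian decomposition argument avoids; the gain is symmetry, no $R_a$-side bookkeeping, and a sharper local statement about pairs of hermitian idempotents. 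Both routes are complete and rigorous.
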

\begin{proof}Note that 
\begin{align*}
&b\A= bb^\dag A,& &b_n\A=b_nb_n^\dag\A,& &c^{-1}(0)=(\uno-c^\dag c)\A,& &c_n^{-1}(0)=(\uno-c_n^\dag c_n)\A,&\\
&\A c=\A c^\dag c,& &\A c_n=\A c_n^\dag c,& &b_{-1}(0)=\A (\uno-bb^\dag),& &(b_n)_{-1}(0)=\A (\uno-b_nb_n^\dag).& \\
\end{align*}
Since $\A$ is a unitary Banach algebra, according to \cite[Lemma 2.2]{V2},
\begin{align*}
&\hat{\delta}(b_n\A, b\A)=\hbox{\rm max}\{ \parallel (\uno-bb^\dag)b_nb_n^\dag\parallel,  \parallel (\uno-b_nb_n^\dag)bb^\dag\parallel\},&\\
&\hat{\delta}((c_n)^{-1}(0), c^{-1}(0))=\hbox{\rm max}\{ \parallel c^\dag c(\uno-c_n^\dag c_n)\parallel,  \parallel c_n^\dag c_n (\uno -c^\dag c\parallel\},&\\
&\hat{\delta}((b_n)_{-1}(0), b_{-1}(0))=\hbox{\rm max}\{ \parallel bb^\dag (\uno-b_nb_n^\dag)\parallel,  \parallel b_nb_n^\dag (\uno-bb^\dag)\}\parallel\},&\\
&\hat{\delta}(\A c_n, \A c)=\hbox{\rm max}\{\parallel  (\uno -c^\dag c)c_n^\dag c_n\parallel, \parallel  (\uno -c_n^\dag c_n)c^\dag c\parallel\}.&\\
\end{align*}

 \indent Now, to prove the equivalence among statements (i)-(vii), apply Theorem \ref{thm5.3} and use the identities that have been proved.

Suppose that statement (i) holds. Observe that
\begin{align*}
b_nb_n^\dag&=bb^\dag b_nb_n^\dag bb^\dag + bb^\dag b_nb_n^\dag (\uno-bb^\dag) + (\uno-bb^\dag) b_nb_n^\dag bb^\dag+ (\uno-bb^\dag)b_nb_n^\dag(\uno-bb^\dag).\\
\end{align*}
Thus, according to statements (iv) and (vii), if 
$$
f_n=bb^\dag b_nb_n^\dag (\uno-bb^\dag) + (\uno-bb^\dag) b_nb_n^\dag bb^\dag+ (\uno-bb^\dag)b_nb_n^\dag(\uno-bb^\dag),
$$
then the sequence $(f_n)_{n\in\mathbb{N}}$ converges to 0. In addition, according to statement (vii), the sequence $(bb^\dag (\uno-b_nb_n^\dag)bb^\dag)_{n\in\mathbb{N}}$ converges to 0, which implies that $(bb^\dag b_nb_n^\dag bb^\dag)_{n\in\mathbb{N}}$
convergs to $bb^\dag$. Therefore, $(b_nb_n^\dag)_{n\in\mathbb{N}}$ converges to $bb^\dag$. A similar argument proves that  $(c_n^\dag c_n)_{n\in\mathbb{N}}$ converges to $c^\dag c$.

\indent It is evident that statement (viii) implies statement (vii).
\end{proof}

In the following corollary, a particular case will be presented.

\begin{cor}\label{cor5.5}
Let $\A$ be a unitary Banach algebra and consider $a\in\A$ and $b$, $c\in\A^\dag$ such that $a$ is $(b, c)$-invertible and $a^{-(b, c)}\neq 0$.
Suppose that there exist three sequences $(a_n)_{n\in\mathbb{N}}\subset\A$ and  $(b_n)_{n\in\mathbb{N}}$, $(c_n)_{n\in\mathbb{N}}\subset \A^\dag$ such that 
$a_n$ is $(b_n, c_n)$-invertible, for each $n\in\mathbb{N}$. Suppose, in addition, that the sequences $(b_n)_{n\in\mathbb{N}}$, $(c_n)_{n\in\mathbb{N}}$,
$(b_n^\dag)_{n\in\mathbb{N}}$ and $(c_n^\dag)_{n\in\mathbb{N}}$ converge to $b$, $c$, $b^\dag$ and $c^\dag$, respectively.
Then, if  $(a_n)_{n\in\mathbb{N}}$ converges to $a$, the sequence $(a_n^{-(b_n, c_n)})_{n\in\mathbb{N}}$ converges to $a^{-(b, c)}$.
\end{cor}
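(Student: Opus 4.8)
The plan is to deduce this directly from Theorem \ref{thm5.4}, specifically from the equivalence between statements (i) and (viii) therein. Recall that statement (viii) of Theorem \ref{thm5.4} asserts that the sequences $(b_nb_n^\dag)_{n\in\mathbb{N}}$ and $(c_n^\dag c_n)_{n\in\mathbb{N}}$ converge to $bb^\dag$ and $c^\dag c$, respectively. Since multiplication in a Banach algebra is jointly continuous, the hypotheses $b_n\to b$, $b_n^\dag\to b^\dag$, $c_n\to c$ and $c_n^\dag\to c^\dag$ immediately yield $b_nb_n^\dag\to bb^\dag$ and $c_n^\dag c_n\to c^\dag c$; hence statement (viii) of Theorem \ref{thm5.4} holds.

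First I would check that all the standing hypotheses of Theorem \ref{thm5.4} are in force, namely: $b,c\in\A^\dag$, $a$ is $(b,c)$-invertible with $a^{-(b,c)}\neq 0$, $(b_n)_{n\in\mathbb{N}},(c_n)_{n\in\mathbb{N}}\subset\A^\dag$, each $a_n$ is $(b_n,c_n)$-invertible, and $(a_n)_{n\in\mathbb{N}}$ converges to $a$. All of these are exactly the assumptions of the corollary, so Theorem \ref{thm5.4} applies verbatim. Having established statement (viii) above, the implication (viii)$\Rightarrow$(i) of Theorem \ref{thm5.4} gives that $(a_n^{-(b_n,c_n)})_{n\in\mathbb{N}}$ converges to $a^{-(b,c)}$, which is the assertion.

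There is essentially no obstacle here: the corollary is a straightforward specialization of Theorem \ref{thm5.4}, the only input being the joint continuity of the product. The one point worth a brief comment is that the convergences $b_n^\dag\to b^\dag$ and $c_n^\dag\to c^\dag$ are assumed, not derived; in general the Moore-Penrose inverse is not continuous, so these hypotheses cannot be dispensed with, and it is precisely their presence that reduces the statement to the elementary computation above.
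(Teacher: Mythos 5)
Your proof is correct and follows the same route as the paper: the paper's proof is simply ``Apply Theorem \ref{thm5.4} (viii)'', and you have spelled out the only ingredient, namely that joint continuity of multiplication turns the hypotheses $b_n\to b$, $b_n^\dag\to b^\dag$, $c_n\to c$, $c_n^\dag\to c^\dag$ into statement (viii) of Theorem \ref{thm5.4}, whose equivalence with statement (i) gives the conclusion.
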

\begin{proof} Apply Theorem \ref{thm5.4} (viii).
\end{proof}

In the context of $C^*$-algebras, Theorem \ref{thm5.4} and Corollary \ref{cor5.5} can be reformulated as follows.

\begin{thm}\label{thm5.6} Let $\A$ be a $C^*$-algebra and consider $a\in\A$ and $b$, $c\in\hat{\A}$ such that $a$ is $(b, c)$-invertible and $a^{-(b, c)}\neq 0$.
Suppose that there exist three sequences $(a_n)_{n\in\mathbb{N}}\subset\A$ and  $(b_n)_{n\in\mathbb{N}}$, $(c_n)_{n\in\mathbb{N}}\subset\hat{\A}$ such that 
$a_n$ is $(b_n, c_n)$-invertible, for each $n\in\mathbb{N}$.
If  $(a_n)_{n\in\mathbb{N}}$ converges to $a$, then the following statements are equivalent.
\begin{enumerate}[{\rm (i)}] 
\item The sequence $(a_n^{-(b_n, c_n)})_{n\in\mathbb{N}}$ converges to $a^{-(b, c)}$.

\item  The sequences $(a_n^{-(b_n, c_n)}a_n)_{n\in\mathbb{N}}$  and $(c_n^\dag c_n)_{n\in\mathbb{N}}$ converge to $a^{-(b, c)}a$ and $c^\dag c$, respectively.

\item  The sequences $(a_na_n^{-(b_n, c_n)})_{n\in\mathbb{N}}$ and $(b_nb_n^\dag)_{n\in\mathbb{N}}$ converge to $aa^{-(b, c)}$ and 
$bb^\dag$, respectively.

\item The sequences $(b_nb_n^\dag)_{n\in\mathbb{N}}$ and $(c_n^\dag c_n)_{n\in\mathbb{N}}$ converge to $bb^\dag$ and $c^\dag c$, respectively.

\hspace*{\dimexpr\linewidth-\textwidth\relax}
\begin{minipage}[t]{\textwidth}
\noindent In addition, if the sequence $(b_n)_{n\in\mathbb{N}}$ converges to $b$, then statement {\rm (iii)} is equivalent to:
\end{minipage}
\item the sequences $(a_na_n^{-(b_n, c_n)})_{n\in\mathbb{N}}$ and $(b_n^\dag)_{n\in\mathbb{N}}$ converge to $aa^{-(b, c)}$ and 
$b^\dag$, respectively.

\hspace*{\dimexpr\linewidth-\textwidth\relax}
\begin{minipage}[t]{\textwidth}
\noindent  Moreover, if the sequence $(c_n)_{n\in\mathbb{N}}$ converge to $c$, then statement {\rm (ii)} is equivalent to:
\end{minipage}
\item the sequences $(a_n^{-(b_n, c_n)}a_n)_{n\in\mathbb{N}}$  and $(c_n^\dag )_{n\in\mathbb{N}}$ converge to $a^{-(b, c)}a$ and $c^\dag$, respectively.

\hspace*{\dimexpr\linewidth-\textwidth\relax}
\begin{minipage}[t]{\textwidth}
\noindent Furthermore, if the sequences $(b_n)_{n\in\mathbb{N}}$ and $(c_n)_{n\in\mathbb{N}}$ converge to $b$  and $c$, respectively, then statement {\rm (iv)} is equivalent to: 
\end{minipage}
\item the sequences $(b_n^\dag)_{n\in\mathbb{N}}$ and $(c_n^\dag )_{n\in\mathbb{N}}$ converge to $b^\dag$ and $c^\dag$, respectively.
\end{enumerate}
\end{thm}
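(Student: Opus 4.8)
The plan is to reduce everything to the already-proved Theorem~\ref{thm5.4}, treating the new equivalences (v), (vi), (viii) as refinements of (iii), (ii), (iv) that become available once the extra convergence hypotheses on $(b_n)$ and/or $(c_n)$ are imposed. The key structural fact I would use throughout is the same one invoked in the proof of Theorem~\ref{thm5.4}: in a $C^*$-algebra every regular element is Moore--Penrose invertible (\hspace{-1pt}\cite[Theorem 6]{HM1}), so $\hat{\A}=\A^\dag$ and all the expressions $b_nb_n^\dag$, $c_n^\dag c_n$ etc.\ make sense; moreover in a $C^*$-algebra $bb^\dag$ and $c^\dag c$ are the \emph{orthogonal} projectors onto $\R(L_b)=b\A$ and onto the complement of $c^{-1}(0)$, so one has the clean identities $P^*=P$ for these idempotents. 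With this in hand, statements (i)--(iv) here are literally the $C^*$ specializations of Theorem~\ref{thm5.4}: for instance, since $\parallel(\uno-bb^\dag)b_nb_n^\dag\parallel$ and $\parallel(\uno-b_nb_n^\dag)bb^\dag\parallel$ are, by self-adjointness of the projectors, the two halves of $\hat{\delta}(b_n\A,b\A)$, the convergence of all four sequences in Theorem~\ref{thm5.4}(iv) collapses to the single requirement that $(b_nb_n^\dag)\to bb^\dag$ and $(c_n^\dag c_n)\to c^\dag c$; one can see this directly by writing $b_nb_n^\dag-bb^\dag$ in the $2\times 2$ block form relative to $bb^\dag$ and $\uno-bb^\dag$ exactly as in the last paragraph of the proof of Theorem~\ref{thm5.4}. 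So the proof of the equivalence of (i)--(iv) is: invoke Theorem~\ref{thm5.4}, then use $\A=\A^\dag$ and the self-adjointness of the relevant projectors to see that the four-sequence conditions there each become the displayed two-sequence conditions here.

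**The conditional equivalences.** For (iii)$\Leftrightarrow$(v), (ii)$\Leftrightarrow$(vi) and (iv)$\Leftrightarrow$(viii) the extra input is the continuity of the Moore--Penrose inverse in $C^*$-algebras \emph{along a convergent sequence whose pseudoinverses are bounded or, more simply, under the hypothesis that the ranges converge}. Concretely: if $(b_n)\to b$ in a $C^*$-algebra, then $(b_nb_n^\dag)\to bb^\dag$ if and only if $(b_n^\dag)\to b^\dag$. One direction is trivial ($b_n^\dag=b_n^\dag(b_nb_n^\dag)$ and multiplication is continuous, so $(b_n^\dag)\to b^\dag$ forces $(b_nb_n^\dag)\to bb^\dag$); for the converse I would use the standard characterization (essentially \cite[Lemma 2.2]{V2} together with the $C^*$ identity $\A=\hat{\A}$, or a direct gap argument as in \cite{V2}) that for a convergent sequence of regular elements in a $C^*$-algebra, convergence of $b_n^\dag$ is equivalent to convergence of \emph{both} projectors $b_nb_n^\dag$ and $b_n^\dag b_n$; but in the $C^*$ setting convergence of one of these self-adjoint idempotents to the corresponding limit projector forces the gap $\hat{\delta}(b_n\A,b\A)\to 0$ and $\hat{\delta}(\A b_n,\A b)\to 0$ simultaneously, because $b_n\to b$ controls the ``other side'' automatically once $b_n$ is known to be regular with the right range. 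Plugging this equivalence into (iii), (ii), (iv) respectively yields (v), (vi), (viii). Symmetrically one argues with $c_n^\dag c_n$, $c_n^\dag$, $c$.

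**Order of the steps.** I would (1) record $\A=\A^\dag$ and the self-adjointness of $bb^\dag$, $c^\dag c$, $b_nb_n^\dag$, $c_n^\dag c_n$, and rewrite the four gap formulas from the proof of Theorem~\ref{thm5.4} in terms of these projectors; (2) apply Theorem~\ref{thm5.4} to get (i)$\Leftrightarrow$(ii)$\Leftrightarrow\cdots\Leftrightarrow$(iv) after the simplification in step (1); (3) prove the auxiliary lemma ``$b_n\to b$ in a $C^*$-algebra $\Rightarrow$ ($b_nb_n^\dag\to bb^\dag \Leftrightarrow b_n^\dag\to b^\dag$)'', and its $c$-side analogue; (4) feed this into (iii), (ii), (iv) to obtain the conditional equivalences (v), (vi), (viii).

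**Main obstacle.** The only nonroutine point is step (3): the forward direction ``$b_nb_n^\dag\to bb^\dag \Rightarrow b_n^\dag\to b^\dag$'' under the hypothesis $b_n\to b$. It is tempting but wrong to think the projector convergence alone suffices without some control on $\parallel b_n^\dag\parallel$; the hypothesis $b_n\to b$ is what saves us, and the cleanest way to exploit it is the perturbation estimate for $A^{(2)}_{\T,\S}$ from \cite[Theorem 3.5]{DX} (already used to prove Theorem~\ref{thm5.2}) applied with $A=b$, $\T=\R(b)$, $\S$ a fixed complement, together with the fact that in a $C^*$-algebra the relevant complements can be taken orthogonal so that ``$b_nb_n^\dag\to bb^\dag$'' really is ``$\hat{\delta}(\R(b_n),\R(b))\to 0$'' and ``$b_n^\dag b_n\to b^\dag b$'' comes for free from $b_n\to b$. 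So the obstacle is essentially bookkeeping about which gaps are controlled by which hypothesis, and it is handled by the same $C^*$-algebra self-adjointness trick that makes (i)--(iv) collapse in the first place; no genuinely new idea beyond Theorems~\ref{thm5.2}, \ref{thm5.4} and \cite[Lemma 2.2]{V2}, \cite[Theorem 3.5]{DX} is needed.
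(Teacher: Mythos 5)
Your handling of the equivalence of (i)--(iv) is essentially the paper's own argument and is correct: in a $C^*$-algebra $\hat{\A}=\A^\dag$, the identity $\parallel x\parallel=\parallel x^*\parallel$ applied to $(\uno-bb^\dag)b_nb_n^\dag$, $(\uno-b_nb_n^\dag)bb^\dag$ and their $c$-counterparts makes the four-product conditions of Theorem \ref{thm5.4} collapse pairwise, and the $2\times 2$ decomposition of $b_nb_n^\dag$ relative to $bb^\dag$ (the last paragraph of the proof of Theorem \ref{thm5.4}) turns them into $b_nb_n^\dag\to bb^\dag$ and $c_n^\dag c_n\to c^\dag c$.

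The conditional equivalences, however, contain a genuine gap. Everything reduces, as you say, to the lemma ``if $b_n\to b$ in a $C^*$-algebra, then $b_nb_n^\dag\to bb^\dag$ if and only if $b_n^\dag\to b^\dag$'', and you correctly flag its forward direction as the one nonroutine point; but your justification of it begs the question. The claim that ``$b_n^\dag b_n\to b^\dag b$ comes for free from $b_n\to b$'' is false: in $M_2(\mathbb{C})$ take $b=\mathrm{diag}(1,0)$ and $b_n=\mathrm{diag}(1,1/n)$; then $b_n\to b$ while $b_n^\dag b_n=I\not\to b^\dag b$. The implication actually required --- that $b_n\to b$ together with $b_nb_n^\dag\to bb^\dag$ forces $b_n^\dag b_n\to b^\dag b$, equivalently $\sup_n\parallel b_n^\dag\parallel<\infty$ --- is precisely the nontrivial content of the lemma, not bookkeeping. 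The proposed appeal to \cite[Theorem 3.5]{DX} is also mis-set-up: $b^\dag$ is the outer inverse $b^{(2)}_{\R(b^*),\,\N(b^*)}$, not $b^{(2)}_{\R(b),\,\S}$, so that perturbation bound needs smallness of \emph{both} $\hat{\delta}(\R(b_n^*),\R(b^*))$ (the null-space side) and $\hat{\delta}(\N(b_n^*),\N(b^*))$ (the range side), and only the latter is supplied by $b_nb_n^\dag\to bb^\dag$; similarly \cite[Lemma 2.2]{V2} and Theorem \ref{thm5.2} give the two-projection criterion, not the one-projection criterion. What closes this step is the $C^*$-specific theorem that, along a norm-convergent sequence of Moore--Penrose invertible elements, convergence of $b_n^\dag$ is equivalent to convergence of \emph{either one} of the projections $b_nb_n^\dag$, $b_n^\dag b_n$; this is \cite[Theorem 1.6]{kol}, which the paper simply invokes here. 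Either cite it or prove it (the proof goes through the uniform bound on $\parallel b_n^\dag\parallel$); the tools you list do not suffice by themselves.
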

\begin{proof}Note that since
\begin{align*}
&((\uno-bb^\dag)b_nb_n^\dag)^*=b_nb_n^\dag(\uno-bb^\dag),& &( (\uno-b_nb_n^\dag)bb^\dag)^*=bb^\dag(\uno-b_nb_n^\dag),&\\
\end{align*}
according to the proof of  statement (vii) implies statement (viii) in Theorem \ref{thm5.4}, statement (iii) is equivalent to Theorem \ref{thm5.4} (iii).

Now observe that, 
\begin{align*}
&(c^\dag c(\uno-c_n^\dag c_n))^*=(\uno-c_n^\dag c_n)c^\dag c,&
&(c_n^\dag c_n (\uno -c^\dag c))^*=(\uno -c^\dag c)c_n^\dag c_n.\\
\end{align*}
Then, a similar argument proves that statement (ii) is equivalent to Theorem \ref{thm5.4} (ii).

To prove statements (v)-(vii), apply \cite[Theorem 1.6]{kol}.
\end{proof}

Since the inverse along an element is a particular case of the $(b, c)$-inverse, it is possible to apply the results of this section 
to obtain characterizations of the continuity of the inverse along an element for Banach space operators and for Banach algebra and $C^*$-algebra elements.
In fact, given a Banach algebra (or a $C^*$-algebra) $\A$, $a\in\A$ and $d\in\hat{\A}$,
to state the aforementioned characterizations, it is enough to apply the corresponding result to the $(d, d)$-inverse (see section 2);
in the Banach operator case it is possible to proceed in a similar way. 
In order not to extend unnecessarily this work, these results will not be stated; the details are left to the interested reader.

To end this section, as an application, characterizations of the continuity of the Moore-Penrose inverse
in the contexts of Banch algebras and Banach space operators  will be given;  in the former frame, compare with \cite[Theorem 2.5]{V2}.

\begin{cor}\label{cor5.7}Let $\A$ be a unitary Banach algebra and consider $a\in\A^\dag\setminus 0$.
Suppose that there exists a sequence $(a_n)_{n\in\mathbb{N}}\subset\A^\dag$ such that 
$(a_n)_{n\in\mathbb{N}}$ converges to $a$. Then the following statements are equivalent.
\begin{enumerate}[{\rm (i)}] 
\item The sequence $(a_n^\dag)_{n\in\mathbb{N}}$ converges to $a^{\dag}$.

\item  The sequence $(a_n^\dag a_n)_{n\in\mathbb{N}}$ converges to $a^\dag a$ and the sequences 
\begin{align*}
&(a a^\dag(\uno-a_n a_n^\dag))_{n\in\mathbb{N}},&
&(a_n a_n^\dag (\uno -a a^\dag))_{n\in\mathbb{N}}\\
\end{align*}
converge to 0.

\item  The sequence $(a_na_n^\dag)_{n\in\mathbb{N}}$ converges to $aa^\dag$ and the sequences 
\begin{align*}
&((\uno-a^\dag a)a^\dag_na_n)_{n\in\mathbb{N}},& &( (\uno-a^\dag_na_n)a^\dag a)_{n\in\mathbb{N}}&\\
\end{align*}
converge to 0.

\item The sequences 
\begin{align*}
&( (\uno-a^\dag a)a^\dag_na_n)_{n\in\mathbb{N}},& &( (\uno-a_n^\dag a_n)a^\dag a)_{n\in\mathbb{N}},&\\ 
&( a a^\dag(\uno-a_n a_n^\dag))_{n\in\mathbb{N}},& 
&( a_n a_n^\dag (\uno -a a^\dag))_{n\in\mathbb{N}}&\\  
\end{align*}
converge to 0.

\item The sequence $(a_na_n^\dag)_{n\in\mathbb{N}}$ converges to $aa^\dag$  and the sequences
\begin{align*}
&( a^\dag a (\uno-a^\dag_na_n))_{n\in\mathbb{N}},& &( a^\dag_n a_n (\uno-a^\dag a))_{n\in\mathbb{N}}&\\
\end{align*}
converge to 0.

\item The sequence $(a_n^\dag a_n)_{n\in\mathbb{N}}$ converge to $a^\dag a$ and the sequences 
\begin{align*}
&(  (\uno -a a^\dag)a_n a_n^\dag)_{n\in\mathbb{N}},& &(  (\uno -a_n a_n^\dag)a a^\dag)_{n\in\mathbb{N}}&\\
\end{align*}
converge to 0.

\item The sequences 
\begin{align*}
&( a^\dag a (\uno-a^\dag_n a_n))_{n\in\mathbb{N}},&  &( a_n^\dag a_n (\uno-a^\dag a))_{n\in\mathbb{N}},&\\
&(  (\uno -a a^\dag)a_n a^\dag_n)_{n\in\mathbb{N}},& &(  (\uno -a_n a_n^\dag)a a^\dag)_{n\in\mathbb{N}}&\\
\end{align*}
converge to $0$.

\item The sequences $(a_n^\dag a_n)_{n\in\mathbb{N}}$ and $(a_n a^\dag_n)_{n\in\mathbb{N}}$ converge to $a^\dag a$ and $a a^\dag$, respectively. 
\end{enumerate}
\end{cor}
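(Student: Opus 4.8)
The plan is to reduce Corollary \ref{cor5.7} to the already-established Theorem \ref{thm5.4} by viewing the Moore--Penrose inverse $a^\dag$ as a $(b,c)$-inverse with a canonical choice of $b$ and $c$. First I would recall that whenever $a\in\A^\dag$, the element $a^\dag$ is a normalized generalized inverse of $a$ with $aa^\dag$ and $a^\dag a$ hermitian; a standard computation (e.g.\ as in \cite{V2,V3,B}) shows that $a^\dag$ is precisely the inverse of $a$ along $a^\dag$, equivalently $a^\dag=a^{-(a^\dag,a^\dag)}$, since $a^\dag\A=a^\dag a\A$ and $\A a^\dag=\A a a^\dag$ match the range/null-space data of $a^\dag$ itself. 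Thus I would set $b=c=a^\dag$ and $b_n=c_n=a_n^\dag$, observing that $b,c\in\A^\dag$ (indeed $(a^\dag)^\dag=a$) and that $a$ is $(b,c)$-invertible with $a^{-(b,c)}=a^\dag\neq 0$ by hypothesis; likewise each $a_n$ is $(b_n,c_n)$-invertible with $a_n^{-(b_n,c_n)}=a_n^\dag$.

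Next I would translate the eight conditions of Theorem \ref{thm5.4}, under this substitution, into the eight conditions of the corollary. The key observation is that $bb^\dag=a^\dag(a^\dag)^\dag=a^\dag a$ and $b^\dag b=(a^\dag)^\dag a^\dag=aa^\dag$; similarly $c^\dag c=aa^\dag$ and $cc^\dag=a^\dag a$, and the analogous identities hold with the subscript $n$. Substituting these into each of Theorem \ref{thm5.4}(i)--(viii) yields exactly the statements (i)--(viii) of the corollary: for instance, Theorem \ref{thm5.4}(iv) with its four sequences $(\uno-bb^\dag)b_nb_n^\dag$, $(\uno-b_nb_n^\dag)bb^\dag$, $c^\dag c(\uno-c_n^\dag c_n)$, $c_n^\dag c_n(\uno-c^\dag c)$ becomes $(\uno-a^\dag a)a_n^\dag a_n$, $(\uno-a_n^\dag a_n)a^\dag a$, $aa^\dag(\uno-a_na_n^\dag)$, $a_na_n^\dag(\uno-aa^\dag)$, which is the corollary's (iv); and Theorem \ref{thm5.4}(viii), ``$(b_nb_n^\dag)\to bb^\dag$ and $(c_n^\dag c_n)\to c^\dag c$'', becomes ``$(a_n^\dag a_n)\to a^\dag a$ and $(a_na_n^\dag)\to aa^\dag$'', the corollary's (viii). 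One also checks that $a_n^{-(b_n,c_n)}a_n=a_n^\dag a_n$ and $a_na_n^{-(b_n,c_n)}=a_na_n^\dag$, so the sequences built from the $(b,c)$-inverse coincide with those in the corollary.

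The only genuinely nontrivial point — and the one I would treat carefully rather than wave through — is the identity $a^\dag=a^{-(a^\dag,a^\dag)}$, i.e.\ that the Moore--Penrose inverse really is the inverse along itself, together with the fact that $a^\dag\in\A^\dag$ with $(a^\dag)^\dag=a$; everything else is a direct relabeling of Theorem \ref{thm5.4}. Once these are in hand, the proof is a one-line invocation: apply Theorem \ref{thm5.4} with $b=c=a^\dag$, $b_n=c_n=a_n^\dag$, and use the displayed identities $bb^\dag=a^\dag a$, $c^\dag c=aa^\dag$, etc., to rewrite each clause. I would therefore write the proof as: ``Since $a\in\A^\dag$, one has $a^\dag=a^{-(a^\dag,a^\dag)}$ and $a^\dag\in\A^\dag$; apply Theorem \ref{thm5.4} with $b=c=a^\dag$ and $b_n=c_n=a_n^\dag$, noting that $bb^\dag=a^\dag a$, $b^\dag b=aa^\dag$, $c^\dag c=aa^\dag$, $cc^\dag=a^\dag a$ and the corresponding identities for $b_n,c_n$.''
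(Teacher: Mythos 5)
Your proposal is correct and is essentially the paper's own proof: the paper likewise invokes $x^\dag=x^{-(x^\dag,x^\dag)}$ (via \cite[Proposition 6.1]{D}) together with $(x^\dag)^\dag=x$, and then applies Theorem \ref{thm5.4} with $b=c=a^\dag$, $b_n=c_n=a_n^\dag$, so that $bb^\dag=a^\dag a$, $c^\dag c=aa^\dag$ and the eight clauses translate exactly as you describe.
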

\begin{proof} According to \cite[Proposition 6.1]{D}, given $x\in\A^\dag$, $x^\dag=x^{-(x^\dag, x^\dag)}$. To conclude the proof, apply 
Theorem \ref{thm5.4} to $a$, $b=c=a^\dag$, $a_n$ and $b_n=c_n=a_n^\dag$ ($n\in\mathbb{N}$). Note that if $x\in\A^\dag$, then $x^\dag\in A^\dag$ and 
$(x^\dag)^\dag=x$.
\end{proof}

\begin{rema}\label{rema5.8}\rm In the same conditions of Corollary  \ref{cor5.7}, note that according to \cite[Lemma 2.2]{V2},
\begin{align*}
&\hat{\delta}(\R(L_{a_n^\dag}), \R(L_{a^\dag}))=\hat{\delta}(a_n^\dag\A, a^\dag\A)=\hbox{\rm max}\{ \parallel (\uno-a^\dag a)a_n^\dag a_n\parallel,  \parallel (\uno-a_n^\dag a_n)a^\dag a\}\parallel\},&\\
&\hat{\delta}(\N(L_{a_n^\dag}), \N(L_{a^\dag}))=\hat{\delta}((a_n^\dag)^{-1}(0), (a^\dag)^{-1}(0))=\hbox{\rm max}\{ \parallel aa^\dag (\uno-a_na_n^\dag)\parallel,  \parallel a_na_n^\dag  (\uno -a a^\dag) \parallel\},&\\
&\hat{\delta}(\N(R_{a_n^\dag}), \N(R_{a^\dag}))=\hat{\delta}((a_n^\dag)_{-1}(0), (a^\dag)_{-1}(0))=\hbox{\rm max}\{ \parallel a^\dag a(\uno-a_n^\dag a_n)\parallel,  \parallel a_n^\dag a_n (\uno-a^\dag a) \parallel\},&\\
&\hat{\delta}(\R(R_{a_n^\dag}), \R(R_{a^\dag}))=\hat{\delta}(\A a_n^\dag, \A a^\dag)=\hbox{\rm max}\{\parallel  (\uno -a a^\dag )a_n a_n^\dag \parallel, \parallel  (\uno -a_na_n^\dag)a a^\dag \parallel\}.&\\
\end{align*}
Compare Corollary \ref{cor5.7} with \cite[Theorem 2.5]{V2}.
\end{rema}

\begin{cor}\label{cor5.9}Let $\X$ be a Banach space and consider a Moore-Penrose invertible operator $A\in \L (\X)$, $A\neq 0$. Suppose that there exists a sequence
of operators $(A_n)_{n\in\mathbb{N}}$ such that
for each $n\in\mathbb{N}$, $A_n$ is Moore-Penrose invertible.
If  $(A_n)_{n\in\mathbb{N}}$ converges to $A$, then the following statements are equivalent.

\begin{enumerate}[{\rm (i)}] 
\item The sequence $(A_n^\dag)_{n\in\mathbb{N}}$ converges to $A^\dag$.

\item The sequence $(A_n^\dag A_n)_{n\in\mathbb{N}}$  \rm(\it respectively $(A_nA_n^\dag)_{n\in\mathbb{N}}$\rm) \it
converges to $A^\dag A$ \rm(\it respectively  to $AA^\dag $\rm).\it 

\item The sequence $(A_n^\dag A_n)_{n\in\mathbb{N}}$ \rm(\it respectively $(\hat{\delta}(\N(A_n^\dag), \N(A^\dag)))_{n\in\mathbb{N}}$\rm) \it converges to $A^\dag A$ 
 \rm(\it respectively  to 0\rm).\it 
\item The sequence $(A_nA_n^\dag)_{n\in\mathbb{N}}$  \rm(\it respectively $(\hat{\delta}(\R(A^\dag_n), \R(A^\dag)))_{n\in\mathbb{N}}$\rm) \it
converges to $AA^\dag$  \rm(\it respectively to 0\rm).\it

\item The sequences $(\hat{\delta}(\R(A_n^\dag), \R(A^\dag)))_{n\in\mathbb{N}}$ and $(\hat{\delta}(\N(A^\dag_n), \N(A^\dag)))_{n\in\mathbb{N}}$
converge to $0$.
\end{enumerate}
\end{cor}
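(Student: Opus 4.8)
The plan is to exhibit the Moore--Penrose inverse of a Banach space operator as a particular $(B,C)$-inverse and then to read off the equivalences from Theorem \ref{thm5.2}. First I would record that if $A\in\L(\X)$ is Moore--Penrose invertible then $A^\dag$ is an outer inverse of $A$, so that, exactly as in the proof of Corollary \ref{cor5.7} and by \cite[Proposition 6.1]{D}, $A^\dag=A^{-(A^\dag,\,A^\dag)}$; likewise $A_n^\dag=A_n^{-(A_n^\dag,\,A_n^\dag)}$ for each $n$. Moreover $A^\dag$ and every $A_n^\dag$ are regular operators, and $A\neq 0$ forces $A^\dag\neq 0$ (otherwise $A=AA^\dag A=0$), so the standing hypothesis ``$A^{-(B,C)}\neq 0$'' of Theorem \ref{thm5.2} will be satisfied.

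I would then apply Theorem \ref{thm5.2} with $B=C=A^\dag$ and $B_n=C_n=A_n^\dag$ ($n\in\mathbb{N}$): the required hypotheses are precisely the facts just listed together with the assumed convergence $A_n\to A$ (note that Theorem \ref{thm5.2} does not presuppose any convergence of $(A_n^\dag)$). Under these substitutions $\R(B)=\R(A^\dag)$, $\N(C)=\N(A^\dag)$, $A^{-(B,C)}=A^\dag$ and $A_n^{-(B_n,C_n)}=A_n^\dag$, so statements (i)--(v) of Theorem \ref{thm5.2} turn verbatim into statements (i)--(v) of the present corollary. Statement (vi) of Theorem \ref{thm5.2} becomes the conjunction $\hat{\delta}(\R(A_n^\dag),\R(A^\dag))\to 0$ and $\hat{\delta}(\N(A_n^\dag),\N(A^\dag))\to 0$, which is exactly statement (v) here (since now $B=C=A^\dag$, the ranges and null spaces occurring in items (v) and (vi) of Theorem \ref{thm5.2} coincide); hence it contributes nothing new, which is why the present list has one item fewer.

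There is essentially no obstacle: the proof is a translation. The only points deserving a word of care are the identity $A^\dag=A^{-(A^\dag,A^\dag)}$ (immediate, since any outer inverse $y$ of an element is trivially its own inverse along $y$, whence $a^{-(y,y)}=y$), the regularity of $A^\dag$ and of each $A_n^\dag$ (Moore--Penrose invertibility implies regularity), and the bookkeeping showing that item (vi) of Theorem \ref{thm5.2} collapses onto item (v) when $B=C$.
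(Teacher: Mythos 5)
Your proposal is correct and follows essentially the same route as the paper: the paper's proof also identifies $A^\dag=A^{-(A^\dag,\,A^\dag)}$ and $A_n^\dag=A_n^{-(A_n^\dag,\,A_n^\dag)}$ as in Corollary \ref{cor5.7} and then applies Theorem \ref{thm5.2} with $B=C=A^\dag$, $B_n=C_n=A_n^\dag$. Your additional checks (regularity of $A^\dag$, $A^\dag\neq 0$, and the collapse of item (vi) onto item (v)) are exactly the bookkeeping the paper leaves implicit.
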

\begin{proof}  Proceed as in the proof of Corollary \ref{cor5.7} and apply Theorem \ref{thm5.2}.
\end{proof}


\section{Differentiation of the $(b, c)$-inverse}

Next follows the first characterization of this section. Observe that if $\A$ is a Banach algebra, $J\subseteq \mathbb{R}$ and there exist  functions ${\mathbf a}\colon J\to \A$ and  ${\mathbf b}$, 
${\mathbf c}\colon J\to \hat{\A}$ such that for each $t\in J$, ${\mathbf a}(t)^{-({\mathbf b(t)}, {\mathbf c(t)})}$ exists, then  ${\mathbf a}^{-({\mathbf b }, {\mathbf c})}\colon J\to \A$
is the following funciton:  ${\mathbf a}^{-({\mathbf b }, {\mathbf c})}(t)= {\mathbf a(t)}^{-({\mathbf b(t) }, {\mathbf c(t)})}$.

\begin{thm}\label{thm6.1} Let $\A$ be a Banach algebra and consider a function ${\mathbf a}\colon J\to \A$, where $J\subseteq\mathbb{R}$ is an open set.
Let ${\mathbf b}$, ${\mathbf c}\colon J\to \hat{\A}$ be two functions such that for each $t\in J$, ${\mathbf a}(t)^{-({\mathbf b(t)}, {\mathbf c(t)})}$ exists. Suppose that 
there exist functions ${\mathbf g}$, ${\mathbf h}\colon J\to \A$ such that for each $t\in J$, ${\mathbf g}(t)\in {\mathbf b}(t)\{1\}$ and  ${\mathbf h}(t)\in {\mathbf c}(t)\{1\}$.
Then, given $t_0\in J$, if the functions ${\mathbf a}$, ${\mathbf b}{\mathbf g}$, ${\mathbf h}{\mathbf c}\colon J\to \A$ are differentiable at $t_0$, the following statements are equivalent.
\begin{enumerate}[{\rm (i)}] 
\item The funciton ${\mathbf a}^{-({\mathbf b }, {\mathbf c})}\colon J\to \A$ is continuous at $t_0$.
\item  The funciton ${\mathbf a}^{-({\mathbf b}, {\mathbf c})}\colon J\to \A$ is differentiable at $t_0$.
\end{enumerate}
\noindent Moreover, in this case,
\begin{align*}
({\mathbf a}^{-({\mathbf b }, {\mathbf c})})'(t_0)=&{\mathbf a}(t_0)^{-({\mathbf b(t_0)}, {\mathbf c(t_0)})}({\mathbf h}{\mathbf c})'(t_0){\mathbf a}(t_0)^{-({\mathbf b(t_0)}, {\mathbf c(t_0)})}\\
&-(\uno-{\mathbf a}(t_0)^{-({\mathbf b(t_0)}, {\mathbf c(t_0)})}{\mathbf a}(t_0))({\mathbf b}{\mathbf g})'(t_0){\mathbf a}(t_0)^{-({\mathbf b(t_0)}, {\mathbf c(t_0)})}\\
& -{\mathbf a}(t_0)^{-({\mathbf b(t_0)}, {\mathbf c(t_0)})}{\mathbf a}'(t_0){\mathbf a}(t_0)^{-({\mathbf b(t_0)}, {\mathbf c(t_0)})}.\\
\end{align*}
\end{thm}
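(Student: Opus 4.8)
The plan is to reduce to the Bott--Duffin description of the $(b,c)$-inverse, dispose of the trivial implication, and, for the non-trivial one, treat the variation of $\mathbf a$ and the variation of the idempotents $\mathbf{bg}$, $\mathbf{hc}$ separately. Put $p(t)=\mathbf b(t)\mathbf g(t)$ and $q(t)=\mathbf h(t)\mathbf c(t)$; these are idempotents with $p(t)\A=\mathbf b(t)\A$, $\A q(t)=\A\mathbf c(t)$ and $q(t)^{-1}(0)=\mathbf c(t)^{-1}(0)$, so by Proposition \ref{pro3.10} and the definition of the Bott--Duffin inverse (Definition \ref{def15}) the element $y(t):=\mathbf a(t)^{-(\mathbf b(t),\mathbf c(t))}$ equals $\mathbf a(t)^{-(p(t),q(t))}$ and therefore satisfies
$$p(t)y(t)=y(t)=y(t)q(t),\qquad y(t)\mathbf a(t)p(t)=p(t),\qquad q(t)\mathbf a(t)y(t)=q(t),\qquad y(t)\mathbf a(t)y(t)=y(t),$$
and likewise at $t_0$; write $a_0=\mathbf a(t_0)$, $p_0=p(t_0)$, $q_0=q(t_0)$, $y_0=y(t_0)$. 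Since $\mathbf{bg}=p$ and $\mathbf{hc}=q$ are differentiable at $t_0$ they are continuous there, whence $p(t)\to p_0$ and $q(t)\to q_0$. The implication (ii)$\Rightarrow$(i) is clear. If $y_0=0$ then $p_0=y_0a_0p_0=0$ and $q_0=q_0a_0y_0=0$, and in this case continuity of $\mathbf a^{-(\mathbf b,\mathbf c)}$ at $t_0$ forces it, by the argument of Remark \ref{rema500}, to vanish on a neighbourhood of $t_0$, so the statement is trivial; hence I may assume $y_0\neq 0$, so that $p_0\neq 0\neq q_0$.

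For (i)$\Rightarrow$(ii), assume $\mathbf a^{-(\mathbf b,\mathbf c)}$ is continuous at $t_0$; then $y(t)\to y_0$. First I would dispose of the variation of $\mathbf a$. Since $\mathbf a$ is continuous at $t_0$, for $t$ close to $t_0$ one has $\|\mathbf a(t)-a_0\|<\|y_0\|^{-1}$, so Theorem \ref{thm4.1}, applied with $b=p_0$, $c=q_0$ (idempotents, hence regular, and non-zero) and $e=\mathbf a(t)-a_0$, shows that $\tilde y(t):=\mathbf a(t)^{-(p_0,q_0)}$ exists and $\tilde y(t)=\bigl(\uno+y_0(\mathbf a(t)-a_0)\bigr)^{-1}y_0$; thus $\tilde y$ is differentiable at $t_0$ with $\tilde y'(t_0)=-y_0\,\mathbf a'(t_0)\,y_0$. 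It then remains to compare $y(t)$ with $\tilde y(t)$, which are the outer inverses of the \emph{same} element $\mathbf a(t)$ whose ranges are $p(t)\A$ and $p_0\A$ and whose null spaces are $q(t)^{-1}(0)$ and $q_0^{-1}(0)$.

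The core of the argument is a finite-difference identity: using the Bott--Duffin relations at $t$ and at $t_0$ I would establish that, for $t$ near $t_0$,
$$y(t)-\tilde y(t)=y_0\bigl(q(t)-q_0\bigr)y_0-\bigl(\uno-y_0a_0\bigr)\bigl(p(t)-p_0\bigr)y_0+E(t),$$
where $E(t)$ is a finite sum of products each of which contains at least two of the increments $\mathbf a(t)-a_0$, $p(t)-p_0$, $q(t)-q_0$, $y(t)-y_0$. As $\mathbf a$, $\mathbf{bg}$ and $\mathbf{hc}$ are differentiable at $t_0$, the first three increments are $O(|t-t_0|)$, while $y(t)-y_0=o(1)$ by continuity, so $E(t)=o(|t-t_0|)$. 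Dividing by $t-t_0$, letting $t\to t_0$ and using $y(t)\to y_0$, $\tilde y(t)\to y_0$ together with the differentiability of $\tilde y$, $\mathbf a$, $p$, $q$, one gets
$$\lim_{t\to t_0}\frac{y(t)-y_0}{t-t_0}=\tilde y'(t_0)+y_0(\mathbf{hc})'(t_0)y_0-\bigl(\uno-y_0a_0\bigr)(\mathbf{bg})'(t_0)y_0,$$
which, after inserting $\tilde y'(t_0)=-y_0\mathbf a'(t_0)y_0$, is at once the differentiability of $\mathbf a^{-(\mathbf b,\mathbf c)}$ at $t_0$ and the asserted formula.

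The step I expect to be the main obstacle is precisely the finite-difference identity: the Bott--Duffin relations, manipulated among themselves, only yield tautologies, so one must exploit that $y(t)$ is the \emph{unique} $(p(t),q(t))$-inverse. This can be done through the perturbation analysis of \cite[Theorem 3.5]{DX}, the same tool used in the proof of Theorem \ref{thm5.2}: after passing to $L_{\mathbf a(t)}$ one writes $y(t)-\tilde y(t)$ as a convergent series in the increments, its norm being controlled by $\hat{\delta}(p(t)\A,p_0\A)$, $\hat{\delta}(q(t)^{-1}(0),q_0^{-1}(0))$ and $\|\mathbf a(t)-a_0\|$; identifying the first-order part of that series with $y_0(q(t)-q_0)y_0-(\uno-y_0a_0)(p(t)-p_0)y_0$ and collecting the remaining terms into $E(t)$ is then a direct, if lengthy, computation with the defining relations.
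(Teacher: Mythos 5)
Your skeleton is right — expand the difference $y(t)-y(t_0)$ to first order in the increments of ${\mathbf a}$, ${\mathbf b}{\mathbf g}$, ${\mathbf h}{\mathbf c}$, divide by $t-t_0$, and use (i) to pass to the limit — and your target formula is the correct one. But the single step that carries the whole proof, your finite-difference identity, is exactly the step you leave unproven, and the route you propose for it does not work as described. \cite[Theorem 3.5]{DX} (the tool used in Theorem \ref{thm5.2}) furnishes a norm \emph{estimate} for the perturbed outer inverse, not a series expansion with an identifiable linear part; a bound of that kind controls $\|y(t)-\tilde y(t)\|$ but cannot, by itself, produce the first-order term $y_0(q(t)-q_0)y_0-(\uno-y_0a_0)(p(t)-p_0)y_0$, so "identifying the first-order part of that series" is not a direct computation — it is the entire content of the theorem, and it is missing.

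Moreover, your diagnosis that "the Bott--Duffin relations, manipulated among themselves, only yield tautologies" is what sends you down this road, and it is false. The paper's proof is a two-line affair precisely because an \emph{exact} identity (no error term $E(t)$, no auxiliary $\tilde y$) follows from the defining relations alone: by \cite[Corollary 7.12]{b2},
\begin{align*}
y(t)-y(t_0)=&\;y(t)\bigl({\mathbf h}{\mathbf c}(t)-{\mathbf h}{\mathbf c}(t_0)\bigr)y(t_0)
-\bigl(\uno-y(t){\mathbf a}(t)\bigr)\bigl({\mathbf b}{\mathbf g}(t)-{\mathbf b}{\mathbf g}(t_0)\bigr)y(t_0)\\
&+y(t)\bigl({\mathbf a}(t_0)-{\mathbf a}(t)\bigr)y(t_0),
\end{align*}
after which one divides by $t-t_0$ and uses (i) (which gives $y(t)\to y(t_0)$) together with the differentiability of ${\mathbf a}$, ${\mathbf b}{\mathbf g}$, ${\mathbf h}{\mathbf c}$. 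The mechanism for producing such identities from the relations $p\,y=y=y\,q$, $y a p=p$, $q a y=q$ is telescoping plus the two annihilation facts $(\uno-y(t){\mathbf a}(t))p(t)=0$ and $p_0y_0=y_0$ (and their duals for $q$); it is carried out explicitly, in the operator setting, in Lemma \ref{lem7.4} of this paper. So the gap is concrete: you need to either prove your identity by this algebraic telescoping (which also removes the need for the detour through $\tilde y$ and Theorem \ref{thm4.1}) or cite the exact identity from \cite{b2}; the perturbation-theoretic route you sketch does not close it.
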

\begin{proof} It is enough to prove that statement (i) implies statement (ii).  This proof can be deduced from \cite[Corollary 7.12]{b2}. In fact, according to this result,
\begin{align*}
{\mathbf a}(t)^{-({\mathbf b(t)}, {\mathbf c(t)})}-{\mathbf a}(t_0)^{-({\mathbf b(t_0)}, {\mathbf c(t_0)})}&={\mathbf a}(t)^{-({\mathbf b(t)}, {\mathbf c(t)})}({\mathbf h}{\mathbf c}(t)-{\mathbf h}{\mathbf c}(t_0))
{\mathbf a}(t_0)^{-({\mathbf b(t_0)}, {\mathbf c(t_0)})}\\
&-(\uno-{\mathbf a}(t)^{-({\mathbf b(t)}, {\mathbf c(t)})}{\mathbf a}(t))({\mathbf b}{\mathbf g}(t)-{\mathbf b}{\mathbf g}(t_0)){\mathbf a}(t_0)^{-({\mathbf b(t_0)}, {\mathbf c(t_0)})}\\
&+ {\mathbf a}(t)^{-({\mathbf b(t)}, {\mathbf c(t)})}({\mathbf a}(t_0)-{\mathbf a(t)}){\mathbf a}(t_0)^{-({\mathbf b(t_0)}, {\mathbf c(t_0)})}.\\
\end{align*}
\noindent Now divide each term by $t-t_0$ and note that the limt leads the formula of $({\mathbf a}^{-({\mathbf b }, {\mathbf c})})'(t_0)$.
\end{proof}

When the function ${\mathbf b}$ and ${\mathbf c}$ in Theorem \ref{thm6.1} are such that ${\mathbf b}(J)\subseteq \A^\dag$ and ${\mathbf c}(J)\subseteq \A^\dag$,
the aforementioned result can be reformulated as follows. Note first that if $\A$ is a Banach algebra and ${\mathbf x}\colon J\to \A^\dag$ is a function ($J\subseteq\mathbb{R}$), then ${\mathbf x}^\dag\colon J\to \A$
denotes the following function:  ${\mathbf x}^\dag(t)=  ({\mathbf x}(t))^\dag$.

\begin{cor}\label{cor6.3} Let $\A$ be a Banach algebra and consider a function ${\mathbf a}\colon J\to \A$, where $J\subseteq\mathbb{R}$ is an open set.
Let ${\mathbf b}$, ${\mathbf c}\colon J\to \A^\dag$ be two functions such that for each $t\in J$, ${\mathbf a}(t)^{-({\mathbf b(t)}, {\mathbf c(t)})}$ exists. 
Then, given $t_0\in J$, if the functions ${\mathbf a}$, ${\mathbf b}{\mathbf b}^\dag$, ${\mathbf c}^\dag{\mathbf c}\colon J\to \A$ are differentiable at $t_0$, the following statements are equivalent.
\begin{enumerate}[{\rm (i)}] 
\item The funciton ${\mathbf a}^{-({\mathbf b }, {\mathbf c})}\colon J\to \A$ is continuous at $t_0$.
\item  The funciton ${\mathbf a}^{-({\mathbf b}, {\mathbf c})}\colon J\to \A$ is differentiable at $t_0$.
\end{enumerate}
\noindent Moreover, in this case,
\begin{align*}
({\mathbf a}^{-({\mathbf b }, {\mathbf c})})'(t_0)=&{\mathbf a}(t_0)^{-({\mathbf b(t_0)}, {\mathbf c(t_0)})}({\mathbf c}^\dag{\mathbf c})'(t_0){\mathbf a}(t_0)^{-({\mathbf b(t_0)}, {\mathbf c(t_0)})}\\
&-(\uno-{\mathbf a}(t_0)^{-({\mathbf b(t_0)}, {\mathbf c(t_0)})}{\mathbf a}(t_0))({\mathbf b}{\mathbf b}^\dag)'(t_0){\mathbf a}(t_0)^{-({\mathbf b(t_0)}, {\mathbf c(t_0)})}\\
& -{\mathbf a}(t_0)^{-({\mathbf b(t_0)}, {\mathbf c(t_0)})}{\mathbf a}'(t_0){\mathbf a}(t_0)^{-({\mathbf b(t_0)}, {\mathbf c(t_0)})}.\\
\end{align*}
\end{cor}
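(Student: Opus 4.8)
The plan is to derive Corollary \ref{cor6.3} as a direct specialization of Theorem \ref{thm6.1}. The key observation is that when $\mathbf{b}(t), \mathbf{c}(t) \in \A^\dag$ for every $t \in J$, the Moore-Penrose inverses $\mathbf{b}^\dag(t) = (\mathbf{b}(t))^\dag$ and $\mathbf{c}^\dag(t) = (\mathbf{c}(t))^\dag$ furnish canonical choices of inner inverses. Indeed, since $\mathbf{b}(t)^\dag$ is in particular a generalized (inner) inverse of $\mathbf{b}(t)$, we may take $\mathbf{g}(t) = \mathbf{b}(t)^\dag \in \mathbf{b}(t)\{1\}$; similarly $\mathbf{h}(t) = \mathbf{c}(t)^\dag \in \mathbf{c}(t)\{1\}$. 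With these choices, $\mathbf{b}\mathbf{g} = \mathbf{b}\mathbf{b}^\dag$ and $\mathbf{h}\mathbf{c} = \mathbf{c}^\dag\mathbf{c}$, so the differentiability hypotheses of Theorem \ref{thm6.1} on $\mathbf{b}\mathbf{g}$ and $\mathbf{h}\mathbf{c}$ at $t_0$ become precisely the hypotheses stated here on $\mathbf{b}\mathbf{b}^\dag$ and $\mathbf{c}^\dag\mathbf{c}$.

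First I would note explicitly that for $x \in \A^\dag$ one has $x = x x^\dag x$, hence $x^\dag \in x\{1\}$, which legitimizes the choices of $\mathbf{g}$ and $\mathbf{h}$ just described. Then I would invoke Theorem \ref{thm6.1} verbatim with $\mathbf{g} = \mathbf{b}^\dag$ and $\mathbf{h} = \mathbf{c}^\dag$: the equivalence of continuity at $t_0$ and differentiability at $t_0$ of $\mathbf{a}^{-(\mathbf{b}, \mathbf{c})}$ follows immediately, and the derivative formula of Theorem \ref{thm6.1} becomes, after substituting $\mathbf{h}\mathbf{c} = \mathbf{c}^\dag\mathbf{c}$ and $\mathbf{b}\mathbf{g} = \mathbf{b}\mathbf{b}^\dag$, exactly the asserted expression for $(\mathbf{a}^{-(\mathbf{b}, \mathbf{c})})'(t_0)$.

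I do not anticipate a serious obstacle here; the statement is essentially a notational reformulation. The only point requiring a word of care is that in Theorem \ref{thm6.1} the inner-inverse-valued functions $\mathbf{g}, \mathbf{h}$ are assumed to exist but are otherwise arbitrary, whereas here we are forced to use the specific selections $\mathbf{b}^\dag, \mathbf{c}^\dag$; one should confirm that the conclusion of Theorem \ref{thm6.1} is genuinely independent of which inner inverses are chosen (it is, since both the equivalence and the derivative formula are governed by the products $\mathbf{b}\mathbf{g}$ and $\mathbf{h}\mathbf{c}$, and any two such products differ in a way controlled by the outer inverse, which does not affect differentiability at $t_0$ once continuity is assumed). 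The proof is therefore one line: apply Theorem \ref{thm6.1} with $\mathbf{g} = \mathbf{b}^\dag$ and $\mathbf{h} = \mathbf{c}^\dag$.
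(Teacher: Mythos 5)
Your proposal is correct and is exactly the paper's argument: the paper proves Corollary \ref{cor6.3} by applying Theorem \ref{thm6.1} with the choices ${\mathbf g}={\mathbf b}^\dag$ and ${\mathbf h}={\mathbf c}^\dag$, which are legitimate since the Moore--Penrose inverse is in particular an inner inverse, so that ${\mathbf b}{\mathbf g}={\mathbf b}{\mathbf b}^\dag$ and ${\mathbf h}{\mathbf c}={\mathbf c}^\dag{\mathbf c}$ and the formula follows by direct substitution. Your closing worry about independence of the choice of inner inverse is unnecessary: the theorem only requires some admissible selection, and the stated hypotheses and formula are already phrased in terms of the specific products ${\mathbf b}{\mathbf b}^\dag$ and ${\mathbf c}^\dag{\mathbf c}$.
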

\begin{proof} Apply Theorem \ref{thm6.1} to the case under consideration.
\end{proof}
As an application of Theorem \ref{thm6.1}, a characterization of the differentiability of the Moore-Penrose inverse in the Banach context will be given.

\begin{cor}\label{cor6.2} Let $\A$ be a Banach algebra and consider a function ${\mathbf a}\colon J\to \A^\dag$, where $J\subseteq\mathbb{R}$ is an open set. 
Suppose that there exists $t_0\in J$ such that  the function ${\mathbf a}\colon J\to \A$ is  differentiable at $t_0$. Then, the following statements are equivalent.
\begin{enumerate}[{\rm (i)}] 
\item The function ${\mathbf a}^\dag\colon J\to \A$ is differentiable at $t_0$.
\item The function ${\mathbf a}^\dag\colon J\to \A$ is continuous at  $t_0$ and the functions ${\mathbf a}{\mathbf a}^\dag$, ${\mathbf a}^\dag {\mathbf a}\colon J\to \A$
are differentiable at $t_0$.
\end{enumerate}
Furthermore, in this case, 
\begin{align*}
({\mathbf a}^\dag)'(t_0)=&{\mathbf a}^\dag(t_0)({\mathbf a}{\mathbf a}^\dag)'(t_0){\mathbf a}^\dag(t_0) -(\uno-{\mathbf a}^\dag {\mathbf a}(t_0))({\mathbf a}^\dag {\mathbf a})'(t_0){\mathbf a}^\dag(t_0)
-{\mathbf a}^\dag(t_0){\mathbf a}'(t_0){\mathbf a}^\dag(t_0).\\
\end{align*}
\end{cor}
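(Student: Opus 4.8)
The plan is to deduce Corollary~\ref{cor6.2} from Theorem~\ref{thm6.1} by recognizing the Moore-Penrose inverse as a special case of the $(b,c)$-inverse. Recall from \cite[Proposition 6.1]{D} that for $x\in\A^\dag$ one has $x^\dag=x^{-(x^\dag,\,x^\dag)}$; equivalently, $x$ is $(x^\dag,x^\dag)$-invertible and its $(x^\dag,x^\dag)$-inverse is precisely $x^\dag$. So the idea is to set ${\mathbf b}={\mathbf c}={\mathbf a}^\dag\colon J\to\A^\dag\subseteq\hat\A$, which gives ${\mathbf a}(t)^{-({\mathbf b}(t),{\mathbf c}(t))}={\mathbf a}(t)^\dag$ for all $t\in J$, so that ${\mathbf a}^{-({\mathbf b},{\mathbf c})}={\mathbf a}^\dag$ as functions on $J$. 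Since ${\mathbf b}(J)\subseteq\A^\dag$ and ${\mathbf c}(J)\subseteq\A^\dag$, one is in the situation of Corollary~\ref{cor6.3}, with ${\mathbf b}^\dag=({\mathbf a}^\dag)^\dag={\mathbf a}$ (using that $(x^\dag)^\dag=x$ for $x\in\A^\dag$) and likewise ${\mathbf c}^\dag={\mathbf a}$.

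\textbf{Key steps.} First I would verify the identifications: ${\mathbf b}={\mathbf c}={\mathbf a}^\dag$, and ${\mathbf b}{\mathbf b}^\dag={\mathbf a}^\dag{\mathbf a}$, while ${\mathbf c}^\dag{\mathbf c}={\mathbf a}\,{\mathbf a}^\dag$ — note the order: $({\mathbf a}^\dag)^\dag{\mathbf a}^\dag={\mathbf a}\,{\mathbf a}^\dag$. Next, translate the differentiability hypotheses of Corollary~\ref{cor6.3} at $t_0$: the function ${\mathbf a}$ is differentiable at $t_0$ by assumption; the functions ${\mathbf b}{\mathbf b}^\dag={\mathbf a}^\dag{\mathbf a}$ and ${\mathbf c}^\dag{\mathbf c}={\mathbf a}{\mathbf a}^\dag$ are differentiable at $t_0$ precisely under hypothesis (ii), and are automatically differentiable (being constant equal to the corresponding idempotents? — no, they need not be constant) under (i) combined with the formula, so this is where the equivalence structure is inherited. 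Concretely, (i)$\Rightarrow$(ii): if ${\mathbf a}^\dag$ is differentiable at $t_0$, then it is continuous at $t_0$, and ${\mathbf a}^\dag{\mathbf a}$, ${\mathbf a}{\mathbf a}^\dag$ are products of functions differentiable at $t_0$, hence differentiable at $t_0$. (ii)$\Rightarrow$(i): continuity of ${\mathbf a}^\dag$ at $t_0$ together with differentiability of ${\mathbf a}$, ${\mathbf a}{\mathbf a}^\dag={\mathbf c}^\dag{\mathbf c}$, ${\mathbf a}^\dag{\mathbf a}={\mathbf b}{\mathbf b}^\dag$ at $t_0$ is exactly hypothesis (i) of Corollary~\ref{cor6.3}, whose conclusion gives differentiability of ${\mathbf a}^{-({\mathbf b},{\mathbf c})}={\mathbf a}^\dag$ at $t_0$.

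\textbf{The derivative formula.} Finally, substitute into the formula of Corollary~\ref{cor6.3}: with ${\mathbf a}(t_0)^{-({\mathbf b}(t_0),{\mathbf c}(t_0))}={\mathbf a}^\dag(t_0)$, $({\mathbf c}^\dag{\mathbf c})'(t_0)=({\mathbf a}{\mathbf a}^\dag)'(t_0)$, $({\mathbf b}{\mathbf b}^\dag)'(t_0)=({\mathbf a}^\dag{\mathbf a})'(t_0)$, one obtains exactly
\begin{align*}
({\mathbf a}^\dag)'(t_0)=&{\mathbf a}^\dag(t_0)({\mathbf a}{\mathbf a}^\dag)'(t_0){\mathbf a}^\dag(t_0)-(\uno-{\mathbf a}^\dag{\mathbf a}(t_0))({\mathbf a}^\dag{\mathbf a})'(t_0){\mathbf a}^\dag(t_0)\\
&-{\mathbf a}^\dag(t_0){\mathbf a}'(t_0){\mathbf a}^\dag(t_0),
\end{align*}
which is the claimed expression.

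\textbf{Main obstacle.} The only delicate point is bookkeeping of the order of factors and of hypotheses: one must be careful that ${\mathbf c}^\dag{\mathbf c}$ corresponds to ${\mathbf a}{\mathbf a}^\dag$ (not ${\mathbf a}^\dag{\mathbf a}$) and that ${\mathbf b}{\mathbf b}^\dag$ corresponds to ${\mathbf a}^\dag{\mathbf a}$, so that the resulting formula matches; and one must check that differentiability of ${\mathbf a}$ at $t_0$ alone (without smoothness of $t\mapsto{\mathbf a}^\dag(t)$ as a hypothesis on ${\mathbf b}$, ${\mathbf c}$) is enough to invoke Corollary~\ref{cor6.3}, which it is since there the only standing requirement on ${\mathbf b}$, ${\mathbf c}$ is that ${\mathbf a}(t)^{-({\mathbf b}(t),{\mathbf c}(t))}$ exist for each $t$ — guaranteed here because ${\mathbf a}(t)\in\A^\dag$ — plus differentiability of ${\mathbf b}{\mathbf b}^\dag$ and ${\mathbf c}^\dag{\mathbf c}$ at $t_0$, which is hypothesis (ii). No genuinely hard estimate is involved; everything reduces to a substitution once the correspondence is set up correctly.
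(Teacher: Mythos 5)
Your proposal is correct and follows essentially the same route as the paper: the paper applies Theorem \ref{thm6.1} with ${\mathbf b}={\mathbf c}={\mathbf a}^\dag$ (implicitly taking ${\mathbf g}={\mathbf h}={\mathbf a}$, so ${\mathbf b}{\mathbf g}={\mathbf a}^\dag{\mathbf a}$ and ${\mathbf h}{\mathbf c}={\mathbf a}{\mathbf a}^\dag$), which is exactly what your invocation of Corollary \ref{cor6.3} amounts to since there ${\mathbf g}={\mathbf b}^\dag={\mathbf a}$ and ${\mathbf h}={\mathbf c}^\dag={\mathbf a}$. Your bookkeeping of ${\mathbf b}{\mathbf b}^\dag={\mathbf a}^\dag{\mathbf a}$ versus ${\mathbf c}^\dag{\mathbf c}={\mathbf a}{\mathbf a}^\dag$ and the resulting derivative formula are both right.
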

\begin{proof} Recall that given $x\in\A^\dag$, $x^\dag= x^{-(x^\dag, x^\dag)}$ ({\hspace{-1pt}}\cite[Propostion 6.1]{D})). To conclude the proof apply Theorem \ref{thm6.1} to the function
${\mathbf a}$, ${\mathbf b}$, ${\mathbf c}\colon J\to\A$, where ${\mathbf b}={\mathbf c}={\mathbf a}^\dag$.
\end{proof}

In the frame of $C^*$-algebras, the hypotheses of Corollary \ref{cor6.3} can be lightened.

\begin{cor}\label{cor6.4} Let $\A$ be a $C^*$-algebra and consider a function ${\mathbf a}\colon J\to \A$, where $J\subseteq\mathbb{R}$ is an open set.
Let ${\mathbf b}$, ${\mathbf c}\colon J\to \hat{\A}$ be two functions such that for each $t\in J$, ${\mathbf a}(t)^{-({\mathbf b(t)}, {\mathbf c(t)})}$ exists.
Then, given $t_0\in J$, if the functions ${\mathbf a}$, ${\mathbf b}$, ${\mathbf c}\colon J\to \A$ are differentiable at $t_0$, the  functions
${\mathbf b}^\dag$, ${\mathbf c}^\dag\colon J\to \A$ are continuous at $t_0$, and ${\mathbf a}(t_0)^{-({\mathbf b(t_0)}, {\mathbf c(t_0)})}\neq 0$, the following statements are equivalent.
\begin{enumerate}[{\rm (i)}] 
\item The funciton ${\mathbf a}^{-({\mathbf b }, {\mathbf c})}\colon J\to \A$ is continuous at $t_0$.
\item  The funciton ${\mathbf a}^{-({\mathbf b}, {\mathbf c})}\colon J\to \A$ is differentiable at $t_0$.
\end{enumerate}
\noindent Moreover, in this case,
\begin{align*}
({\mathbf a}^{-({\mathbf b }, {\mathbf c})})'(t_0)=&{\mathbf a}(t_0)^{-({\mathbf b(t_0)}, {\mathbf c(t_0)})}(({\mathbf c}^\dag)'(t_0){\mathbf c}(t_0)+{\mathbf c}^\dag(t_0){\mathbf c}'(t_0)){\mathbf a}(t_0)^{-({\mathbf b(t_0)}, {\mathbf c(t_0)})}\\
&-(\uno-{\mathbf a}(t_0)^{-({\mathbf b(t_0)}, {\mathbf c(t_0)})}{\mathbf a}(t_0))({\mathbf b}'(t_0){\mathbf b}^\dag(t_0)+{\mathbf b}(t_0)({\mathbf b}^\dag)'(t_0)){\mathbf a}(t_0)^{-({\mathbf b(t_0)}, {\mathbf c(t_0)})}\\
& -{\mathbf a}(t_0)^{-({\mathbf b(t_0)}, {\mathbf c(t_0)})}{\mathbf a}'(t_0){\mathbf a}(t_0)^{-({\mathbf b(t_0)}, {\mathbf c(t_0)})}.\\
\end{align*}
\end{cor}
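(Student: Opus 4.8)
The plan is to deduce Corollary~\ref{cor6.4} from Corollary~\ref{cor6.3}. Relative to the hypotheses of the latter, the only thing missing is that the projection-valued functions $\mathbf{b}\mathbf{b}^\dag$ and $\mathbf{c}^\dag\mathbf{c}$ be differentiable at $t_0$; granting this, Corollary~\ref{cor6.3} gives at once the equivalence of (i) and (ii), and the displayed formula is obtained from the one in Corollary~\ref{cor6.3} by inserting $(\mathbf{b}\mathbf{b}^\dag)'(t_0)=\mathbf{b}'(t_0)\mathbf{b}^\dag(t_0)+\mathbf{b}(t_0)(\mathbf{b}^\dag)'(t_0)$ and $(\mathbf{c}^\dag\mathbf{c})'(t_0)=(\mathbf{c}^\dag)'(t_0)\mathbf{c}(t_0)+\mathbf{c}^\dag(t_0)\mathbf{c}'(t_0)$. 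So the whole corollary reduces to the following assertion: in a $C^*$-algebra, if $\mathbf{x}\colon J\to\hat{\A}$ is differentiable at $t_0$ and $\mathbf{x}^\dag$ is continuous at $t_0$, then $\mathbf{x}^\dag$ is differentiable at $t_0$ (apply it to $\mathbf{x}=\mathbf{b}$ and to $\mathbf{x}=\mathbf{c}$, and then use the product rule).

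To prove that assertion I would start from the identity $\mathbf{x}(t)^\dag=(\mathbf{x}(t)^*\mathbf{x}(t))^\dag\,\mathbf{x}(t)^*$, valid for every $t$, and observe that $t\mapsto\mathbf{x}(t)^*\mathbf{x}(t)$ is differentiable at $t_0$ with positive values. Continuity of $\mathbf{x}^\dag$ at $t_0$ gives a neighbourhood $J_0$ of $t_0$ with $\parallel\mathbf{x}(t)^\dag\parallel\le\rho$ on $J_0$; by the $C^*$-identity $\parallel(\mathbf{x}(t)^*\mathbf{x}(t))^\dag\parallel=\parallel\mathbf{x}(t)^\dag(\mathbf{x}(t)^\dag)^*\parallel=\parallel\mathbf{x}(t)^\dag\parallel^2\le\rho^2$, so the non-zero part of $\sigma(\mathbf{x}(t)^*\mathbf{x}(t))$ lies in $[\rho^{-2},\infty)$; shrinking $J_0$ also bounds $\parallel\mathbf{x}(t)^*\mathbf{x}(t)\parallel$ by some $M$, so $\sigma(\mathbf{x}(t)^*\mathbf{x}(t))\subseteq\{0\}\cup[\rho^{-2},M]$ for all $t\in J_0$, a set independent of $t$. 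Since $\mathbf{x}(t)^*\mathbf{x}(t)$ is positive and regular, $0$ is isolated in (or absent from) its spectrum, whence $(\mathbf{x}(t)^*\mathbf{x}(t))^\dag=h(\mathbf{x}(t)^*\mathbf{x}(t))$ for any fixed function $h$ that is holomorphic on a fixed neighbourhood of $\{0\}\cup[\rho^{-2},M]$, vanishes near $0$, and equals $\lambda\mapsto\lambda^{-1}$ near $[\rho^{-2},M]$. Representing $h(\cdot)$ by the holomorphic functional calculus along a fixed contour and using that inversion is smooth on $\A^{-1}$, the map $t\mapsto(\mathbf{x}(t)^*\mathbf{x}(t))^\dag$ is differentiable at $t_0$; by the product rule so is $\mathbf{x}^\dag=(\mathbf{x}^*\mathbf{x})^\dag\mathbf{x}^*$. (Alternatively, one may invoke a $C^*$-algebra perturbation identity expressing $\mathbf{x}(t)^\dag-\mathbf{x}(t_0)^\dag$ linearly in $\mathbf{x}(t)-\mathbf{x}(t_0)$ with coefficients built from $\mathbf{x}(t)^\dag,\mathbf{x}(t_0)^\dag$ and their adjoints, divide by $t-t_0$, and let $t\to t_0$.)

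With the assertion available, $\mathbf{b}^\dag$ and $\mathbf{c}^\dag$ are differentiable at $t_0$, hence so are $\mathbf{b}\mathbf{b}^\dag$ and $\mathbf{c}^\dag\mathbf{c}$, Corollary~\ref{cor6.3} applies, and substituting the two product-rule expressions into the formula there produces the formula in the statement. The hypothesis $\mathbf{a}(t_0)^{-(\mathbf{b}(t_0),\mathbf{c}(t_0))}\neq 0$ serves only to place oneself in the non-degenerate situation (in particular $\mathbf{b}(t_0),\mathbf{c}(t_0)\neq 0$), consistently with the results of Sections 5 and 6.

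The main obstacle is exactly that italicised assertion: upgrading \emph{continuity} of the Moore--Penrose inverse to \emph{differentiability}. It has no analogue in an arbitrary Banach algebra — which is precisely why Corollary~\ref{cor6.3} must assume $\mathbf{b}\mathbf{b}^\dag$ and $\mathbf{c}^\dag\mathbf{c}$ differentiable by hand — and it relies essentially on the $C^*$-identity, to secure a spectral gap uniform near $t_0$, together with functional calculus. One must be careful to choose the neighbourhood $J_0$ and the bounds $\rho,M$ in one stroke, and to check that the functional-calculus representation of $(\mathbf{x}(t)^*\mathbf{x}(t))^\dag$ is also correct when $0\notin\sigma(\mathbf{x}(t)^*\mathbf{x}(t))$, where it reduces to ordinary inversion.
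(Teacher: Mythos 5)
Your proposal is correct and follows the same overall route as the paper: reduce to Corollary \ref{cor6.3} by showing that ${\mathbf b}^\dag$ and ${\mathbf c}^\dag$ are differentiable at $t_0$, then insert the product-rule expressions for $({\mathbf b}{\mathbf b}^\dag)'(t_0)$ and $({\mathbf c}^\dag{\mathbf c})'(t_0)$ to get the displayed formula. The only divergence is in how the key ingredient is obtained: the paper's proof is essentially two lines --- it notes that ${\mathbf a}(t_0)^{-({\mathbf b}(t_0),{\mathbf c}(t_0))}\neq 0$ forces ${\mathbf b}(t_0)\neq 0$ and ${\mathbf c}(t_0)\neq 0$ and then simply cites Koliha's result (\cite[Theorem 2.1]{kol}) that, in a $C^*$-algebra, differentiability of the element together with continuity of its Moore--Penrose inverse at $t_0$ yields differentiability of the Moore--Penrose inverse at $t_0$ --- whereas you reprove exactly that assertion from scratch. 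Your argument via $x^\dag=(x^*x)^\dag x^*$, the bound $\parallel (x^*x)^\dag\parallel=\parallel x^\dag\parallel^2\le\rho^2$ giving a spectral gap $\sigma(x^*x)\subseteq\{0\}\cup[\rho^{-2},M]$ uniform on a neighbourhood of $t_0$, and a fixed-contour holomorphic functional calculus (where uniform resolvent bounds are free because $x^*x$ is self-adjoint, so $\parallel(\lambda-x^*x)^{-1}\parallel=1/\mathrm{dist}(\lambda,\sigma(x^*x))$) is sound, and the degenerate case $0\notin\sigma(x^*x)$ is handled as you say. What the paper's citation buys is brevity; what your version buys is self-containedness, and it makes explicit where the $C^*$-structure enters (the $C^*$-identity producing the uniform spectral gap), which is precisely why no such upgrade is available in Corollary \ref{cor6.3} for general Banach algebras, where differentiability of ${\mathbf b}{\mathbf b}^\dag$ and ${\mathbf c}^\dag{\mathbf c}$ must be assumed. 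Your reading of the role of the hypothesis ${\mathbf a}(t_0)^{-({\mathbf b}(t_0),{\mathbf c}(t_0))}\neq 0$ also matches the paper's.
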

\begin{proof} Note that the condition ${\mathbf a}(t_0)^{-({\mathbf b(t_0)}, {\mathbf c(t_0)})}\neq 0$ implies that ${\mathbf b}(t_0)\neq 0$ and ${\mathbf c}(t_0)\neq 0$ (Definition \ref{def1}). 
Thus, according to \cite[Theorem 2.1]{kol}, the functions ${\mathbf b}^\dag$, ${\mathbf c}^\dag\colon J\to \A$ are differentiable at $t_0$. To conclude the proof,  apply Corollary \ref{cor6.3}.
\end{proof}

As in section 5, the results concerning the differentiability of the inverse along an element can be deduced from Theorem \ref{thm6.1}, Corollary \ref{cor6.2} and Corollary \ref{cor6.4}.
The details are left to the interested reader. 


\section{The outer inverse \OIP}

Although similar arguments to the ones used in sections 5 and 6 will be applied to study the continuity and the differentiability of the outer inverse \OIP, the results of this section can not be derived from the corresponding ones concerning the $(b, c)$-inverse. In fact, in what follows operators between two different Banach spaces will be considered. First the gap between subspaces
will be used to characterize the continity of the \OIP.

\begin{thm}\label{thm7.1} Let $\X$ and $\Y$ be Banach spaces and consider $A\in\L(\X, \Y)$ and two subspaces $\T\subseteq \X$
and $\S\subseteq \Y$ such that $A^{(2)}_{\T, \S}$ exists. Let $(A_n)_{n\in\mathbb{N}}\subset\L(\X, \Y)$ and consider $(\T_n)_{n\in\mathbb{N}}$ and   $(\S_n)_{n\in\mathbb{N}}$
two sequences of subspaces of $\X$ and $\Y$, respectively, such that $(A_n)^{(2)}_{\T_n, \S_n}$ exists, for each $n\in\mathbb{N}$. Suppose that $(A_n)_{n\in\mathbb{N}}$ converges to $A$.
The following statements are equivalent.
\begin{enumerate}[{\rm (i)}] 
\item The sequence $((A_n)^{(2)}_{\T_n, \S_n})_{n\in\mathbb{N}}$ converges to $A^{(2)}_{\T, \S}$.
\item The sequences $((A_n)^{(2)}_{\T_n, \S_n}A_n)_{n\in\mathbb{N}}$ and $(A_n(A_n)^{(2)}_{\T_n, \S_n})_{n\in\mathbb{N}}$ converge to $A^{(2)}_{\T, \S}A$ and $AA^{(2)}_{\T, \S}$, respectively.
\item The sequence $((A_n)^{(2)}_{\T_n, \S_n}A_n)_{n\in\mathbb{N}}$ \rm(\it respectively $(\hat{\delta}(\S_n, \S))_{n\in\mathbb{N}}$\rm) \it    converges to $A^{(2)}_{\T, \S}A$ \rm(\it respectively to 0\rm).\it
\item The sequence $(A_n(A_n)^{(2)}_{\T_n, \S_n})_{n\in\mathbb{N}}$ \rm (\it respectively  $(\hat{\delta}(\T_n, \T))_{n\in\mathbb{N}}$\rm) \it converges to $AA^{(2)}_{\T, \S}$  \rm(\it respectively to 0\rm).\it
\item The sequences $(\hat{\delta}(\T_n, \T))_{n\in\mathbb{N}}$ and $(\hat{\delta}(\S_n, \S))_{n\in\mathbb{N}}$
converge to $0$.\par
\end{enumerate}
\end{thm}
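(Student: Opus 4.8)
The plan is to reduce everything to the known perturbation estimate for the outer inverse with prescribed range and null space (\cite[Lemma 3.4]{DX} and \cite[Theorem 3.5]{DX}), exactly as in the proof of Theorem \ref{thm5.2}, but now carried out in $\L(\X,\Y)$ rather than in $\L(\X)$; the only difference is that the underlying spaces are distinct, so one cannot simply invoke Theorem \ref{thm5.2}. First I would record the description of $A^{(2)}_{\T,\S}$ supplied after Definition \ref{def30}: since $A^{(2)}_{\T,\S}$ exists, $\T$ and $\S$ are complemented, $A\mid_\T^{A(\T)}$ is invertible and $A(\T)\oplus\S=\Y$; moreover
\begin{align*}
&\R(A^{(2)}_{\T,\S}A)=\R(A^{(2)}_{\T,\S})=\T,& &\N(A_nA^{(2)}_n)=\N(A^{(2)}_n)=\S_n,&
\end{align*}
and similarly $\R((A_n)^{(2)}_{\T_n,\S_n}A_n)=\T_n$, $\N(A_n(A_n)^{(2)}_{\T_n,\S_n})=\S_n$, using that $(A_n)^{(2)}_{\T_n,\S_n}$ is an outer inverse of $A_n$. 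These identities are the analogue of the ones displayed in the proof of Theorem \ref{thm5.2}.

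Next I would establish the easy implications. Statement (i) trivially implies (ii). From (ii), since $(A_n)^{(2)}_{\T_n,\S_n}A_n$ and $A^{(2)}_{\T,\S}A$ are idempotents with ranges $\T_n$ and $\T$, the convergence of $((A_n)^{(2)}_{\T_n,\S_n}A_n)_{n}$ to $A^{(2)}_{\T,\S}A$ forces $(\hat\delta(\T_n,\T))_{n}\to 0$ by \cite[Lemma 3.3]{kr1}; likewise the convergence of $(A_n(A_n)^{(2)}_{\T_n,\S_n})_{n}$ to $AA^{(2)}_{\T,\S}$ forces $(\hat\delta(\S_n,\S))_{n}\to 0$ via their null spaces. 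Hence (ii) $\Rightarrow$ (iii), (ii) $\Rightarrow$ (iv), and both (iii) and (iv) clearly imply (v) (each of (iii),(iv) already carries one of the two gap conditions, and the idempotent-convergence half gives the other gap condition by the same \cite[Lemma 3.3]{kr1} argument).

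The substantive step is (v) $\Rightarrow$ (i). Here I would apply the perturbation bound \cite[Theorem 3.5]{DX} directly to $A, A_n\in\L(\X,\Y)$ and the subspace pairs $(\T,\S)$, $(\T_n,\S_n)$: choosing $\kappa=\parallel A\parallel\parallel A^{(2)}_{\T,\S}\parallel$ and picking $n_0$ so large that for $n\ge n_0$ the three quantities
\begin{align*}
&u_n=\hat\delta(\S_n,\S),& &v_n=\hat\delta(\T_n,\T),& &z_n=\parallel A^{(2)}_{\T,\S}\parallel\,\parallel A-A_n\parallel&
\end{align*}
are small enough (the explicit thresholds being as in the proof of Theorem \ref{thm5.2}), one gets
$$
\parallel (A_n)^{(2)}_{\T_n,\S_n}-A^{(2)}_{\T,\S}\parallel\le\frac{(1+\kappa)(v_n+u_n)+(1+u_n)z_n}{1-(1+\kappa)v_n-\kappa u_n-(1+u_n)z_n}\parallel A^{(2)}_{\T,\S}\parallel,
$$
whose right-hand side tends to $0$ by (v) together with $A_n\to A$; this is statement (i). I expect the only real obstacle to be bookkeeping: checking that \cite[Lemma 3.4]{DX} and \cite[Theorem 3.5]{DX} are genuinely stated for operators between two possibly different Banach spaces (they are, which is why this section is separated from the $(b,c)$-inverse treatment), and confirming that $(A_n)^{(2)}_{\T_n,\S_n}$ is defined on $\Y$ with values in $\X$ so that the composites $A_n(A_n)^{(2)}_{\T_n,\S_n}\in\L(\Y)$ and $(A_n)^{(2)}_{\T_n,\S_n}A_n\in\L(\X)$ make sense, after which the argument is identical to Theorem \ref{thm5.2}. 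Finally, (i) $\Rightarrow$ (v) also follows from the chain above, closing the equivalence.
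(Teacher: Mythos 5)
Your proposal follows the paper's own route almost verbatim: the identities $\R((A_n)^{(2)}_{\T_n,\S_n}A_n)=\T_n$, $\N(A_n(A_n)^{(2)}_{\T_n,\S_n})=\S_n$ (and likewise for $A$), the easy implications via \cite[Lemma 3.3]{kr1} applied to the idempotents $Q_n=(A_n)^{(2)}_{\T_n,\S_n}A_n$ and $P_n=A_n(A_n)^{(2)}_{\T_n,\S_n}$, and the substantive implication (v) $\Rightarrow$ (i) via the perturbation bound \cite[Theorem 3.5]{DX} in $\L(\X,\Y)$, which is indeed stated for operators between two different Banach spaces. That part is fine.

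There is, however, one genuine gap: unlike Theorem \ref{thm5.2}, the present theorem does \emph{not} assume $A^{(2)}_{\T,\S}\neq 0$, and the degenerate case $A^{(2)}_{\T,\S}=0$ (equivalently $\T=0$, $\S=\Y$, which the hypotheses do allow, since $A^{(2)}_{0,\Y}=0$ exists for every $A$) is not covered by your argument. In that case $\kappa=\parallel A\parallel\parallel A^{(2)}_{\T,\S}\parallel=0$ and $z_n=0$, so the thresholds you import from the proof of Theorem \ref{thm5.2} (in particular $z_n<\frac{2\kappa}{(1+\kappa)(4+\kappa)}$) cannot be met and the quantitative estimate from \cite[Theorem 3.5]{DX} cannot be invoked as you state it. The paper disposes of this case separately and directly: if $\T=0$, then by the definition of the gap $\hat{\delta}(\T_n,0)=1$ whenever $\T_n\neq 0$, so the hypothesis $(\hat{\delta}(\T_n,\T))_{n\in\mathbb{N}}\to 0$ forces $\T_n=0$ for all $n\ge n_0$, hence $(A_n)^{(2)}_{\T_n,\S_n}=0$ for such $n$ and (i) holds trivially; only after this reduction does one assume $A^{(2)}_{\T,\S}\neq 0$ and apply \cite[Theorem 3.5]{DX}. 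Adding this short case distinction (it is the same device used in Remark \ref{rema500}) completes your proof; everything else matches the paper's argument.
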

\begin{proof} First define 
\begin{align*}
&P=AA^{(2)}_{\T, \S},&  &P_n=A_n(A_n)^{(2)}_{\T_n, \S_n},& &Q=A^{(2)}_{\T, \S}A,&  
&Q_n=(A_n)^{(2)}_{\T_n, \S_n}A_n&\\
\end{align*} 
($P$, $P_n\in\L(\Y)^\bullet$ and $Q$, $Q_n\in \L(\X)^\bullet$, $n\in\mathbb{N}$). Note that 
\begin{align*}
&\R(Q)=\T,& &\R(Q_n)=\T_n,& &\N(P)=\S,& &\N(P_n)=\S_n.&\\
\end{align*}

It is evident that statement (i) implies statement (ii). 

Now, according to \cite[Lemma 3.3]{kr1}, if $(P_n)_{n\in\mathbb{N}}$ converges to $P$ 
(respectively $(Q_n)_{n\in\mathbb{N}}$ converges to $Q$), then  $(\hat{\delta}(\S_n, \S))_{n\in\mathbb{N}}$ (respectively $(\hat{\delta}(\T_n, \T))_{n\in\mathbb{N}}$)
converges to $0$. Thus, statement (ii) implies statement (iii) (respectively statement (iv)), which in turn implies statement (v).

Suppose that statement (v) holds. If $A^{(2)}_{\T, \S}=0$, then $\T=0$ and $\S=\Y$. In particular,  $(\hat{\delta}(\T_n, 0))_{n\in\mathbb{N}}$ converges to $0$.
However, according to  \cite[Chapter 2, Section 2, Subsection 1]{K}, if $\T_n\neq 0$, $\hat{\delta}(\T_n, 0)=1$. Thus, there exists $n_0\in\mathbb{N}$
such that for each $n\ge n_0$, $\T_n=0$, which implies that  $(A_n)^{(2)}_{\T_n, \S_n}=0$ ($n\ge n_0$). To conclude the proof, assume that $A^{(2)}_{\T, \S}\neq 0$ and
appy \cite[Theorem 3.5]{DX}. 
\end{proof}

In the follwing result the Moore-Penrose inverse will be used to characterize the continuity of the outer inverse \OIP. Although it will not be used in this article, recall
that given a Banach space $\X$ and $P$ and $Q\in\L(\X)^\bullet$ such that $P$ and $Q$ are hermitian, if $\R (P)=\R(Q)$, then $P=Q$ (\hspace{-1pt}\cite[Theorem 2.2]{P}). 
In particular, given a subspace $\M\subseteq \X$, there exists at most one hermitian idempotent $R$ such that $\R(R)=\M$.

\begin{thm}\label{thm7.2} Let $\X$ and $\Y$ be Banach spaces and consider $A\in\L(\X, \Y)$ and two subspaces $\T\subseteq \X$
and $\S\subseteq \Y$ such that $A^{(2)}_{\T, \S}$ exists. Let $(A_n)_{n\in\mathbb{N}}\subset\L(\X, \Y)$ and consider $(\T_n)_{n\in\mathbb{N}}$ and   $(\S_n)_{n\in\mathbb{N}}$
two sequences of subspaces of $\X$ and $\Y$, respectively, such that $(A_n)^{(2)}_{\T_n, \S_n}$ exists, for each $n\in\mathbb{N}$. 
Suppose, in addition, that there exit hermitian idempotents $U\in\L(\X)$ \rm (\it respectively $V\in\L(\Y)$\rm ) \it and $U_n\in\L(\X)$ \rm(\it respectively $V_n\in\L(\Y)$\rm ) \it
such that $\R(U)=\T$ and $\R(U_n)=\T_n$ \rm (\it respectively $\N(V)=\S$ and  $\N(V_n)=\S_n$\rm ), \it  $n\in\mathbb{N}$. If $(A_n)_{n\in\mathbb{N}}$ converges to $A$,
then the following statements are equivalent.
\begin{enumerate}[{\rm (i)}] 
\item The sequence $((A_n)^{(2)}_{\T_n, \S_n})_{n\in\mathbb{N}}$ converges to $A^{(2)}_{\T, \S}$.
\item The sequence  $((A_n)^{(2)}_{\T_n, \S_n}A_n)_{n\in\mathbb{N}}$ converges to $A^{(2)}_{\T, \S}A$  and the sequences  $(V(I-V_n))_{n\in\mathbb{N}}$ and $(V_n(I-V))_{n\in\mathbb{N}}$ converge to 0.
\item The sequence $(A_n(A_n)^{(2)}_{\T_n, \S_n})_{n\in\mathbb{N}}$ converges to $AA^{(2)}_{\T, \S}$ and the sequences $((I-U)U_n)_{n\in\mathbb{N}}$ and $((I-U_n)U)_{n\in\mathbb{N}}$
converge to 0.
\item The sequences $(V(I-V_n))_{n\in\mathbb{N}}$, $(V_n(I-V))_{n\in\mathbb{N}}$, $((I-U)U_n)_{n\in\mathbb{N}}$ and $((I-U_n)U)_{n\in\mathbb{N}}$ converge to 0.
\end{enumerate} 
\end{thm}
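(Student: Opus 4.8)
The plan is to derive Theorem~\ref{thm7.2} from Theorem~\ref{thm7.1} by rewriting the two gap conditions $\hat{\delta}(\T_n,\T)\to 0$ and $\hat{\delta}(\S_n,\S)\to 0$ appearing there in terms of the hermitian idempotents $U$, $U_n$, $V$, $V_n$. Everything hinges on the identity
$$
\hat{\delta}(\R(P),\R(Q))=\max\{\parallel (I-P)Q\parallel,\ \parallel (I-Q)P\parallel\},
$$
valid for any two hermitian idempotents $P$, $Q\in\L(\X)^\bullet$. Granting this, the theorem follows at once: applying it to $P=U_n$, $Q=U$ and using $\R(U)=\T$, $\R(U_n)=\T_n$ gives $\hat{\delta}(\T_n,\T)=\max\{\parallel (I-U)U_n\parallel,\parallel (I-U_n)U\parallel\}$, so $\hat{\delta}(\T_n,\T)\to 0$ if and only if $(I-U)U_n\to 0$ and $(I-U_n)U\to 0$; similarly, since $V$ is idempotent one has $\S=\N(V)=\R(I-V)$ and $\S_n=\R(I-V_n)$, with $I-V$ and $I-V_n$ again hermitian idempotents, so applying the identity to $P=I-V_n$, $Q=I-V$ gives $\hat{\delta}(\S_n,\S)=\max\{\parallel V(I-V_n)\parallel,\parallel V_n(I-V)\parallel\}$, whence $\hat{\delta}(\S_n,\S)\to 0$ if and only if $V(I-V_n)\to 0$ and $V_n(I-V)\to 0$. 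Feeding these two equivalences into Theorem~\ref{thm7.1} identifies statement~(i) here with statement~(i) of that theorem, statement~(ii) with statement~(iii), statement~(iii) with statement~(iv), and statement~(iv) with statement~(v); since those five statements are equivalent by Theorem~\ref{thm7.1}, the desired equivalences follow.

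It thus remains to prove the displayed identity, and the one nontrivial ingredient is that a nonzero hermitian idempotent of a unital Banach algebra has norm $1$. Indeed, if $P$ is such an idempotent then $\sigma(P)\subseteq\{0,1\}$, so its spectral radius is at most $1$, and by Sinclair's theorem the norm of a hermitian element equals its spectral radius (see \cite{BD}); hence $\parallel P\parallel\le 1$, and applying the same to the hermitian idempotent $I-P$ gives $\parallel I-P\parallel\le 1$. From this I obtain, for every $z\in\X$, that $\mathrm{dist}(z,\R(P))=\parallel (I-P)z\parallel$: the bound $\le$ holds because $Pz\in\R(P)$, while for $\ge$ one notes that any $w\in\R(P)$ satisfies $(I-P)w=0$, so $\parallel z-w\parallel\ge\parallel (I-P)(z-w)\parallel=\parallel (I-P)z\parallel$. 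Consequently $\delta(\R(Q),\R(P))=\sup\{\parallel (I-P)z\parallel : z\in\R(Q),\ \parallel z\parallel=1\}$; since every such $z$ equals $Qz$ and $\parallel Q\parallel\le 1$, this supremum is exactly $\parallel (I-P)Q\parallel$ (the inequality $\le$ is immediate, and $\ge$ follows by evaluating $(I-P)Q$ at $Qy/\parallel Qy\parallel$ for $y$ almost norm-attaining). Interchanging $P$ and $Q$ and taking the maximum yields the identity; the degenerate cases $P\in\{0,I\}$ or $Q\in\{0,I\}$ are checked directly against the convention $\delta(0,\cdot)=0$.

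The main obstacle is precisely this identity, and within it the distance formula $\mathrm{dist}(z,\R(P))=\parallel (I-P)z\parallel$, which is where hermiticity is indispensable: for a general idempotent $P$ one has only $\parallel (I-P)z\parallel\le\parallel I-P\parallel\,\mathrm{dist}(z,\R(P))$, and the gap need not then be controlled by the norm of $(I-P)Q$. Everything else is either routine bookkeeping or a direct appeal to Theorem~\ref{thm7.1}. I note that the identity above is the $\L(\X)$-analogue of \cite[Lemma~2.2]{V2}, used in the proof of Theorem~\ref{thm5.4} with the roles of $P$ and $Q$ played there by $bb^\dag$ and $c^\dag c$; here $U$ and $I-V$ take their place.
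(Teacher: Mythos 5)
Your proposal is correct and follows the same route as the paper: reduce to Theorem~\ref{thm7.1} by converting the conditions $\hat{\delta}(\T_n,\T)\to 0$ and $\hat{\delta}(\S_n,\S)\to 0$ into the stated norm conditions on the hermitian idempotents $U, U_n$ and $I-V, I-V_n$, matching statements (i)--(iv) with statements (i), (iii), (iv), (v) of that theorem. The only difference is that the paper simply cites \cite[Lemma 2.2]{V2} for the identity $\gap{\R(P)}{\R(Q)}=\max\{\parallel(I-Q)P\parallel,\parallel(I-P)Q\parallel\}$, whereas you reprove it from scratch via Sinclair's theorem ($\parallel P\parallel=r(P)\le 1$ for a hermitian idempotent) and the distance formula $\mathrm{dist}(z,\R(P))=\parallel(I-P)z\parallel$ -- a correct, self-contained substitute for the cited lemma.
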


\begin{proof} Note that $\hat{\delta}(\T_n, \T)=\hat{\delta}(\R (U_n), \R(U))$. Thus, according to \cite[Lemma 2.2]{V2}, the sequence $(\hat{\delta}(\T_n, \T))_{n\in\mathbb{N}}$
converges to 0 if and only if the sequences $((I-U)U_n)_{n\in\mathbb{N}}$ and $((I-U_n)U)_{n\in\mathbb{N}}$ converge to 0. Similarly, since $\hat{\delta}(\S_n, \S)=\hat{\delta}(\R (I-V_n), \R(I-V))$,
according to \cite[Lemma 2.2]{V2}, the sequence $(\hat{\delta}(\S_n, \S))_{n\in\mathbb{N}}$
converges to 0 if and only if the sequences  $(V(I-V_n))_{n\in\mathbb{N}}$ and $(V_n(I-V))_{n\in\mathbb{N}}$ converge to 0. To conclude the proof, apply Theorem \ref{thm7.1}.
\end{proof}

Next the continuity of the outer inverse \OIP will be studied in the context of Hilbert spaces.

\begin{thm}\label{thm7.3} Let $\H$ and $\K$ be Hilbert spaces and consider $A\in\L(\H, \K)$ and two subspaces $\T\subseteq \H$
and $\S\subseteq \K$ such that $A^{(2)}_{\T, \S}$ exists. Let $(A_n)_{n\in\mathbb{N}}\subset\L(\H, \K)$ and consider $(\T_n)_{n\in\mathbb{N}}$ and   $(\S_n)_{n\in\mathbb{N}}$
two sequences of subspaces of $\H$ and $\K$, respectively, such that $(A_n)^{(2)}_{\T_n, \S_n}$ exists, for each $n\in\mathbb{N}$. 
If $(A_n)_{n\in\mathbb{N}}$ converges to $A$,
then the following statements are equivalent.
\begin{enumerate}[{\rm (i)}] 
\item The sequence $((A_n)^{(2)}_{\T_n, \S_n})_{n\in\mathbb{N}}$ converges to $A^{(2)}_{\T, \S}$.
\item The sequences $((A_n)^{(2)}_{\T_n, \S_n}A_n)_{n\in\mathbb{N}}$ and $(A_n(A_n)^{(2)}_{\T_n, \S_n})_{n\in\mathbb{N}}$ converge to $A^{(2)}_{\T, \S}A$ and $AA^{(2)}_{\T, \S}$, respectively.
\item The sequence  $((A_n)^{(2)}_{\T_n, \S_n}A_n)_{n\in\mathbb{N}}$ \rm (\it respectively $(P_{\S_n^\perp}^\perp)_{n\in\mathbb{N}}$\rm ) \it converges to $A^{(2)}_{\T, \S}A$  \rm (\it respectively to 
$P_{\S^\perp}^\perp$\rm).\it
\item The sequence $(A_n(A_n)^{(2)}_{\T_n, \S_n})_{n\in\mathbb{N}}$ \rm(\it respectively $(P_{\T_n}^\perp)_{n\in\mathbb{N}}$\rm) \it converges to $AA^{(2)}_{\T, \S}$ \rm (\it respectively to $P_{\T}^\perp$\rm).\it
\item The sequences $(P_{\S_n^\perp}^\perp)_{n\in\mathbb{N}}$ and $(P_{\T_n}^\perp)_{n\in\mathbb{N}}$ converge to $P_{\S^\perp}^\perp$ and $P_{\T}^\perp$, respectively.
\end{enumerate} 
\end{thm}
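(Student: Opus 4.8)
The plan is to deduce Theorem~\ref{thm7.3} from Theorem~\ref{thm7.1} by rewriting the two gap hypotheses occurring there, namely $\hat{\delta}(\T_n,\T)\to 0$ and $\hat{\delta}(\S_n,\S)\to 0$, as norm convergence of the orthogonal projectors, $P_{\T_n}^\perp\to P_{\T}^\perp$ and $P_{\S_n^\perp}^\perp\to P_{\S^\perp}^\perp$. This is available because $\L(\H)$ and $\L(\K)$ are $C^*$-algebras, in which the hermitian idempotents are exactly the orthogonal projectors.

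First I would record the routine preliminaries. Writing $B=A^{(2)}_{\T,\S}$, the identity $B=BAB$ gives $\T=\R(B)=\R(BA)$ with $BA\in\L(\H)^\bullet$, so $\T$ is closed; likewise $\S=\N(B)$ is closed, and the same applies to $\T_n$ and $\S_n$. Hence $P_{\T}^\perp,P_{\T_n}^\perp\in\L(\H)^\bullet$ and $P_{\S^\perp}^\perp,P_{\S_n^\perp}^\perp\in\L(\K)^\bullet$ are well defined hermitian idempotents with $\R(P_{\T}^\perp)=\T$, $\R(P_{\T_n}^\perp)=\T_n$, $\N(P_{\S^\perp}^\perp)=\S$ and $\N(P_{\S_n^\perp}^\perp)=\S_n$ for all $n\in\mathbb{N}$.

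The heart of the proof is the claim that, for closed subspaces $\M,\M_n$ of a Hilbert space, $(\hat{\delta}(\M_n,\M))_{n\in\mathbb{N}}$ converges to $0$ if and only if $(P_{\M_n}^\perp)_{n\in\mathbb{N}}$ converges to $P_{\M}^\perp$. Since $P_{\M}^\perp$ is a hermitian idempotent with range $\M$, \cite[Lemma~2.2]{V2} gives $\hat{\delta}(\M_n,\M)=\max\{\|(I-P_{\M}^\perp)P_{\M_n}^\perp\|,\|(I-P_{\M_n}^\perp)P_{\M}^\perp\|\}$, so $\hat{\delta}(\M_n,\M)\to 0$ is equivalent to the convergence to $0$ of both one-sided products. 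The implication $\Leftarrow$ is immediate from $(I-P_{\M}^\perp)P_{\M}^\perp=0$; for $\Rightarrow$ I would split $P_{\M_n}^\perp=P_{\M}^\perp P_{\M_n}^\perp P_{\M}^\perp+P_{\M}^\perp P_{\M_n}^\perp(I-P_{\M}^\perp)+(I-P_{\M}^\perp)P_{\M_n}^\perp$, observe that the last two summands are norm-dominated by the two vanishing products (using self-adjointness, $\|P_{\M}^\perp P_{\M_n}^\perp(I-P_{\M}^\perp)\|=\|(I-P_{\M}^\perp)P_{\M_n}^\perp P_{\M}^\perp\|\le\|(I-P_{\M}^\perp)P_{\M_n}^\perp\|$), and rewrite the diagonal term as $P_{\M}^\perp-P_{\M}^\perp(I-P_{\M_n}^\perp)P_{\M}^\perp\to P_{\M}^\perp$. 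This is precisely the decomposition argument used in the proof of Theorem~\ref{thm5.4}; it is the only step that is not mere bookkeeping, and I expect it to be the main (though modest) obstacle, the subtlety being the passage from one-sided product convergence to genuine norm convergence of the projectors.

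Finally, applying the claim with $\M_n=\T_n,\M=\T$ yields $\hat{\delta}(\T_n,\T)\to 0\iff P_{\T_n}^\perp\to P_{\T}^\perp$, and applying it with $\M_n=\S_n,\M=\S$, together with $P_{\M^\perp}^\perp=I-P_{\M}^\perp$, yields $\hat{\delta}(\S_n,\S)\to 0\iff P_{\S_n^\perp}^\perp\to P_{\S^\perp}^\perp$. With these two substitutions, statements {\rm(i)}--{\rm(v)} of Theorem~\ref{thm7.3} coincide verbatim with statements {\rm(i)}--{\rm(v)} of Theorem~\ref{thm7.1}, so the equivalences follow from the latter. Alternatively, one may feed $U=P_{\T}^\perp$, $U_n=P_{\T_n}^\perp$, $V=P_{\S^\perp}^\perp$, $V_n=P_{\S_n^\perp}^\perp$ into Theorem~\ref{thm7.2} and then collapse each pair of product conditions there to a single projector convergence by the same decomposition.
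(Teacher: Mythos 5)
Your proposal is correct and follows essentially the same route as the paper: the paper also takes $U=P_{\T}^\perp$, $U_n=P_{\T_n}^\perp$, $V=P_{\S^\perp}^\perp$, $V_n=P_{\S_n^\perp}^\perp$, converts the gap/one-sided product conditions of Theorems \ref{thm7.1} and \ref{thm7.2} (via \cite[Lemma 2.2]{V2}) into norm convergence of the orthogonal projectors, and the key step is exactly your decomposition-plus-self-adjointness argument showing that vanishing of the products $(I-U)U_n$ and $(I-U_n)U$ forces $U_n\to U$. No gaps; the only differences are cosmetic (you group the splitting of $U_n$ into three terms rather than four and invoke Theorem \ref{thm7.1} directly, which the paper does as well after passing through Theorem \ref{thm7.2}).
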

\begin{proof} Note that 
\begin{align*}
&U=P_{\T}^\perp,& &U_n=P_{\T_n}^\perp,& &V=P_{\S^\perp}^\perp,& &V_n=P_{\S_n^\perp}^\perp,&\\
\end{align*}
satisfies the hypoteses of Theorem \ref{thm7.2}. First it will be proved that the sequences $((I-U)U_n)_{n\in\mathbb{N}}$ and $((I-U_n)U)_{n\in\mathbb{N}}$
converge to 0 if and only if the sequence $(P_{\T_n}^\perp)_{n\in\mathbb{N}}$ converges to $P_{\T}^\perp$. 

Since $U$, $U_n\in\L(\H)$ are self-adjoint, the sequences $((I-U)U_n)_{n\in\mathbb{N}}$ and $((I-U_n)U)_{n\in\mathbb{N}}$
converge to 0 if and only if the sequences $(U_n(I-U))_{n\in\mathbb{N}}$ and $(U(I-U_n))_{n\in\mathbb{N}}$
converge to 0. In particular, the sequences $((I-U)U_nU)_{n\in\mathbb{N}}$, $((I-U)U_n(I-U))_{n\in\mathbb{N}}$ and  $(UU_n(I-U))_{n\in\mathbb{N}}$
converge to 0. In addition, since $(U(I-U_n))_{n\in\mathbb{N}}$ converges to 0, the sequence $(UU_nU)_{n\in\mathbb{N}}$ converges to $U$. 
However, since
$$
U_n=UU_nU +UU_n(I-U_n)+(I-U)U_nU+(I-U_n)U_n(I-U),
$$
the sequence $(P_{\T_n}^\perp)_{n\in\mathbb{N}}$ converges to $P_{\T}^\perp$. 

A similar argument proves that the sequences $(V(I-V_n))_{n\in\mathbb{N}}$ and $(V_n(I-V))_{n\in\mathbb{N}}$ converge to 0
if and only if the sequence $(P_{\S_n^\perp}^\perp)_{n\in\mathbb{N}}$ converges to $P_{\S^\perp}^\perp$.

To conlcude the proof, apply Theorem \ref{thm7.1} and Theorem \ref{thm7.2}.
\end{proof}

To study the differentiation of the outer inverse \OIP, it is necessary to present a prelimiery result first.
 
\begin{lem}\label{lem7.4} Let $\X$ and $\Y$ be two Banach spaces and consider $A$, $B\in\L(\X, \Y)$. Let $\T$, $\V\subseteq \X$ and $S$, $\U\subseteq \Y$
be two pairs of subspaces such that $A^{(2)}_{\T, \S}$ and $B^{(2)}_{\V, \U}$ exist and consider  idempotents $P_{\T}$, $P_{\V}\in\L (\X)$ and
$P_{\S}$, $P_{\U}\in\L(\Y)$ such that $\R(P_{\T})=\T$,  $\R(P_{\V})=\V$, $\R(P_{\S})=\S$ and $\R(P_{\U})=\U$. Then
\begin{align*}
B^{(2)}_{\V, \U}-A^{(2)}_{\T, \S}=&B^{(2)}_{\V, \U}(P_{\S}-P_{\U})(I_{\Y}-AA^{(2)}_{\T, \S})+(I_{\X}-B^{(2)}_{\V, \U}B)(P_{\V}-P_{\T})A^{(2)}_{\T, \S}\\
& -B^{(2)}_{\V, \U}(B-A)A^{(2)}_{\T, \S}.\\
\end{align*}
\end{lem}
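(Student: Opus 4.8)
The plan is to reduce the identity to the elementary ``difference formula''
$$B^{(2)}_{\V, \U} - A^{(2)}_{\T, \S} = B^{(2)}_{\V, \U}(I_{\Y} - AA^{(2)}_{\T, \S}) - (I_{\X} - B^{(2)}_{\V, \U}B)A^{(2)}_{\T, \S} - B^{(2)}_{\V, \U}(B - A)A^{(2)}_{\T, \S},$$
which holds for purely algebraic reasons: expanding the right-hand side, the two cross terms $B^{(2)}_{\V, \U}AA^{(2)}_{\T, \S}$ and $B^{(2)}_{\V, \U}BA^{(2)}_{\T, \S}$ each occur once with each sign and cancel. So the real content is to show that the first two summands of the asserted formula coincide with $B^{(2)}_{\V, \U}(I_{\Y} - AA^{(2)}_{\T, \S})$ and $-(I_{\X} - B^{(2)}_{\V, \U}B)A^{(2)}_{\T, \S}$, respectively.

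For the first summand I would use the description of $A^{(2)}_{\T, \S}$ recalled after Definition~\ref{def30}: $AA^{(2)}_{\T, \S}$ is the idempotent onto $A(\T)$ along $\S$, hence $\R(I_{\Y} - AA^{(2)}_{\T, \S}) = \S$; since $P_{\S}$ is an idempotent with $\R(P_{\S}) = \S$ it fixes $\S$, so $P_{\S}(I_{\Y} - AA^{(2)}_{\T, \S}) = I_{\Y} - AA^{(2)}_{\T, \S}$, while $\R(P_{\U}) = \U = \N(B^{(2)}_{\V, \U})$ gives $B^{(2)}_{\V, \U}P_{\U} = 0$. Therefore $B^{(2)}_{\V, \U}(P_{\S} - P_{\U})(I_{\Y} - AA^{(2)}_{\T, \S}) = B^{(2)}_{\V, \U}(I_{\Y} - AA^{(2)}_{\T, \S})$. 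Symmetrically, $\R(A^{(2)}_{\T, \S}) = \T = \R(P_{\T})$ gives $P_{\T}A^{(2)}_{\T, \S} = A^{(2)}_{\T, \S}$, whereas $B^{(2)}_{\V, \U}B$ is an idempotent with range $\V = \R(P_{\V})$, so $(I_{\X} - B^{(2)}_{\V, \U}B)P_{\V} = 0$; hence $(I_{\X} - B^{(2)}_{\V, \U}B)(P_{\V} - P_{\T})A^{(2)}_{\T, \S} = -(I_{\X} - B^{(2)}_{\V, \U}B)A^{(2)}_{\T, \S}$. Substituting both identities into the right-hand side of the statement turns it into the elementary difference formula above, which completes the proof.

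The only point requiring care is the bookkeeping of which of the four idempotents $P_{\T}, P_{\S}, P_{\V}, P_{\U}$ acts as the identity, and which as zero, on the relevant range or kernel; everything follows at once from the facts that $A^{(2)}_{\T, \S}$ (resp.\ $B^{(2)}_{\V, \U}$) is an outer inverse of $A$ (resp.\ $B$) with range $\T$ (resp.\ $\V$) and null space $\S$ (resp.\ $\U$), together with the ranges and null spaces of the associated idempotents $AA^{(2)}_{\T, \S}$, $A^{(2)}_{\T, \S}A$ and their $B$-analogues, all read off from the decomposition in Definition~\ref{def30}. There is no genuine obstacle here: the identity is shallow and the argument is essentially the two substitutions above.
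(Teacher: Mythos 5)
Your proposal is correct and is essentially the paper's own argument: the paper's proof rests on exactly the same four idempotent relations ($(I_{\X}-B^{(2)}_{\V, \U}B)P_{\V}=0$, $P_{\T}A^{(2)}_{\T, \S}=A^{(2)}_{\T, \S}$, $B^{(2)}_{\V, \U}P_{\U}=0$, and $P_{\S}$ acting as the identity on $\R(I_{\Y}-AA^{(2)}_{\T, \S})=\S$), combined with the same elementary cancellation of the cross terms $B^{(2)}_{\V, \U}AA^{(2)}_{\T, \S}$ and $B^{(2)}_{\V, \U}BA^{(2)}_{\T, \S}$. The only difference is cosmetic: you substitute into the asserted formula to reduce it to the trivial difference identity, while the paper builds the two halves separately and adds them.
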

\begin{proof} Let $P_{\T}$, $P_{\V}\in \L(X)^\bullet$ be such that  $\R(P_{\T})=\T$ and  $\R(P_{\V})=\V$. According to \cite[Lemma 1]{LYZW},
\begin{align*}
&(I_{\X}-B^{(2)}_{\V, \U}B)P_{\V}=0,& &P_{\T}A^{(2)}_{\T, \S}=A^{(2)}_{\T, \S}.&\\
\end{align*}
Then,
\begin{align*}
&B^{(2)}_{\V, \U}BA^{(2)}_{\T, \S}-A^{(2)}_{\T, \S}= -(I_{\X}-B^{(2)}_{\V, \U}B)P_{\T}A^{(2)}_{\T, \S}=  (I_{\X}-B^{(2)}_{\V, \U}B)(P_{\V}-P_{\T})A^{(2)}_{\T, \S}.& \\
\end{align*}

Now consider $P_{\S}$, $P_{\U}\in\L({\Y})^\bullet$  such that $\R(P_{\S})=\S$ and $\R(P_{\U})=\U$. According to \cite[Lemma 1]{LYZW},
\begin{align*}
&(I_{\Y}-P_{\S})(I_{\Y}-AA^{(2)}_{\T, \S})=0,& &B^{(2)}_{\V, \U}=B^{(2)}_{\V, \U}(I_{\Y}-P_{\U}).&
\end{align*}
Consequently,
\begin{align*}
B^{(2)}_{\V, \U}-B^{(2)}_{\V, \U}AA^{(2)}_{\T, \S}&=B^{(2)}_{\V, \U}(I_{\Y}-P_{\U})(I_{\Y}-AA^{(2)}_{\T, \S})=B^{(2)}_{\V, \U}((I_{\Y}-P_{\U})-(I_{\Y}-P_{\S}) )(I_{\Y}-AA^{(2)}_{\T, \S})\\
&=B^{(2)}_{\V, \U}(P_{\S}-P_{\U})(I_{\Y}-AA^{(2)}_{\T, \S}).\\
\end{align*}

Therefore,
\begin{align*}
B^{(2)}_{\V, \U}-A^{(2)}_{\T, \S}&= B^{(2)}_{\V, \U}(P_{\S}-P_{\U})(I_{\Y}-AA^{(2)}_{\T, \S}) +B^{(2)}_{\V, \U}AA^{(2)}_{\T, \S}\\
&\hskip.4truecm +(I_{\X}-B^{(2)}_{\V, \U}B)(P_{\V}-P_{\T})A^{(2)}_{\T, \S}-B^{(2)}_{\V, \U}BA^{(2)}_{\T, \S}\\
&= B^{(2)}_{\V, \U}(P_{\S}-P_{\U})(I_{\Y}-AA^{(2)}_{\T, \S})+(I_{\X}-B^{(2)}_{\V, \U}B)(P_{\V}-P_{\T})A^{(2)}_{\T, \S}\\
&\hskip.4truecm -B^{(2)}_{\V, \U}(B-A)A^{(2)}_{\T, \S}.\\
\end{align*}
\end{proof}

In what follows, the differentiation of the outer inverse \OIP will be studied.

\begin{thm}\label{thm7.5} Let $\X$ and $\Y$ be Banach spaces and consider $J\subseteq \mathbb{R}$ an open set.
Suppose that there exist functions ${\mathbf A}\colon J\to \L(\X, \Y)$, ${\mathbf P}\colon J\to \L(\X)^\bullet$  and ${\mathbf Q}\colon J\to\L(\Y)^\bullet$
such that for each $t\in   J$, $({\mathbf A}(t))^{(2)}_{\R({\mathbf P}(t)),\R({\mathbf Q}(t))}$ exists. If the functions ${\mathbf A}$, ${\mathbf P}$
and ${\mathbf Q}$ are differentiable at $t_0$, then the following statements are equivalent.
\begin{enumerate}[\rm (i)]
\item The function  ${\mathbf A}^{(2)}_{{\mathbf P},{\mathbf Q}}\colon J\to \L(\Y, \X)$ is continuous at $t_0$,
\item the function  ${\mathbf A}^{(2)}_{{\mathbf P},{\mathbf Q}}\colon J\to \L(\Y, \X)$ is differentiable at $t_0$,
\end{enumerate} 
where ${\mathbf A}^{(2)}_{{\mathbf P},{\mathbf Q}}(t)=({\mathbf A}(t))^{(2)}_{\R({\mathbf P}(t)),\R({\mathbf Q}(t))}$.\par
\noindent Furthermore,
\begin{align*}
({\mathbf A}^{(2)}_{{\mathbf P},{\mathbf Q}})'(t_0)=&-{\mathbf A}^{(2)}_{{\mathbf P},{\mathbf Q}}(t_0){\mathbf Q}'(t_0)(I_{\Y}-{\mathbf A}(t_0){\mathbf A}^{(2)}_{{\mathbf P},{\mathbf Q}}(t_0))
+(I_{\X}-{\mathbf A}^{(2)}_{{\mathbf P},{\mathbf Q}}(t_0){\mathbf A}(t_0)){\mathbf P}'(t_0){\mathbf A}^{(2)}_{{\mathbf P},{\mathbf Q}}(t_0)\\
&-{\mathbf A}^{(2)}_{{\mathbf P},{\mathbf Q}}(t_0){\mathbf A}'(t_0){\mathbf A}^{(2)}_{{\mathbf P},{\mathbf Q}}(t_0).\\
\end{align*}
\end{thm}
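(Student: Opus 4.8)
The plan is to deduce both implications from the difference formula in Lemma \ref{lem7.4}, in the same spirit in which Theorem \ref{thm6.1} is obtained from \cite[Corollary 7.12]{b2}. Since differentiability at $t_0$ trivially forces continuity at $t_0$, the only implication requiring an argument is (i) $\Rightarrow$ (ii), and the displayed formula for $({\mathbf A}^{(2)}_{{\mathbf P},{\mathbf Q}})'(t_0)$ will fall out of the same computation.

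First I would fix $t_0\in J$ and, for $t\in J$, apply Lemma \ref{lem7.4} with $A={\mathbf A}(t_0)$, $B={\mathbf A}(t)$ and with the four idempotents of that lemma taken to be $P_{\T}={\mathbf P}(t_0)$, $P_{\V}={\mathbf P}(t)$, $P_{\S}={\mathbf Q}(t_0)$, $P_{\U}={\mathbf Q}(t)$. This is legitimate: the lemma only requires idempotents whose ranges are the prescribed subspaces, and ${\mathbf P}(t_0)$, ${\mathbf P}(t)$, ${\mathbf Q}(t_0)$, ${\mathbf Q}(t)$ have ranges $\R({\mathbf P}(t_0))$, $\R({\mathbf P}(t))$, $\R({\mathbf Q}(t_0))$, $\R({\mathbf Q}(t))$ by definition, while by hypothesis $A^{(2)}_{\T,\S}={\mathbf A}^{(2)}_{{\mathbf P},{\mathbf Q}}(t_0)$ and $B^{(2)}_{\V,\U}={\mathbf A}^{(2)}_{{\mathbf P},{\mathbf Q}}(t)$ both exist. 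Lemma \ref{lem7.4} then reads
\begin{align*}
{\mathbf A}^{(2)}_{{\mathbf P},{\mathbf Q}}(t)-{\mathbf A}^{(2)}_{{\mathbf P},{\mathbf Q}}(t_0)=&\,{\mathbf A}^{(2)}_{{\mathbf P},{\mathbf Q}}(t)({\mathbf Q}(t_0)-{\mathbf Q}(t))(I_{\Y}-{\mathbf A}(t_0){\mathbf A}^{(2)}_{{\mathbf P},{\mathbf Q}}(t_0))\\
&+(I_{\X}-{\mathbf A}^{(2)}_{{\mathbf P},{\mathbf Q}}(t){\mathbf A}(t))({\mathbf P}(t)-{\mathbf P}(t_0)){\mathbf A}^{(2)}_{{\mathbf P},{\mathbf Q}}(t_0)\\
&-{\mathbf A}^{(2)}_{{\mathbf P},{\mathbf Q}}(t)({\mathbf A}(t)-{\mathbf A}(t_0)){\mathbf A}^{(2)}_{{\mathbf P},{\mathbf Q}}(t_0).
\end{align*}
Next I would divide this identity by $t-t_0$ and let $t\to t_0$ under the assumption (i). Continuity of ${\mathbf A}^{(2)}_{{\mathbf P},{\mathbf Q}}$ at $t_0$ gives ${\mathbf A}^{(2)}_{{\mathbf P},{\mathbf Q}}(t)\to{\mathbf A}^{(2)}_{{\mathbf P},{\mathbf Q}}(t_0)$; differentiability of ${\mathbf A}$, ${\mathbf P}$, ${\mathbf Q}$ at $t_0$ (in particular their continuity there) gives ${\mathbf A}(t)\to{\mathbf A}(t_0)$ together with $\frac{{\mathbf A}(t)-{\mathbf A}(t_0)}{t-t_0}\to{\mathbf A}'(t_0)$, $\frac{{\mathbf P}(t)-{\mathbf P}(t_0)}{t-t_0}\to{\mathbf P}'(t_0)$ and $\frac{{\mathbf Q}(t)-{\mathbf Q}(t_0)}{t-t_0}\to{\mathbf Q}'(t_0)$. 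Since multiplication in $\L(\X,\Y)$, $\L(\X)$ and $\L(\Y)$ is jointly continuous, each of the three summands on the right has a limit; hence the difference quotient of ${\mathbf A}^{(2)}_{{\mathbf P},{\mathbf Q}}$ at $t_0$ converges, which is (ii), and its limit is exactly the expression in the statement, the minus sign on the first term arising from ${\mathbf Q}(t_0)-{\mathbf Q}(t)=-({\mathbf Q}(t)-{\mathbf Q}(t_0))$.

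There is no serious obstacle here once Lemma \ref{lem7.4} is available: the substance lives in that lemma, and what remains is the routine ``divide and pass to the limit''. The only point worth flagging is the freedom in choosing the idempotents in Lemma \ref{lem7.4}: taking the values of ${\mathbf P}$ and ${\mathbf Q}$ themselves is precisely what makes ${\mathbf P}'(t_0)$ and ${\mathbf Q}'(t_0)$, rather than the derivatives of some auxiliary projection-valued functions, appear in the final formula, in parallel with the role of ${\mathbf b}{\mathbf g}$ and ${\mathbf h}{\mathbf c}$ in Theorem \ref{thm6.1}.
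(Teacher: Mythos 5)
Your proposal is correct and follows essentially the same route as the paper: the paper also reduces to the implication (i) $\Rightarrow$ (ii), applies Lemma \ref{lem7.4} with $B={\mathbf A}(t)$, $A={\mathbf A}(t_0)$ and the idempotents ${\mathbf P}(t)$, ${\mathbf P}(t_0)$, ${\mathbf Q}(t)$, ${\mathbf Q}(t_0)$, then divides by $t-t_0$ and passes to the limit to obtain the stated formula. Your remark on choosing the values of ${\mathbf P}$ and ${\mathbf Q}$ as the idempotents is exactly the point that makes ${\mathbf P}'(t_0)$ and ${\mathbf Q}'(t_0)$ appear, and it is implicit in the paper's argument as well.
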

\begin{proof} It is enough to prove that statement (i) implies statement (ii). This proof can be derived from Lemma \ref{lem7.4}. In fact, according to this result,
\begin{align*}
{\mathbf A}^{(2)}_{{\mathbf P},{\mathbf Q}}(t)-{\mathbf A}^{(2)}_{{\mathbf P},{\mathbf Q}}(t_0)=&-{\mathbf A}^{(2)}_{{\mathbf P},{\mathbf Q}}(t)({\mathbf Q}(t)-
{\mathbf Q}(t_0))(I_{\Y}-{\mathbf A}(t_0){\mathbf A}^{(2)}_{{\mathbf P},{\mathbf Q}}(t_0))\\
&+(I_{\X}-{\mathbf A}^{(2)}_{{\mathbf P},{\mathbf Q}}(t){\mathbf A}(t))({\mathbf P}(t)-{\mathbf P}(t_0)){\mathbf A}^{(2)}_{{\mathbf P},{\mathbf Q}}(t_0)\\
&-{\mathbf A}^{(2)}_{{\mathbf P},{\mathbf Q}}(t)({\mathbf A}(t)-{\mathbf A}(t_0)){\mathbf A}^{(2)}_{{\mathbf P},{\mathbf Q}}(t_0).\\
\end{align*}
\noindent Now divide each term by $t-t_0$ and note that the limit leads to $({\mathbf A}^{(2)}_{{\mathbf P},{\mathbf Q}})'(t_0)$.
\end{proof}

In the Hilbert space operators context, Theorem \ref{thm7.5} can be reformulated as follows.

\begin{cor}\label{cor7.6} Let $\H$ and $\K$ be Hilbert spaces and consider $J\subseteq \mathbb{R}$ an open set.
Suppose that there exist functions ${\mathbf A}\colon J\to \L(\X, \Y)$, ${\mathbf P}^\perp\colon J\to \L(\X)^\bullet$  and ${\mathbf Q}^\perp\colon J\to\L(\Y)^\bullet$
such that for each $t\in   J$, ${\mathbf P}^\perp(t)\in\L(\X)$
 and  ${\mathbf Q}^\perp(t)\in\L(\Y)$ are orthogonal idempotents and $({\mathbf A}(t))^{(2)}_{\R({\mathbf P}^\perp(t)),\R({\mathbf Q}^\perp(t))}$ exists. If the functions ${\mathbf A}$, ${\mathbf P}^\perp$
and ${\mathbf Q}^\perp$ are differentiable at $t_0$, then the following statements are equivalent.
\begin{enumerate}[\rm (i)]
\item The function  ${\mathbf A}^{(2)}_{{\mathbf P}^\perp,{\mathbf Q}^\perp}\colon J\to \L(\Y, \X)$ is continuous at $t_0$,
\item the function  ${\mathbf A}^{(2)}_{{\mathbf P}^\perp,{\mathbf Q}^\perp}\colon J\to \L(\Y, \X)$ is differentiable at $t_0$,
\end{enumerate} 
where ${\mathbf A}^{(2)}_{{\mathbf P}^\perp,{\mathbf Q}^\perp}(t)=({\mathbf A}(t))^{(2)}_{\R({\mathbf P}^\perp(t)),\R({\mathbf Q}^\perp(t))}$.\par
\noindent Furthermore,
\begin{align*}
({\mathbf A}^{(2)}_{{\mathbf P}^\perp,{\mathbf Q}^\perp})'(t_0)=&-{\mathbf A}^{(2)}_{{\mathbf P}^\perp,{\mathbf Q}^\perp}(t_0)({\mathbf Q}^\perp)'(t_0)(I_{\Y}-{\mathbf A}(t_0)
{\mathbf A}^{(2)}_{{\mathbf P}^\perp,{\mathbf Q}^\perp}(t_0))\\
&+(I_{\X}-{\mathbf A}^{(2)}_{{\mathbf P}^\perp,{\mathbf Q}^\perp}(t_0){\mathbf A}(t_0))({\mathbf P}^\perp)'(t_0){\mathbf A}^{(2)}_{{\mathbf P}^\perp,{\mathbf Q}^\perp}(t_0)\\
&-{\mathbf A}^{(2)}_{{\mathbf P}^\perp,{\mathbf Q}^\perp}(t_0){\mathbf A}'(t_0){\mathbf A}^{(2)}_{{\mathbf P}^\perp,{\mathbf Q}^\perp}(t_0).\\
\end{align*}
\end{cor}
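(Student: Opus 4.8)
The plan is to obtain this statement as an immediate specialization of Theorem \ref{thm7.5}. The key observation is that an orthogonal idempotent on a Hilbert space is in particular an idempotent, so that ${\mathbf P}^\perp(t)\in\L(\X)^\bullet$ and ${\mathbf Q}^\perp(t)\in\L(\Y)^\bullet$ for every $t\in J$; moreover, by the very definition of ${\mathbf A}^{(2)}_{{\mathbf P}^\perp,{\mathbf Q}^\perp}$, the prescribed range and null space of $({\mathbf A}(t))^{(2)}_{\R({\mathbf P}^\perp(t)),\R({\mathbf Q}^\perp(t))}$ are $\R({\mathbf P}^\perp(t))$ and $\R({\mathbf Q}^\perp(t))$, so that the standing hypothesis of Theorem \ref{thm7.5}, namely the existence of this outer inverse for each $t$, is satisfied.

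I would then apply Theorem \ref{thm7.5} with ${\mathbf P}={\mathbf P}^\perp$ and ${\mathbf Q}={\mathbf Q}^\perp$. Since ${\mathbf A}$, ${\mathbf P}^\perp$ and ${\mathbf Q}^\perp$ are differentiable at $t_0$ by hypothesis, the theorem yields at once that ${\mathbf A}^{(2)}_{{\mathbf P}^\perp,{\mathbf Q}^\perp}$ is continuous at $t_0$ if and only if it is differentiable at $t_0$, which is exactly the equivalence of (i) and (ii); and the stated formula for $({\mathbf A}^{(2)}_{{\mathbf P}^\perp,{\mathbf Q}^\perp})'(t_0)$ follows by substituting ${\mathbf P}'(t_0)=({\mathbf P}^\perp)'(t_0)$ and ${\mathbf Q}'(t_0)=({\mathbf Q}^\perp)'(t_0)$ into the formula of Theorem \ref{thm7.5}.

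I do not expect a genuine obstacle here; the only point that requires care is the consistent bookkeeping of the two subspaces, namely that the range of the outer inverse is governed by ${\mathbf P}^\perp$ and its null space by ${\mathbf Q}^\perp$, so that in the resulting formula the factor $(I_{\Y}-{\mathbf A}(t_0){\mathbf A}^{(2)}_{{\mathbf P}^\perp,{\mathbf Q}^\perp}(t_0))$ multiplies the ${\mathbf Q}^\perp$-term on the right while $(I_{\X}-{\mathbf A}^{(2)}_{{\mathbf P}^\perp,{\mathbf Q}^\perp}(t_0){\mathbf A}(t_0))$ multiplies the ${\mathbf P}^\perp$-term on the left, exactly as in Lemma \ref{lem7.4} and Theorem \ref{thm7.5}. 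Should one prefer to avoid invoking Theorem \ref{thm7.5} directly, the identical conclusion can be reached by rerunning its proof through the expansion in Lemma \ref{lem7.4}, now taking the orthogonal idempotents ${\mathbf P}^\perp(t)$ and ${\mathbf Q}^\perp(t)$ in the roles of $P_{\T},P_{\V}$ and $P_{\S},P_{\U}$; but this detour is not necessary.
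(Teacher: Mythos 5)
Your proposal is correct and coincides with the paper's own argument: the paper likewise proves Corollary \ref{cor7.6} by applying Theorem \ref{thm7.5} with ${\mathbf P}={\mathbf P}^\perp$ and ${\mathbf Q}={\mathbf Q}^\perp$, using that orthogonal idempotents are in particular idempotents. Your remarks on the bookkeeping of the range and null space terms are consistent with Lemma \ref{lem7.4} and require no further justification.
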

\begin{proof} Apply Theorem \ref{thm7.5} to the case under consideration.
\end{proof}


\vskip.3truecm
\noindent Enrico Boasso\par
\noindent E-mail address: enrico\_odisseo@yahoo.it 

\end{document}